\setlist[enumerate,1]{label = (\roman*)}
\newcommand{\hair}{\ifmmode\mskip1mu\else\kern0.08em\fi}
\renewcommand{\P}{\mathbb{P}}
\newcommand{\E}{\mathbb{E}}
\newcommand{\cP}{\mathcal{P}}
\newcommand{\F}{\mathcal{F}}
\newcommand{\R}{\mathbb{R}}
\newcommand{\N}{\mathbb{N}}
\newcommand{\Z}{\mathbb{Z}}
\newcommand{\Q}{\mathbb{Q}}
\newcommand{\one}{\mathbbm{1}}
\newcommand{\intint}[1]{\llbracket #1 \rrbracket}
\newcommand{\mc}{\mathcal}
\newcommand{\mf}{\mathfrak}
\newcommand{\srt}{\shortrightarrow}
\DeclareMathOperator{\diam}{diam}
\renewcommand{\S}{\mathcal S}
\newcommand{\A}{\mathcal A}
\newcommand{\h}{\mathfrak h}
\renewcommand{\epsilon}{\varepsilon}
\renewcommand{\emptyset}{\varnothing}
\newcommand{\rec}{\mathrm{rec}}
\newcommand{\wdf}{\mathcal{D}}
\newcommand{\NC}{\mathsf{NC}}
\newcommand{\cL}{\mathcal{L}}
\newcommand{\PT}{\mathsf{PT}}
\newcommand*\bigcdot{\mathpalette\bigcdot@{.45}}
\newcommand*\bigcdot@[2]{\mathbin{\vcenter{\hbox{\scalebox{#2}{$\m@th#1\bullet$}}}}}
\newtheorem{maintheorem}{Theorem}
\newtheorem{maincorollary}[maintheorem]{Corollary}
\newtheorem{theorem}{Theorem}[section]
\newtheorem*{theorem*}{Theorem}
\newtheorem*{proposition*}{Proposition}
\newtheorem{proposition}[theorem]{Proposition}
\newtheorem*{corollary*}{Corollary}
\newtheorem{corollary}[theorem]{Corollary}
\newtheorem{lemma}[theorem]{Lemma}
\theoremstyle{definition}
\newtheorem{definition}[theorem]{Definition}
\newtheorem*{notation}{Notation}
\newtheorem{remark}[theorem]{Remark}
\title[Comparisons of the Airy difference profile to Brownian local time]{Local and global comparisons of the  Airy difference profile to Brownian local time}
\author{Shirshendu Ganguly}
\address{Shirshendu Ganguly, Department of Statistics, U.C. Berkeley, Berkeley, CA, USA}
\email{sganguly@berkeley.edu}
\author{Milind Hegde}
\address{Milind Hegde, Department of Mathematics, U.C. Berkeley, Berkeley, CA, USA}
\email{milind.hegde@berkeley.edu}
\begin{document}

\begin{abstract}
There has recently been much activity within the Kardar-Parisi-Zhang universality class spurred by the construction of the canonical limiting object, the parabolic Airy sheet $\S:\R^2\to\R$ \cite{dauvergne2018directed}. The parabolic Airy sheet provides a coupling of parabolic Airy$_2$ processes---a universal limiting geodesic weight profile in planar last passage percolation models---and a natural goal is to understand this coupling. Geodesic geometry suggests that the difference of two parabolic Airy$_2$ processes, i.e., a difference profile, encodes important structural information. This difference profile $\wdf$, given by $\R\to\R:x\mapsto \S(1,x)-\S(-1,x)$, was first studied by Basu, Ganguly, and Hammond \cite{basu2019fractal}, who showed that it is monotone and almost everywhere constant, with its points of non-constancy forming a set of Hausdorff dimension $1/2$. 
Noticing that this is also the Hausdorff dimension of the zero set of Brownian motion, we adopt a different approach. Establishing previously inaccessible fractal structure of $\wdf$, we prove, on a global scale, that $\wdf$ is absolutely continuous on compact sets to Brownian local time (of rate four) in the sense of increments, which also yields the main result of \cite{basu2019fractal} as a simple corollary. Further, on a local scale, we explicitly obtain Brownian local time of rate four as a local limit of $\wdf$ at a point of increase, picked by a number of methods, including at a typical point sampled according to the distribution function $\wdf$. Our arguments rely on the representation of $\S$ in terms of a last passage problem through the parabolic Airy line ensemble and an understanding of geodesic geometry at deterministic and random times.
\end{abstract}

\maketitle

\setcounter{tocdepth}{1}
\tableofcontents

\section{Introduction and main result}
\label{s.intro}

The Kardar-Parisi-Zhang (KPZ) universality class refers to a broad family of models of one-dimensional random growth which are believed to exhibit certain common features, such as universal scaling exponents and limiting distributions. A plethora of models are believed to lie in this class, including asymmetric exclusion processes, first passage percolation, directed polymers in random environment, and the stochastic PDE known as the KPZ equation. Nonetheless, in spite of the breadth of models thought to lie in the class, only a handful have been rigorously shown to do so.

An important subclass of models believed to be members of the KPZ universality class is known as last passage percolation (LPP). While the microscopic details depend on the specification, in general terms they all consist of an environment of random noise through which \emph{directed} paths travel, accruing the integral of the noise along it---a quantity known as energy or weight. Typically, two endpoints in the environment are fixed, and a maximization is done over the weights of all paths with these endpoints. A path which achieves the \emph{maximum} weight is called a \emph{geodesic}, although, traditionally, the latter is used to denote \emph{shortest} paths in a metric space.

To facilitate our discussion here without getting into technical definitions, we imagine a limiting last passage percolation model defined on $\R\times[0,1]$, i.e., an infinite strip with height one; the height can be thought of as a \emph{time} parameter. (The purpose of this imagined model is to explain the interpretations of certain limiting objects we will introduce, and is not meant to be one which is rigorously defined.) The precise distribution of the noise is not important for our expository purposes, but it can be thought of as translationally invariant and independent. We consider paths $\gamma:[0,1]\to\R$, where $\gamma(t)$ denotes the position of the path at time $t$; the directedness constraint on paths is implemented by the requirement that they are functions, and so cannot ``backtrack'' and have multiple values at any height, i.e., time. The weight of a given path can be thought of as the integral of the noise along the path in some sense. The last passage value from $(y,0)$ to $(x,1)$ is the maximum weight over all paths $\gamma$ with $\gamma(0) = y$ and $\gamma(1) = x$.

A canonical limiting object in the KPZ universality class is known as the parabolic Airy$_2$ process, $\cP_1:\R\to\R$ \cite{prahofer2002PNG}. (The subscript of $1$ is due to $\cP_1$ being the first in a family of random curves, the parabolic Airy line ensemble, which plays a central role in this paper.) In terms of our last passage percolation model, $\cP_1(x)$ should be thought of as encoding the weight of the geodesic from $(0,0)$ to $(x,1)$. {The term ``parabolic'' is included in the name because $\cP_1$ is obtained by subtracting the parabola $x^2$ from the Airy$_2$ process, which is stationary.}

Note that while the endpoint $x$ is allowed to adopt any real value, the starting point is fixed to be zero in the interpretation of $\cP_1(x)$. In our LPP model, of course, any starting point is also allowed (i.e., $\gamma(0)$ can take any real value), but the joint distributions of geodesic weights with differing starting and ending points is not captured in the one-dimensional object $\cP_1$. 

There is a richer universal object that encodes this larger class of joint distributions, providing a joint coupling of the geodesic weights as the endpoints are allowed to vary arbitrarily, known as the parabolic Airy sheet $\S:\R^2\to\R$. While first conjectured to exist in \cite{corwin2015renormalization}, this was only recently proved in the important work \cite{dauvergne2018directed} (with assistance from \cite{dauvergne2018basic}). The value of the parabolic Airy sheet $\S(y,x)$ should be thought of as the weight of the geodesic from $(y,0)$ to $(x,1)$.

The parabolic Airy sheet admits invariance under appropriate scalings guided by certain critical exponents, much like Brownian motion. Hence, as in the case of the latter, $\S$ is expected to exhibit various fractal or self-similar properties.  

The inquiry into such aspects, including the existence of random fractal sets and their fractal dimensions, is indeed a well-established theme in probability theory and statistical mechanics. Particularly important examples include: (i)  the zero set of Brownian motion (which has an important connection to our methods that we will return to), (ii) the set of exceptional times in dynamical critical planar percolation where an infinite cluster is present, which is proven to have Hausdorff dimension $\frac{31}{36}$ \cite{garban2010fourier} on the honeycomb lattice (and conjectured to have the same on the Euclidean lattice), and (iii) the study of Schramm-Loewner evolutions in connection to scaling limits of interfaces at criticality in various statistical mechanics models; here too Hausdorff dimensions are known, in this case of the curves themselves \cite{rohde2005basic,beffara2008dimension}.

The above has led to the study of random fractal geometry within KPZ, which is still at a rather nascent stage, notwithstanding some important recent advances. A natural object to investigate to unearth fractal geometry in this setting is  the coupling, as defined by $\S$, of weights from different starting points.
The first such progress, which is closely related to our work, in this direction was made in  \cite{basu2019fractal}. 
To make things precise, let us first define the object of study.

Fix $y_a,y_b\in\R$ with $y_b > y_a$. We consider the random function $\wdf:\R\to\R$ given by 
\begin{equation}\label{e.W defn}
\wdf(x) = \S(y_b, x) - \S(y_a, x).
\end{equation}
We will call $\wdf$ a \emph{weight difference profile}; it is the difference in the weight of two geodesics with differing but fixed starting points and a  common ending point as the latter varies. A first fact about $\wdf$ sets the stage for our work, and indicates why it encodes useful information about the coupling between the weights from two sources.

\begin{lemma}\label{l.W increasing}
$\wdf$ is a continuous non-decreasing function almost surely.
\end{lemma}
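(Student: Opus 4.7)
Continuity of $\wdf$ is inherited directly from the continuity of $\S:\R^2\to\R$, established in \cite{dauvergne2018directed}, so the substantive content is the monotonicity. My plan is to reduce it to a single ``quadrangle'' (or Monge) inequality: for any $y_a<y_b$ and $x_1<x_2$,
\begin{equation}\label{e.monge-plan}
\S(y_a,x_1)+\S(y_b,x_2) \;\ge\; \S(y_a,x_2)+\S(y_b,x_1).
\end{equation}
Given \eqref{e.monge-plan}, rearranging yields $\wdf(x_2)\ge \wdf(x_1)$ for any fixed $x_1<x_2$; applying this on a countable dense set and using the already-established continuity of $\wdf$ upgrades the statement to a simultaneous almost-sure monotonicity on all of $\R$.

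To prove \eqref{e.monge-plan}, I would use the last passage interpretation of $\S$: the value $\S(y,x)$ is realized as the weight of the geodesic from $(y,0)$ to $(x,1)$ in the last passage problem through the parabolic Airy line ensemble (the same representation used in the body of the paper). Let $\pi_1$ be the geodesic from $(y_a,0)$ to $(x_2,1)$ and $\pi_2$ the geodesic from $(y_b,0)$ to $(x_1,1)$. Because the endpoints are in swapped order ($y_a<y_b$ while $x_1<x_2$), the two paths, viewed as continuous functions of the height coordinate, must meet at some intermediate time $t^{\ast}\in(0,1)$. Swapping the two paths at $t^{\ast}$ produces a path $\pi_1'$ from $(y_a,0)$ to $(x_1,1)$ and a path $\pi_2'$ from $(y_b,0)$ to $(x_2,1)$ with the same total weight as $\pi_1$ and $\pi_2$ together. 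Since $\pi_1'$ and $\pi_2'$ are admissible but not necessarily optimal, their weights are bounded by $\S(y_a,x_1)$ and $\S(y_b,x_2)$ respectively, giving \eqref{e.monge-plan}.

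The main obstacle in this plan is making the swap argument rigorous in the continuum, since $\S$ is not literally an LPP value in a nice noise field and one must work with the line-ensemble last passage representation, where paths are jump processes that interleave across the ensemble. Two clean routes circumvent this. One is to prove the inequality first in a prelimit LPP model---say Brownian LPP, where the swap is immediate and the Monge inequality holds deterministically---and then pass to the limit using the convergence of the prelimit Airy sheets to $\S$ in \cite{dauvergne2018directed}. The other is to carry out the crossing-and-swap directly within the Airy line ensemble picture; the geodesic-geometry machinery that the paper develops for deterministic and random times in later sections is more than enough to justify both the existence of a crossing time $t^{\ast}$ and the weight-preserving swap. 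Since the resulting inequality only needs to be proved for fixed $(y_a,y_b,x_1,x_2)$ before taking a countable union, no uniform estimates are required.
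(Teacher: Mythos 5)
Your proposal is correct and matches the paper's proof: the paper likewise reduces the monotonicity to the quadrangle inequality $\S(y_b,x_2)+\S(y_a,x_1)\ge \S(y_b,x_1)+\S(y_a,x_2)$, establishes it in prelimit Brownian LPP via the crossing/swap argument (Lemma~\ref{l.crossing}), and passes to the limit using the convergence of Brownian LPP to $\S$ from Theorem~\ref{t.airy sheet}. Your first proposed workaround is exactly this route, so there is no essential difference.
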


This has been proved several times in the literature, for example \cite[Lemma 9.1]{dauvergne2018directed} or \cite[Theorem 1.1(2)]{basu2019fractal}, but we will include the simple proof ahead in Section~\ref{s.geometry} for completeness. 
Lemma~\ref{l.W increasing} implies that at any given point $x$, $\wdf$ is either constant in a neighbourhood of $x$ or increasing at $x$ on at least one side. The latter is defined precisely as follows.

\begin{definition}\label{d.pt of increase}
A point $x\in\R$ is a \emph{non-constant} point of a continuous function $f:\R\to\R$ if there does not exist any $\epsilon>0$ such that $f(y) = f(x)$ for all $y\in (x-\epsilon ,x + \varepsilon)$. The set of non-constant points of $f$ is denoted by $\NC(f)$.
\end{definition}


It turns out that $\wdf$ is almost everywhere constant, with probability one. Further, it is an easy argument that the non-constant points of a continuous non-decreasing function must form a perfect set, i.e., be closed and have no isolated points. Perfect sets must necessarily be uncountable. In particular, $\wdf$ can be interpreted as the distribution function of a random measure supported on the uncountable set $\NC(\wdf)$. Canonical examples of a similar nature include the Cantor function or Brownian local time. Associated to any such set is its fractal dimension, which quantifies how ``sparse" the set is. With the dimension of the mentioned two examples being classically known, one is led to ask: what is the fractal dimension of $\NC(\wdf)$?

This question was originally raised and answered in \cite{basu2019fractal}, where it was shown that the Hausdorff dimension of $\NC(\wdf)$ is $\frac{1}{2}$ almost surely. (The definition of the Hausdorff dimension of a set is recalled ahead in Definition~\ref{d.hd} for the reader's convenience.)


It is well-known that the set of non-constant points of Brownian local time (to be denoted by $\cL$ henceforth), i.e, the set of zeroes of Brownian motion, also has Hausdorff dimension one-half. Inspired by a question of Manjunath Krishnapur about a possible connection between the latter and ${\mathsf{NC}}(\wdf)$, the present article adopts a different method to study $\wdf$. The main theorems establish comparison results between $\cL$ and $\wdf$, and further, as a  straightforward corollary of our methods, the main result of \cite{basu2019fractal} about the Hausdorff dimension of $\NC(\wdf)$ is quickly obtained (see Corollary~\ref{c.HD is one-half}).

%
Now, one can enquire about a comparison to $\cL$ globally as well as locally. We obtain results for both scales and start by discussing the former.
%

\subsection{A Brownian local time comparison on a global scale}\label{s.intro global scale}

Comparisons of objects arising in KPZ to Brownian counterparts are by now a well-established theme. One form of comparison typically involves absolute continuity of the relevant objects to each other. For example, for the parabolic Airy$_2$ process $\cP_1$, \cite{corwin2014brownian} proved that $x\mapsto \cP_1(x)-\cP_1(a)$ is absolutely continuous, as a process on a compact interval $[a,b]$, to Brownian motion; this was strengthened by getting bounds on a superpolynomial moment of the resulting Radon-Nikodym derivative in \cite{hammond2016brownian} (though with an affinely shifted version of $\cP_1$ being compared to Brownian bridge) and \cite{calvert2019brownian} (comparing the originally introduced increment to Brownian motion).

As we mentioned earlier, the process $\cP_1$ should be thought of as encoding the weight of geodesics with starting point fixed at $(0,0)$. One can also consider other initial conditions, which are parametrized by functions $\h_0:\R\to\R\cup\{-\infty\}$. The interpretation of $\h_0$ is that paths starting at $(y,0)$ are given an extra (though possibly negative) weight $\h_0(y)$ in addition to the usual weight they obtain on their journey through the environment. Maximizing this augmented weight over all paths with given endpoint $(x,1)$ (with unconstrained starting point) gives the \emph{KPZ fixed point} $\h_1$, which was first constructed in the breakthrough paper \cite{matetski2016kpz}. 

(Here the subscript of 1 in $\h_1$ indicates that we are at time 1, and we can analogously defined $\h_t$ by making the purely notational modifications in our imagined continuum LPP model to allow all times in $(0,\infty)$. We will discuss fractal properties of the process $t\mapsto \h_t$ ahead in our review of previous literature.)

Hammond used his results in \cite{hammond2016brownian} along with a detailed study of geodesic geometry in a model known as Brownian LPP to obtain a comparison of $\h_1$ with Brownian motion on a unit order, i.e., global, scale for quite a general class of initial data in \cite{hammond2017patchwork} (and aided by two further works, \cite{hammond2017modulus,brownianLPPtransversal}). These works all relied on the Brownian Gibbs property, an invariance under resampling enjoyed by the parabolic Airy line ensemble, which shall play a central role in this paper as well. We will introduce it slightly later in Section~\ref{s.intro.proof strategy}, and formally in Definition~\ref{d.bg}.

\cite{hammond2017patchwork}'s result on a unit-order comparison of $\h_1$ to Brownian motion roughly stated that any given compact interval $[a,b]$ can be decomposed into a random number of random subintervals (``\emph{patches}''), and that the restriction of $\h_1$ to each patch (a ``\emph{fabric function}'') enjoyed a comparison to Brownian bridge (later upgraded to Brownian motion in \cite{calvert2019brownian}) with a certain number of moments of the Radon-Nikodym derivative finite.
 More recently, \cite{sarkar2020brownian} obtained a comparison with a \emph{single} patch, but with no further regularity information than absolute continuity. Our main result is comparable to \cite{sarkar2020brownian} and proves an absolute continuity comparison without need for patches. We will say more about \cite{sarkar2020brownian} and their proof approach later in Sections~\ref{s.intro.proof strategy} and \ref{s.geometry}.

Let $\cL:[0,\infty)\to[0,\infty)$ be the local time at zero associated to Brownian motion of rate $\sigma^2$ begun at the origin (the definition of local time is recalled in Definition~\ref{d.local time}). We will always explicitly mention the relevant diffusivity $\sigma$, and so we will drop the dependence on the same in the notation~$\cL.$

We now come to the first main result of this paper.

\begin{maintheorem}\label{t.patchwork}
Let $[c,d] \subset \R$. Then $\wdf(\cdot) - \wdf(c)|_{[c,d]}$ is absolutely continuous to $\cL(\cdot) - \cL(1)|_{[1,1+d-c]}$, where $\cL$ is a Brownian local time of rate four.
\end{maintheorem}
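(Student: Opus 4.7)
The plan is to combine the representation of the parabolic Airy sheet $\S$ as a last passage value through the parabolic Airy line ensemble $\{\mathcal{L}_k\}_{k\ge 1}$ (due to \cite{dauvergne2018directed}) with the Brownian Gibbs property of that ensemble and L\'evy's identification of the running maximum of a Brownian motion with its local time at zero.

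The first step is geometric. For each source $y$ and endpoint $x$, I would decompose the geodesic from $(y,0)$ to $(x,1)$ according to the position $z=z(y,x)$ at which it meets the top line $\mathcal{L}_1$. Letting $R(y,z)$ denote the last-passage contribution from the lower lines needed to reach $z$ on $\mathcal{L}_1$ starting from $y$, one has
\[
\S(y,x) \;=\; \mathcal{L}_1(x) + \sup_{z\le x}\bigl[R(y,z) - \mathcal{L}_1(z)\bigr].
\]
Taking the difference, the $\mathcal{L}_1(x)$ term cancels, and
\[
\wdf(x) \;=\; M_b(x) - M_a(x), \qquad M_y(x) := \sup_{z\le x}\bigl[R(y,z) - \mathcal{L}_1(z)\bigr].
\]
Geodesic monotonicity gives $z(y_a,x)\le z(y_b,x)$, and combining this with the established monotonicity and coalescence of geodesics, I would reduce the increment $\wdf(x)-\wdf(c)$ for $x\in[c,d]$ (up to $\wdf$-null adjustments) to the running maximum on $[c,x]$ of a single \emph{competition process} $C(z)$ built from $\mathcal{L}_1$ and from the lower-line difference $R(y_b,\cdot)-R(y_a,\cdot)$.

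Next, I would apply the Brownian Gibbs property. Resampling $\mathcal{L}_1$ on a large window $[-M,M]\supset[c,d]$ conditional on $\mathcal{L}_1$ outside this window and on all the lower lines, the conditional law of $\mathcal{L}_1$ on $[-M,M]$ is a Brownian bridge of diffusivity $2$ conditioned to lie above $\mathcal{L}_2$; on the positive-probability event that this bridge avoids $\mathcal{L}_2$ the conditional law is absolutely continuous to an unconstrained Brownian bridge, and hence to Brownian motion on $[c,d]$. An analogous Gibbs resampling of $\mathcal{L}_2$ inside the support of $C$ contributes a further independent rate-$2$ Brownian fluctuation to $R(y_b,\cdot)-R(y_a,\cdot)$, so that on $[c,d]$ the competition process $C$ is absolutely continuous to a Brownian motion of rate $4$ plus a conditionally-deterministic drift. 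L\'evy's identity $(M-B,M)\stackrel{d}{=}(|B|,\cL)$, applied to this rate-$4$ Brownian motion, then converts the running-maximum representation of $\wdf(x)-\wdf(c)$ into absolute continuity to Brownian local time of rate $4$, yielding the statement.

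The principal obstacle is the geometric reduction in the first step: expressing the difference $M_b(x)-M_a(x)$ of two independently non-decreasing running maxima as a single running maximum. One has to argue that on any open subinterval of $[c,d]$ on which $\wdf$ is strictly increasing, the argmax of $M_b$ strictly advances while that of $M_a$ is stationary---equivalently, that the top-line entry points of the two competing geodesic families are separated precisely on $\NC(\wdf)$. Making this rigorous, and handling exceptional geodesic configurations via Hausdorff-measure bookkeeping, is likely to be the technical heart of the argument. A secondary subtlety is tracking the diffusivity correctly: the factor of $4$ (rather than $2$) arises by summing two independent rate-$2$ contributions, one from $\mathcal{L}_1$ and one from the lower-line difference $R(y_b,\cdot)-R(y_a,\cdot)$.
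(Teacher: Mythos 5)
Your plan correctly identifies the overall architecture --- represent $\S$ as a running maximum over a lower-line process minus $\cP_1$, apply the Brownian Gibbs property, invoke L\'evy's identity --- but it stalls at exactly the step you flag as the ``principal obstacle,'' and the way the paper resolves that obstacle is a device you have not found. You write $\wdf(x)=M_b(x)-M_a(x)$ with \emph{both} $M_a$ and $M_b$ nontrivial running maxima, and then propose to reduce this difference to a single running maximum by tracking which argmax advances; no such reduction is attempted in the paper, and it is not clear one can be made rigorous. Instead, the paper uses the stationarity $\S(\cdot+t,\cdot+t)\stackrel{d}{=}\S(\cdot,\cdot)$ to assume without loss of generality that $y_a=0$, and then uses the coupling property $\S(0,\cdot)=\cP_1(\cdot)$ from the definition of the Airy sheet. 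This makes $M_a$ vanish identically: $\S(y_a,x)$ is simply $\cP_1(x)$, which cancels the $\cP_1(x)$ prefactor in the representation of $\S(y_b,x)$, leaving $\wdf(x)$ as a \emph{single} running maximum (of $Z^{\lambda,b}_2(\cdot-\lambda)-\cP_1(\cdot)$, up to a $\vee\,\wdf(\lambda+1)$ correction). The entire ``competition process'' bookkeeping you anticipate as the technical heart of the argument is therefore bypassed; as written, your proposal has a genuine gap at that step.

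Two secondary points. First, the diffusivity-four accounting in the paper comes from $Z^{\lambda,b}_2-\cP_1$ being a difference of two objects each absolutely continuous to rate-two Brownian motion (the boundary-data process $Z^{\lambda,b}_2$ and the top line $\cP_1$); because $y_a=0$, the quantity you call $R(y_a,\cdot)$ is absent, so your description ``one from $\mathcal L_1$ and one from $R(y_b,\cdot)-R(y_a,\cdot)$'' is roughly the right count but the wrong decomposition. Second, there is a further subtlety you do not address: boundary-data LPP processes such as $Z^{\lambda,b}_2$ are only absolutely continuous to Brownian motion \emph{away from zero}, i.e.\ on $[\varepsilon,T]$ and not on $[0,T]$. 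The paper handles this by shifting $\lambda$ far to the left of $c$, so that with probability $\geq 1-\delta$ the running maximum on $[c,d]$ never sees the singular neighbourhood of $\lambda$; this step (via Lemma~\ref{l.strong non-degeneracy of E}) is essential and missing from your sketch.
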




We only consider an increment of $\wdf$ because it may take negative values, unlike $\cL$. Note also that the comparison is with an increment of $\cL$ with initial point $1$ and not $0$; this is because $\wdf$ will almost surely be constant in a neighbourhood of $c$, while $\cL$ almost surely increases at the origin while remaining constant around $1$.

The source of the rate of four in Theorem~\ref{t.patchwork} is that $\S$ (recall the definition from the discussion preceding \eqref{e.W defn}), roughly speaking, locally has diffusion rate two. This is simply the normalization adopted in the definition of the Airy$_2$ process to ensure that $\cP_1$, or equivalently $\S(0,\cdot)$, is obtained from the latter by simply subtracting off the parabola $x^2$ without any multiplicative factor. This local diffusivity rate is reflected in the fact that the distributional limit of $\varepsilon^{-1/2}(\S(0, x+\varepsilon t) - \S(0,x))$ is two-sided Brownian motion of rate two \cite{hagg2008local,quastel2013local,cator2015local}. The aforementioned Brownian Gibbs property of the yet-to-be-introduced parabolic Airy line ensemble also makes this evident (see Section~\ref{s.geometry} for further elaboration). 
Our proof will express $\wdf$ as the running maximum of a difference of two processes which enjoy a joint comparison to independent Brownian motions of rate two. The difference of these processes is like a rate four Brownian motion. We then use L\'evy's identity to relate the running maximum to $\cL.$

\begin{remark}
An earlier version of this paper posted to the arXiv only proved a comparison of $\wdf$ to $\cL$ on random patches (see Version 1 on arXiv for a precise statement). The proof presented here of the stronger full absolute continuity result was subsequently obtained by a slight modification of our previous arguments.  In the meantime, Dauvergne has also obtained a result quite similar to Theorem~\ref{t.patchwork} using related methods in \cite{dauvergne2021isometries}.
\end{remark}

Given Theorem~\ref{t.patchwork}, it is easy to identify the Hausdorff dimension of $\NC(\wdf)$ using that the support of $\cL$ almost surely has Hausdorff dimension one-half, along with the countable stability property of Hausdorff dimension.

\begin{maincorollary}[Theorem~1.1 of \cite{basu2019fractal}]\label{c.HD is one-half}
The Hausdorff dimension of $\NC(\wdf)$ is equal to one-half almost surely.
\end{maincorollary}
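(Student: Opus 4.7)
The plan is to reduce the computation of $\dim_H(\NC(\wdf))$ to that of $\dim_H(\NC(\cL))$ via Theorem~\ref{t.patchwork}, using the classical facts that $\NC(\cL)$ coincides with the zero set of the underlying Brownian motion---which has Hausdorff dimension $1/2$ almost surely---together with the countable stability of Hausdorff dimension, $\dim_H\!\left(\bigcup_n A_n\right) = \sup_n \dim_H(A_n)$.

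For the upper bound, fix any compact $[c,d] \subset \R$. Since $\NC(\cL) \cap [1, 1+d-c]$ is a subset of the zero set of a rate-four Brownian motion, it has Hausdorff dimension at most $1/2$ almost surely under the law of $\cL|_{[1, 1+d-c]}$. The absolute continuity in Theorem~\ref{t.patchwork} transfers this almost-sure event to $\wdf$, yielding $\dim_H(\NC(\wdf) \cap [c,d]) \leq 1/2$ almost surely. Covering $\R$ by $\bigcup_{n \in \N} [-n, n]$ and invoking countable stability gives $\dim_H(\NC(\wdf)) \leq 1/2$ almost surely.

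For the lower bound, I would invoke the dichotomy that, for any compact $[a,b] \subset (0, \infty)$, almost surely either the zero set of the Brownian motion has no intersection with $[a,b]$, or this intersection has Hausdorff dimension exactly $1/2$; this follows from Brownian scaling and the strong Markov property applied at the first zero of $B$ in $[a,b]$. Being almost-sure under $\cL$, this dichotomy transfers through Theorem~\ref{t.patchwork} to give: for each compact $[c,d]$, almost surely $\NC(\wdf) \cap [c,d]$ is empty or of Hausdorff dimension $1/2$. Since $\wdf$ is almost surely non-constant on $\R$ (using the parabolic asymptotics of the Airy sheet, $\S(y,x) \sim -(x-y)^2$ as $|x-y| \to \infty$, which produces a linear drift of positive slope $2(y_b-y_a)$ in $\wdf$, so that $\wdf(x) \to \pm\infty$ as $x \to \pm\infty$), there exists some integer $n$ with $\NC(\wdf) \cap [n, n+1] \neq \varnothing$, and the dichotomy then forces $\dim_H(\NC(\wdf) \cap [n, n+1]) = 1/2$, whence $\dim_H(\NC(\wdf)) \geq 1/2$ almost surely.

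The main subtle point is that Theorem~\ref{t.patchwork} provides absolute continuity only on compact intervals, so only almost-sure events (rather than merely positive-probability ones) can be transferred through it. Packaging the lower bound as the dichotomy above---which is itself an almost-sure statement---is precisely what sidesteps this issue; combined with the upper bound and the a.s.\ non-constancy of $\wdf$, the corollary follows.
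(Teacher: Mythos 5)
Your proposal is correct and follows essentially the same route as the paper's proof: transfer the almost-sure dichotomy (that the non-constant set of $\cL$ on a compact interval is either empty or has Hausdorff dimension $1/2$) through the absolute continuity of Theorem~\ref{t.patchwork}, combine with the almost-sure non-emptiness of $\NC(\wdf)$ (Lemma~\ref{l.strong non-degeneracy of E}, which you rederive via the parabolic decay of $\S$), and conclude with countable stability over an integer cover of $\R$. One small correction of emphasis: the reason only almost-sure statements transfer is not that Theorem~\ref{t.patchwork} is restricted to compact intervals but simply that absolute continuity is one-directional (null sets push forward to null sets, positive-probability events need not), which is exactly why the dichotomy is the right packaging.
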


However, to maintain the flow of exposition and properly recall these properties, we defer the proof of Corollary~\ref{c.HD is one-half} to Section~\ref{s.proof of HD theorem}. 


\subsection{Brownian local time in the local limit}

In contrast to the global scale result, our main result on the local scale explicitly obtains $\cL$ as a local limit. Before stating it precisely, we make a few remarks. 

We will be considering limits of the form $\varepsilon^{-1/2}(\wdf(\tau+\varepsilon t) - \wdf(\tau))$ where $\tau$ is a random time. First, note that $\tau$ indeed needs to be random: at a deterministic time, $\wdf$ is almost surely constant, and the local limit will be trivial. Further, for the same reason, $\tau$ needs to be almost surely a point of increase of $\wdf$ for there to be any hope of obtaining $\cL$ in the limit.

Now, there are a number of ways of choosing a point of increase of $\wdf$. For instance, we may fix $\lambda\in\R$ and consider the first point of increase $\tau_\lambda$ following $\lambda$. This would, in some sense, be a choice size-biased by the length of the flat portion preceding $\tau_\lambda$; also, it excludes any of the ``interior'' points of increase of $\wdf$ from being considered. Alternately, we may consider the first time $\rho^h$ that $\wdf$ hits a given level $h\in\R$, or the first time $\rho_c^h$ that $\wdf$ hits $\wdf(c) + h$ for given $h> 0$ and a fixed $c$; intuitively (and as we will prove), these are points of increase of $\wdf$ and will typically be ``interior'' ones. A fourth method of choice, which addresses the size-biasing issue, could be to choose, for an interval $[c,d]$, a point $\smash{\xi_{[c,d]}}$ \emph{uniformly} from all the non-constant points of $\wdf$ on $[c,d]$, by sampling from the probability measure on $[c,d]$ with distribution function $(\wdf(\cdot) - \wdf(c))/(\wdf(d)-\wdf(c))$, conditionally on the event that this is a non-zero measure, which we will show has positive probability in Lemma~\ref{r.positive prob of non-empty}. (Indeed, distributions at random times sampled according to such local times have been studied, for example, in  the context of dynamical critical percolation \cite{hammond2015local}.)

We prove that the local limit at any of these four types of random times is Brownian local time:

\begin{maintheorem}\label{t.local limit}
Let $\tau$ be equal to either $\tau_\lambda$, $\rho^h$, $\rho_c^h$, or $\xi_{[c,d]}$ (the last conditionally on $\NC(\wdf)\cap[c,d]\neq\emptyset$) as above.  Then,
$$\lim_{\varepsilon\to 0} \varepsilon^{-1/2}\bigl(\wdf(\tau + \varepsilon t) - \wdf(\tau)\bigr) = \cL(t),$$
where, as before, $\cL:[0,\infty)\to [0,\infty)$ is the local time at the origin of rate four Brownian motion, and the limit is in distribution in the topology of uniform convergence on compact sets of $[0,\infty)$.
\end{maintheorem}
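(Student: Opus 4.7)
The plan is to leverage the representation of $\wdf$ developed in the proof of Theorem~\ref{t.patchwork}---as a running maximum of a process $Z$ that is absolutely continuous to a rate-four Brownian motion---and then appeal to L\'evy's identity. At any non-constant point $\tau\in\NC(\wdf)$, the running-max structure forces $Z(\tau)=\wdf(\tau)$, so for small $s\geq 0$,
\[
\wdf(\tau+s)-\wdf(\tau)=\max\Bigl(0,\,\max_{u\in[0,s]}\bigl(Z(\tau+u)-Z(\tau)\bigr)\Bigr).
\]
After rescaling by $\eps^{-1/2}$, the continuous mapping theorem applied to the running-maximum functional on $C([0,t])$, together with L\'evy's identity---$(\max_{u\leq r}B(u))_{r\geq0}\stackrel{d}{=}(\cL(r))_{r\geq0}$ for a rate-four BM $B$ with local time $\cL$ at zero---reduces the theorem to the claim that
\[
\tilde Z_\eps(u)\,:=\,\eps^{-1/2}\bigl(Z(\tau+\eps u)-Z(\tau)\bigr)\,\longrightarrow\,B(u)
\]
in distribution uniformly on compacts of $[0,\infty)$, for each of the four choices of $\tau$.

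The random times $\tau_\lambda$, $\rho^h$, and $\rho^h_c$ are first-passage times of $Z$---to its running maximum at $\lambda$, to level $h$, and to $\wdf(c)+h$, respectively---and so are stopping times for a natural filtration; for these, a strong-Markov-style resampling argument exploiting the Brownian Gibbs property of the parabolic Airy line ensemble reduces the post-$\tau$ fluctuation of $Z$ to the known local Brownian limit of the parabolic Airy sheet at a deterministic space point \cite{hagg2008local,quastel2013local,cator2015local}. The time $\xi_{[c,d]}$ is sampled from the normalized distribution function $\wdf$ on $[c,d]$, and I would handle it by disintegration: conditional on $\wdf$, its law is $\mathrm{d}\wdf/(\wdf(d)-\wdf(c))$, so the expectation of a continuous bounded test functional applied to the process $u\mapsto\eps^{-1/2}(\wdf(\xi_{[c,d]}+\eps u)-\wdf(\xi_{[c,d]}))$ averages deterministic-point local limits against $\mathrm{d}\wdf$, and Theorem~\ref{t.patchwork} combined with Fubini identifies this average with that of the analogous expression for $\cL$, which is $\cL$ itself by the strong Markov property of the driving BM.

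The principal obstacle will be executing the strong-Markov-style reduction for the hitting times in the absence of a genuine Markov structure for $Z$. The remedy is a resampling of the parabolic Airy line ensemble on a short interval past $\tau$ conditioned on its exterior data: the Brownian Gibbs property renders this conditional law a Brownian bridge subject to a non-crossing constraint with the lower curves, and a quantitative comparison to unconstrained Brownian motion---drawing on the absolute continuity framework built for Theorem~\ref{t.patchwork} together with standard tightness for the ensemble---will produce the sought-for rate-four Brownian local limit of $\tilde Z_\eps$. Combined with the running-max reduction and L\'evy's identity above, this completes the proof.
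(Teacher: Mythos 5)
Your high-level plan---express the increment of $\wdf$ as a running maximum and invoke L\'evy's identity via the continuous mapping theorem, reducing matters to a local Brownian limit of a driving process at the random time $\tau$---is in the same spirit as the paper's, and your Fubini-style handling of $\xi_{[c,d]}$ (disintegrating it as a mixture of hitting times) is essentially what the paper does. But there is a genuine gap in the central step.

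You propose to show the local limit $\eps^{-1/2}(Z(\tau+\eps u)-Z(\tau))\to B(u)$ for $Z=Z^{\lambda,b}_2-\cP^\lambda_1$ by resampling $\cP$ past $\tau$ via Brownian Gibbs. This does not work directly. The post-$\tau$ fluctuation of $Z^{\lambda,b}_2$ is not a function only of the exterior data that strong Brownian Gibbs conditions on: by \eqref{e.Z_i^lambda defn}, $Z^{\lambda,b}_2(\tau+u)$ is a last passage value from the vertical line at $\lambda$, whose implicit geodesic may sit on a line of index $\geq 3$ at $\lambda+\tau$ and so depends on $\cP_j$ on all of $[\lambda,\lambda+\tau+u]$ for $j\geq 3$, plus the boundary data $\{b^\lambda_j\}$. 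Resampling $\cP_1,\cP_2$ on a short interval past $\tau$ does not isolate $Z$. Moreover, absolute continuity to Brownian motion on $[\eps,T]$ (Proposition~\ref{p.Br abs cont of Z}) licenses the local limit at a \emph{fixed} point via Lemma~\ref{l.local limit BM}, not at a \emph{random hitting time}; your proposal does not supply the additional structure needed to transfer the conclusion, and the process $Z$ is not itself Markov. The paper explicitly flags this route as ``plausible'' but does not carry it out, and its actual argument sidesteps it by first showing (Lemma~\ref{l.point of inc rep}, whose engine is Proposition~\ref{p.Z_1 Z_3 never coincide}, proven via the fact that planar Brownian motion avoids a given point) that near a point of increase $\tau$, the driving process can be replaced by $\cP^\tau_2-\cP^\tau_1$: i.e.\ the implicit geodesic jumps to line $2$ exactly, not lower, because $Z^{\lambda,b}_1(\tau-\lambda)>Z^{\lambda,b}_3(\tau-\lambda)$ a.s.\ simultaneously for all $\tau$. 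Only after this replacement can one apply the strong Brownian Gibbs property---at the stopping domain $[\tau,\tau+2]$, with the right-continuous extension of Remark~\ref{r.right continuity}---to get joint absolute continuity of $(\cP_1,\cP_2)$ to Brownian motion \emph{including at the origin}, which is what Corollary~\ref{c.local limit local time} needs.

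Two smaller issues: your appeal to the ``known local Brownian limit of the parabolic Airy sheet at a deterministic space point'' from \cite{hagg2008local,quastel2013local,cator2015local} is misdirected---at a deterministic point $\wdf$ is a.s.\ locally constant, so its local limit is zero, and those references concern the Airy$_2$ process, not $\wdf$---and you take for granted that all four random times are a.s.\ points of increase and that $\NC(\wdf)\cap[c,d]\neq\emptyset$ with positive probability, both of which require separate proofs (Lemmas~\ref{l.random times are points of increase} and~\ref{r.positive prob of non-empty} in the paper).
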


For completeness, we will also prove that each of the mentioned random points are almost surely points of increase of $\wdf$, and, as mentioned, that $\NC(\wdf)\cap[c,d] \neq \emptyset$ with positive probability.

Several recent papers have proved an analogous statement that the local limit of the KPZ fixed point at a fixed location is two-sided Brownian motion.
In the case of the parabolic Airy$_2$ process, this was first done in various senses in \cite{hagg2008local,cator2015local,quastel2013local}. Under general initial conditions, this is proven in the sense of convergence of finite dimensional distributions in \cite[Theorem~4.14]{matetski2016kpz}. In the stronger topology of uniform convergence on compact sets, the convergence is implied by combining the statement of absolute continuity to Brownian motion on compact intervals (\cite[Theorem~1.2]{sarkar2020brownian}) with a statement that a local limit of a process which is absolutely continuous to Brownian motion is itself Brownian motion (see \cite[Lemma~4.3]{dauvergne2020three}, also cited here as Lemma~\ref{l.local limit BM}).



We mention another interesting recent work investigating a local limit, though in a slightly different sense. \cite{dauvergne2020three} considers the local limit of the environment around the geodesic, in the continuum LPP model where the environment is given by the \emph{directed landscape}, constructed in \cite{dauvergne2018directed}, which is a richer scaling limit than $\S$ and, in terms of our earlier imagined continuum LPP model, encodes the joint distribution of LPP weights when even the starting and ending heights, fixed to be $0$ and $1$ for $\S$, are allowed to adopt any values $s<t$. \cite{dauvergne2020three} shows that the local limit of the environment around an interior point of the geodesic in this random environment as well as the local limits of the geodesic and its weight can be described in terms of an LPP problem driven by $\S$ and boundary data involving classical objects such as two-sided Brownian motion and the Bessel-3 process.

(It is this continuum model of LPP defined by the directed landscape which is a rigorously realized version of the continuum model we have been using for expository purposes, in the sense that limiting objects such as $\cP_1$ and $\h_t$ are as described above with the directed landscape LPP model. However, the environment defined by the directed landscape does not consist of independent noise. A continuum LPP model with some sort of independent noise whose weights are encoded by the directed landscape, i.e., a zero temperature analogue to the continuum directed random polymer \cite{alberts2014continuum} whose free energy is encoded by the KPZ equation, has not yet been constructed.)

\subsection{Prior work on fractal aspects of KPZ}
As has been mentioned, \cite{basu2019fractal} initiated the study of fractal geometry within KPZ by identifying the Hausdorff dimension of the set $\NC:=\NC(\wdf)$. At a high level,  the argument for the upper bound on the Hausdorff dimension relied on showing that $\NC$ is a subset of the set of points $x$ admitting geodesics from $(y_a, 0)$ and $(y_b,0)$ to $(x,1)$ that are disjoint except for the common endpoint. We will elaborate on this slightly when we contrast our proof strategy with that of \cite{basu2019fractal} in Section~\ref{s.comparison to bgh}.

An exact form of this correspondence between geodesic disjointness and the non-constant points of $\wdf$, i.e., that the sets are equal, was proved shortly afterwards in \cite{bates2019hausdorff}. Additionally, \cite{bates2019hausdorff} identified the Hausdorff dimension of the set of points $(y,x)$, as a subset of $\R^2$, such that there are at least two disjoint geodesics (except for the common start and endpoints) from $(y,0)$ to $(x,1)$. This set's dimension was also shown to be one-half. In both these results of \cite{bates2019hausdorff} the geodesics are defined in terms of the directed landscape, similar to the earlier mentioned \cite{dauvergne2020three}. 

There are two further recent studies of fractal dimension within KPZ, \cite{KPZfixedptHD} and \cite{das2021law}. 

In the first \cite{KPZfixedptHD}, instead of studying the parabolic Airy sheet or directed landscape directly, the KPZ fixed point is studied. Recall its definition as a process in $t$ from Section~\ref{s.intro global scale}.
\cite{KPZfixedptHD} identifies as $\frac{2}{3}$ the Hausdorff dimension of the set of exceptional times $t>0$ where $\h_t$ has multiple maximizers, for a broad class of initial data $\h_0$, conditionally on the set of exceptional times being non-empty (an event which is shown to have positive probability, and conjectured to be an almost sure event). In the geodesic picture, these exceptional times are exactly when there is not a unique geodesic with initial condition given by $\h_0$ and unconstrained ending point.

The second \cite{das2021law} investigates the upper and lower laws of iterated logarithm (LIL) in time, at a fixed spatial location, for the solution to the KPZ equation (a canonical stochastic PDE in the KPZ universality class) started from narrow-wedge initial condition. They show that the upper LIL occurs at scale $\smash{(\log\log t)^{2/3}}$ and the lower at scale $\smash{(\log\log t)^{1/3}}$. This agrees with the scales of laws of iterated logarithm proved previously in prelimiting LPP models in \cite{ledoux2018law,basu2019lower}. The result of \cite{das2021law} of relevance from the point of view of fractal geometry is their further study of the level sets, parametrized by $\alpha$, i.e.,  times $t$ where the solution exceeds $\alpha(\log\log t)^{2/3}$ and their Hausdorff dimensions. They also establish an interesting transition from mono-fractal to multi-fractal behaviour under an appropriate exponential time change.  

{Finally, in a recent preprint \cite{sep2021busemann}, the authors announce that in forthcoming work they will prove that, in the model of Brownian LPP, the set of points $(m,t)$ in the semi-discrete plane for which there is a random direction $\theta$ such that there exist two semi-infinite geodesics (which will be disjoint) in direction $\theta$ starting from $(m,t)$ has Hausdorff dimension $\frac{1}{2}$.}

\subsection{Remarks on proof strategy and organization of the paper}
\label{s.intro.proof strategy}
In this section we sketch the basic ideas behind Theorems~\ref{t.patchwork} and \ref{t.local limit}. Corollary~\ref{c.HD is one-half} is a straightforward consequence of Theorem~\ref{t.patchwork} and the fact that the support of $\cL$ almost surely has Hausdorff dimension one-half. The approach we take relies on what may be called a continuous Robinson-Schensted-Knuth correspondence which \cite{dauvergne2018directed} used to define the parabolic Airy sheet $\S$.


\subsubsection{The parabolic Airy sheet via continuous RSK}
\cite{dauvergne2018directed} defined the parabolic Airy sheet $\S$ in terms of a limiting semi-discrete LPP problem in an environment defined by the parabolic Airy line ensemble $\cP$. The latter is an infinite $\N$-indexed collection of random non-intersecting curves whose lowest indexed, but highest in value, curve is $\cP_1$, the parabolically shifted Airy$_2$ process; see Figure~\ref{f.airy line ensemble}. 

\begin{figure}[h]
\centering
\includegraphics[width=0.35\textwidth]{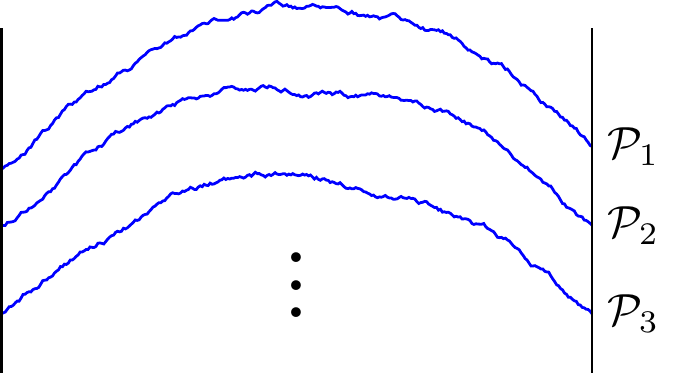}
\caption{A depiction of the parabolic Airy line ensemble.}\label{f.airy line ensemble}
\end{figure}

The above mentioned last passage problem in $\cP$  can be described by fixing a starting coordinate on the $k$\textsuperscript{th} line (for some $k\ge 1$) and an ending coordinate on the first (i.e., top) line, and considering up-right paths between the two. The weight of a given path is given by the sum (over $i$) of increments of values of $\cP_i$ along the interval that the path spends on the $i$\textsuperscript{th} line. If the starting point is $(y,k)$ and ending point is $(x,1)$, we denote this by $\cP[(y,k)\to(x,1)]$.

It was shown in \cite{dauvergne2018directed} that the set of last passage values obtained this way encodes $\S$. At a high level, the increments of $\S$ with a fixed starting point, i.e., of the form $\S(y,x) - \S(y,z)$, are defined as the limit of a difference $\cP[((y)_k, k)\to(x,1)] - \cP[((y)_k, k)\to(z,1)$ of LPP values in $\cP$. Here $\{(y)_k\}_{k\in\N}$ is a sequence of deterministic points, depending on $y$, which is such that $(y)_k\to-\infty$ in a parabolically curved manner as $k\to\infty$; see Definition~\ref{d.airy sheet}. Similar formulations have recently been used to construct the extended directed landscape in \cite{dauvergne2021disjoint}.

Note that here the increment of $\S$ is with shared starting point and differing ending points, while our object of interest $\wdf(x) = \S(y_b,x) - \S(y_a,x)$ has differing starting point but shared ending point. Thus the description of the former type of increment of $\S$ in terms of a difference of LPP values in $\cP$ is not directly useful for us; we need a LPP description that holds for $\S(y,x)$ itself. This is rather difficult and in fact an open problem (\cite[Conjecture~14.2]{dauvergne2018directed}). 

This issue was encountered by \cite{sarkar2020brownian} as well, who bypassed it by considering an LPP problem in $\cP$  with appropriate \emph{boundary data} which encodes $\S(y,x).$

\subsubsection{LPP with boundary data}\label{s.intro.boundary data}
\cite[Lemma 3.10]{sarkar2020brownian} (and cited here as Lemma~\ref{l.Airy sheet to boundary LPP problem}) says that, for given $\lambda\in\R$, there exist random numbers $\{b^\lambda_i\}_{i\in\N}$ such that, for $x\geq 0$,
\begin{equation}\label{e.boundary problem first mention}
\begin{split}
\S(y_b,\lambda+x) &= \sup_{i\in\N} \left\{b^\lambda_i + \cP[(\lambda, i) \to (\lambda+x,1)]\right\}.
\end{split}
\end{equation}
Thus the RHS is the last passage value in $\cP$ from the set $\{\lambda\}\times \N$ to $(\lambda+x,1)$ with boundary values $\{b^\lambda_i\}_{i\in\N}$, and is what we will refer to as an LPP problem with boundary data.
In this case, the boundary data $\smash{\{b^\lambda_i\}_{i\in\N}}$ implicitly encodes the weight contribution coming from the initial segments (i.e., up to $\lambda$) of the geodesics between the points $((y)_k, k)$ and $(\lambda+x,1)$ as $k\to \infty$ when $y=y_b$.

In fact, \cite{sarkar2020brownian} interprets $b^{\lambda}_i$ as the weight of an \emph{infinite} geodesic formed from the geodesics from $((y)_k, k)$ to $(x,i)$ as $k\to\infty$, where the meaning of the infinite geodesic's weight must be reformulated with proper centering since $\cP[((y)_k, k) \to (x,i)]$ will diverge as $k\to\infty$. While we will make use of this infinite geodesic as intuition in our discussion of our proof strategy in this section, and discuss it in more detail in Section~\ref{s.geometry}, the actual arguments will not rely on this except for the use of one statement from \cite{sarkar2020brownian}, recorded as Lemma~\ref{l.Airy sheet to boundary LPP problem}.

An important point of convenience is that we do not need to consider a representation like \eqref{e.boundary problem first mention} for $\S(y_a,x)$. Indeed, $\S$ enjoys the stationarity property that $\S(\cdot + t, \cdot+t) = \S(\cdot, \cdot)$ for any fixed $t\in\R$ as a process on $\R^2$. This allows us to assume without loss of generality that $y_a = 0$. This is a key choice because it is proved in \cite{dauvergne2018directed} that the coupling between $\S$ and $\cP$ also has the property that $\S(0, x) = \cP_1(x)$ for all $x\in\R$: we obtain a direct representation of $\S(y_a, x)$ in terms of $\cP$ for free. For our arguments, it would thus be particularly convenient and cause no loss in generality to assume $y_a = 0$.

We will need to know that expressions such as the right hand side of \eqref{e.boundary problem first mention} enjoy absolutely continuity properties with respect to Brownian motion. Before we outline in a few subsections ahead how to make such assertions, let us mention the important fact about $\cP$ that is the source of all Brownian comparisons we make, namely that it enjoys the \emph{Brownian Gibbs} property \cite{corwin2014brownian}. This means that conditional on everything apart from the top $k$ curves of $\cP$ on an interval $[a,b]$, the distribution of $(\cP_1, \ldots, \cP_k)$ on $[a,b]$ is given by $k$ independent rate two Brownian bridges between the correct endpoints which are conditioned to not intersect {each other or the lower curve $\cP_{k+1}$}. An immediate implication is that the joint law of $\cP_i(\cdot) - \cP_i(a)$, for $i=1, \ldots, k$, is absolutely continuous with respect to the law of $k$ independent rate two Brownian motions on $[a,b]$. The formal definition of the Brownian Gibbs property is given in Definition~\ref{d.bg}.


Before proceeding further it would be convenient to setup some notation.  Given $\cP$,
for $i<j$ and $\lambda \in \R$, let
\begin{equation}\label{comnot}
\cP^\lambda_{j \shortrightarrow i}(x) : = \cP[(\lambda,j) \to (\lambda+x, i)].
\end{equation}
For our applications we will often be using the case that $i=1$. Note also that when $i=j$, $\cP^\lambda_{i\srt i}(x)=\cP_i(\lambda+x)-\cP_i(\lambda)$ measures the increment of the $i$\textsuperscript{th} line $\cP_i$ across the interval $[\lambda, \lambda+x]${; for ease of notation, we will often denote $\smash{\cP^\lambda_{i\srt i}}$ by simply $\smash{\cP^\lambda_i}$.}


\subsubsection{A representation for $\wdf$ in a special case.} \label{ptdiscuss}
To explain how we will relate $\wdf$ to Brownian local time, let us first consider a special case. 
%
%
We consider the case that the maximum in \eqref{e.boundary problem first mention} is achieved at $j=2$. Then for $\lambda\in\R$ fixed and all $x\geq 0$, 
\begin{equation}\label{e.wdf special case representation}
\wdf(\lambda+x) = \S(y_b, \lambda+x) - \S(y_a, \lambda+x) = b^\lambda_2 + \cP^\lambda_{2\srt 1}(x) -\cP_1(\lambda+x).
\end{equation}
Observe that, by our choice of $y_a=0$, we have $\S(y_a,\lambda+x) = \cP_1(\lambda+x)$. In particular, there is \emph{no boundary data corresponding to $y_a$}. 
%

Now, it is easy to see that
\begin{equation}\label{e.2-LPP}
\cP^\lambda_{2\shortrightarrow 1}(x) = \cP[(\lambda,2) \to (\lambda+x, 1)] = \cP^\lambda_{1}(x)  + \max_{0\leq s\leq x}\left(\cP^\lambda_{2}(s) - \cP^\lambda_{1}(s)\right).
\end{equation}
%
The important feature of this formula is that it involves the \emph{maximum} of the difference of $\cP_2$ and $\cP_1$. Applying \eqref{e.2-LPP} to \eqref{e.wdf special case representation}, recalling that $\cP_1^\lambda(x) = \cP_1(\lambda+x) - \cP_1(\lambda)$, and observing from \eqref{e.wdf special case representation} that $\wdf(\lambda) = b_2^\lambda - \cP_1(\lambda)$, we see that
\begin{equation}\label{e.easy two case}
\wdf(\lambda+x) - \wdf(\lambda) = \max_{0\leq s\leq x}\left(\cP^\lambda_{2}(s) - \cP^\lambda_{1}(s)\right).
\end{equation}
By the Brownian Gibbs property, $(\cP^\lambda_{1}, \cP^\lambda_{2})$ is absolutely continuous to a pair of independent Brownian motions of rate two, and so \eqref{e.easy two case} is absolutely continuous, \emph{on the whole interval} $x\in[0,T]$ for any fixed $T$, to the running maximum of rate four Brownian motion. By L\'evy's famous identity, the latter has exactly the distribution of the local time at the origin of rate four Brownian motion (see Proposition~\ref{p.levy identity} to recall this).


Of course, the above nice representation only holds when the maximizing index in \eqref{e.boundary problem first mention} is $j=2$, which can be thought of as the case where the infinite geodesic to $(\lambda+x,1)$ is at line $2$ at $\lambda$. In the general case, the geodesic may be at any line at $\lambda$. The central idea to handle the general case is the following. For the geodesic to reach $(\lambda+x,1)$, it must enter the second line at some point. If we can encode the weight accumulated by the geodesic up until this point in a new process, then $\wdf(\lambda+x)-\wdf(\lambda)$ will have a representation analogous to \eqref{e.easy two case} with this new process in place of $\cP^\lambda_2$.

Essentially, we are looking for a \emph{boundary data process} up till the second line, which we also need to be Brownian-like in a suitable sense. As this process plays an important role in our arguments, we turn to describing it next.

\subsubsection{A boundary data process}



To define a boundary data process up to the $i$\textsuperscript{th} line, it is not possible to simply use the definition of $b^\lambda_i$ as a process in $\lambda$. This is because the latter is defined in \cite{sarkar2020brownian} for only a fixed $\lambda\in\R$, but we will need to know properties such as the continuity of the process in $\lambda$, which is not obvious from the definition  of $b^{\lambda}_i$ even if it could be extended from its definition for fixed $\lambda$. To emphasize this distinction, we call the boundary data process $\smash{Z^{\lambda, b}_i}$, where $\smash{Z^{\lambda, b}_i(x)}$ should be thought of as $\smash{b^{\lambda+x}_i}$.

To define $Z^{\lambda, b}_i(x)$, we consider the LPP problem from the vertical line at $\lambda$, with boundary data $\smash{\{b^\lambda_j\}_{j\geq i}}$, to $(\lambda+x, i)$. In other words, $\smash{Z^{\lambda, b}_i}$ is the righthand side of the second equality of \eqref{e.boundary problem first mention} with $(\lambda+x,i)$ in place of $(\lambda+x,1)$, i.e.,
\begin{equation}\label{e.Z defn in intro}
Z^{\lambda,b}_i(x) = \sup_{j\geq i}\left\{b^{\lambda}_j + \cP^\lambda_{j\to i}(x)\right\}.
\end{equation}
With $Z^{\lambda,b}_i$ defined, it should be plausible that we can write $\S(y_b,\lambda+x)$ as an LPP problem with boundary data driven by the functions $\smash{\{Z^{\lambda,b}_i\}_{i\in\N}}$ analogous to \eqref{e.boundary problem first mention}. That is, we have that for any $0\leq y\leq x,$
\begin{equation}\label{e.S with Z boundary data intro}
\S(y_b,\lambda+x) = \sup_{i\in\N} \left\{Z^{\lambda,b}_i(y) + \cP[(y,i) \to (x,1)]\right\}.
\end{equation}
The maximizing index $i$ (which is a.s. finite) in the supremum can be thought of as the index of the horizontal line which the infinite geodesic (corresponding to the starting point $y_b$ in $\S$) with endpoint $(\lambda+x,1)$  is present on at the vertical line $\lambda+y$.

Now while the above equation has boundary data on the left, i.e., on the vertical line $\lambda+y$, one can also write down a representation where we make use of $Z^{\lambda, b}_i(y)$ as a process in $y$ so that we have boundary data on the bottom instead of to the left. Indeed, since $\S$ can be heuristically thought of as the LPP weight to the top line in the environment given by $\cP$, while $Z^{\lambda,b}_2=Z^{\lambda,b}_2(\cdot)$ is the LPP weight to the second line, this representation would give $\S(y_b,x)$ as an LPP problem across the two-line environment  $(\smash{\cP^\lambda_1, Z^{\lambda,b}_2})$ on $[0,\infty)$. 

A moment's thought, however, reveals a slight complication: the infinite geodesic could be below line 1 or \emph{at} line 1 at the vertical line $\lambda$. If it is below $1$, then it will enter the second line at a coordinate in $[\lambda,\infty)$, and its contribution to $\S(y_b, \lambda+x)$ will be captured by $Z^{\lambda,b}_2$. But if it is at line $1$ at $\lambda$, then it has entered and exited line $2$ before $\lambda$, and so $\smash{Z^{\lambda,b}_2}$ on $[0, \infty)$ will not be able to capture its contribution. In the latter case we will need an extra term involving the boundary data at $(\lambda,1)$, namely $b^\lambda_1$. The above can now be encoded in the following, somewhat technical looking, formula
$$\S(y_b, \lambda+x) = \sup_{0\leq s\leq x}\left(Z^{\lambda, b}_2(s) + \cP_1^\lambda(x) - \cP_1^\lambda(s)\right)\vee \left(b^\lambda_1 + \cP_1^\lambda(x)\right)$$
The second term is for the case of entry of infinite geodesic at line 1 at vertical line $\lambda$, and the first term for entry of the infinite geodesic into line 2 at or after  $\lambda$. Note that, at $x=0$, the correct expression is $\S(y_b, \lambda) = b^\lambda_1$, i.e., the second term in the previous display; which is not surprising since the infinite geodesic ending at $(\lambda,1)$ is at the top line at $\lambda$ by definition.

With this representation we can obtain a formula for $\wdf(\lambda+x)$ which is similar to \eqref{e.easy two case} obtained in the special case (though note that this formula is for $\wdf$ itself and not an increment of it): assuming for simplicity that the infinite geodesic to $(\lambda+x,1)$ enters the second line on or after $\lambda$,
%
\begin{equation}\label{e.wdf rep in intro}
\wdf(\lambda+x) = \max_{0\leq s\leq x}\left(Z^{\lambda,b}_2(s) - \cP_1^\lambda(s)\right) - \cP_1(\lambda).
\end{equation}
Of course, our statement is for an increment of $\wdf$, as, for example, $\wdf(\lambda+x)$ may be negative and so not comparable to Brownian local time. However, let us first proceed assuming \eqref{e.wdf rep in intro} is the equality we work with, in order to see some of the proof ingredients. Minor annoyances like the dangling term of $\cP_1(\lambda)$ will also disappear when the increment is taken.

Now, if we knew that $(\cP^\lambda_1, Z^{\lambda,b}_2)$ were absolutely continuous to independent rate two Brownian motions on an interval $[0,T]$ for some $T > 0$, then the difference inside the maximum on the righthand side of \eqref{e.wdf rep in intro}, as a process on $[0,T]$, would be comparable to a rate four Brownian motion (except for a minor issue that the difference does not equal zero at $s=0$; this issue will be handled when we consider the increment) and we would be done by the same reasoning as in the special case above.
However, in this case, we cannot obtain absolute continuity to independent Brownian motions on the entire interval $[0,T]$  for $\smash{(\cP^\lambda_1, Z^{\lambda,b}_2)}$, unlike $\smash{(\cP^\lambda_1, \cP^\lambda_{2})}$. Instead, we will need to exclude a neighbourhood of zero, the reason for which will be indicated at the end of this subsection.

To see why absolute continuity of Brownian motion is expected, recall that we already know from the Brownian Gibbs property that, for any $k\in\N$, the top $k$ lines of $\cP$ are jointly absolutely continuous to $k$ independent Brownian motions on any compact interval. Given this and the definition~\eqref{e.Z defn in intro} of $\smash{Z^{\lambda,b}_i}$, a similar comparison between $\smash{Z^{\lambda,b}_i}$ and Brownian motion would be accomplished by knowing two things: first, that there almost surely exists a finite maximizing index for the RHS of \eqref{e.Z defn in intro}, and second, that LPP problems with arbitrary boundary data in environments of a finite number of independent Brownian motions enjoy an absolute continuity comparison to Brownian motion. 

Versions of these facts were proven in \cite{sarkar2020brownian} and form a starting point for our arguments. We mention an important detail of the second fact concerning absolute continuity of general boundary condition Brownian LPP problems to Brownian motion, namely that the comparison does \emph{not} hold on $[0,T]$, but only on $[\varepsilon, T]$ for any $\varepsilon>0$. As we mentioned, we will explain the necessity of this point shortly.



With these two facts, we know that $(\cP^\lambda_1, Z^{\lambda,b}_2)$ is absolutely continuous to two independent rate two Brownian motions on $[\varepsilon, T]$ for any $0<\varepsilon<T$, and so $\smash{Z^{\lambda,b}_2-\cP^\lambda_1}$ is absolutely continuous to rate four Brownian motion on $[\varepsilon,T]$. Since we may select $\lambda$ and $T$, we can ensure that $\lambda+\varepsilon< c < d < \lambda+T$, where $[c,d]$ is the interval of interest from Theorem~\ref{t.patchwork}, and so the Brownian comparison of $\smash{Z^{\lambda,b}_2-\cP^\lambda_1}$ from only $\varepsilon$ onwards is sufficient for our application.

In fact, we will take $\lambda$ to be sufficiently negative so that it is much smaller than $c$. This will be useful because, in that case, it is very likely that the infinite geodesic to $(x,1)$ for any $x\in[c,d]$ will be at line two or lower at $\lambda$. This is simply based on the idea that being forced to remain on the top line on the entire interval $[\lambda, c]$ will prevent the geodesic from following the parabolically curved shape we expect (see the discussion before Section~\ref{s.intro.boundary data}). The precise consequence that we require, that \eqref{e.wdf rep in intro} holds,  will be a part of the proof.

Let $X$ be a process on $[0,\infty)$ such that $X-X(0)$ is absolutely continuous to Brownian motion on $[0,T]$ for all $T>0$, and let $M^X$ be running maximum process of $X-X(0)$, and $M^B$ be the analogue for Brownian motion. 
The proof of Theorem~\ref{t.patchwork} is complete if we can say that $M^X(\cdot)-M^X(c)$ on $[c,d]$ is absolutely continuous to $M^B(\cdot) - M^B(c)$ on $[c,d]$, since the latter has the same distribution as the increment of Brownian local time by L\'evy's identity; we will simply take $X = \smash{Z^{\lambda,b}_2-\cP^\lambda_1}$ on the high probability event that \eqref{e.G' defn} holds. The claim is essentially an immediate consequence of the absolute continuity hypothesis on $X$, and completes the overview of Theorem~\ref{t.patchwork}. 

Next we say a few words on why LPP values with boundary data in Brownian environments are absolutely continuous to Brownian motion only away from zero, the source of the same statement for $Z^{\lambda,b}_2$. Consider the case where the boundary data is identically zero, and so we are considering a comparison of $x\mapsto B[(0,j)\to(x,i)]$ to Brownian motion, where, analogous to the notation for LPP values in $\cP$, $B[(0,j)\to(x,i)]$ represents the LPP value from $(0,j)$ to $(x,i)$ in an environment of independent Brownian motions. For simplicity, we may take $j=2$ and $i=1$. Then we see that
$$B[(0,2) \to (x,1)] = B_1(x) + \max_{0\leq s\leq x} \left(B_2(s) - B_1(s)\right),$$
where $B_1$ and $B_2$ are independent Brownian motions. The drift factor $\max_{0\leq s\leq x} \left(B_2(s) - B_1(s)\right)$ is the basic source of the singularity ``at zero'' causing the mean of the displayed process to be strictly positive, unlike Brownian motion. Indeed, consider an empirical average of scaled increments of the form $\sum_{i=1}^n (X(2^{-i}) - X(2^{-i+1}))2^{(i-1)/2} n^{-1}$. When $X$ is Brownian motion, this will almost surely converge to zero (by the strong law of large numbers), but when $X = B[(0,2)\to(\cdot,1)]$, the same statistic's $\limsup$ will be strictly positive almost surely (it is a constant by applying Blumenthal's zero-one law to the underlying Brownian motions).

\subsubsection{Understanding geodesic geometry at a random time}

The proof of Theorem~\ref{t.local limit} requires a detailed understanding of the geodesic geometry. A particularly elegant aspect of our proof is that it ultimately reduces to the well-known fact that two-dimensional Brownian motion avoids any given point in the plane forever with probability one.

{While the theorem is stated for four different choices of $\tau,$ in this proof overview we will mainly discuss the case where $\tau=\tau_\lambda$, which captures the main conceptual points common to all the cases. We will say a few words at the end about the other choices of $\tau$.}

First, observe that Theorem~\ref{t.local limit} concerns behaviour of $\wdf$ at a \emph{random} time $\tau_\lambda$, which is almost surely greater than $\lambda$. So a first approach to proving Theorem~\ref{t.local limit} might proceed by arguing as above to conclude from \eqref{e.wdf rep in intro} that $\wdf(\tau_\lambda)$ equals
 $\smash{Z^{\lambda,b}_{2}(\tau_\lambda) - \cP^{\lambda}_{1}(\tau_\lambda)} - \cP_1(\lambda)$. 
While it is indeed possible that one could obtain Theorem~\ref{t.local limit} by an analysis relying directly on the absolute continuity of $\smash{Z^{\lambda,b}_{2} - \cP^{\lambda}_{1}}$ to Brownian motion, the approach chosen in the paper yields additional information on the geometry of geodesics at random times and on the joint distribution of $\smash{Z^{\lambda,b}_i}$ for different $i$, which is of independent interest.

Consider the infinite geodesic in $\cP$ as its endpoint moves from $\tau_\lambda$ slightly forward. More precisely, we look at the location (i.e., line index) of the geodesic at $\tau_\lambda$ as the endpoint moves further. In other words, we need to use the formula \eqref{e.S with Z boundary data intro} with $y=\tau_\lambda-\lambda$. But now the issue is that behaviour at $\tau_\lambda$ of $\wdf$ corresponds to behaviour of an expression of the form \eqref{e.wdf rep in intro} (again with $\tau_\lambda$ in place of $\lambda$) \emph{at the origin}. But as we saw expressions like $\smash{Z^{\lambda,b}_2-\cP^\lambda_1}$ are not absolutely continuous to Brownian motion at the origin!

If we knew that we could replace $\smash{Z^{\lambda,b}_2}$ by $\smash{\cP^\lambda_2}$, it is an easy argument, using the scale invariance and independence across scales of Brownian motion, that the local limit at the origin of an expression like $\max_{0\leq s\leq x}\left(\cP^{\lambda}_{2}(s) - \cP^{\lambda}_{1}(s)\right)$ is Brownian local time; see Lemma~\ref{l.local limit BM} and Corollary~\ref{c.local limit local time}. We can handle the fact that we are looking at $\smash{\cP_2^\lambda}$ and $\smash{\cP_1^\lambda}$ at a stopping time of $\cP$ by making use of a stronger version of the Brownian Gibbs property which works for spatial analogues of stopping times for Markov processes; $\tau_\lambda$ is such a random quantity. So $\cP_2^\lambda$ and $\cP_1^\lambda$ look Brownian even at $\tau_\lambda$.

So our proof consists of showing that in the immediate right-neighbourhood of $\tau_\lambda$, we can in fact replace $Z^{\lambda,b}_2$ by $\cP^\lambda_2$; in terms of the infinite geodesic, it is at line 2 (and not lower) at $\tau_\lambda$. 

Ignoring certain technical details, such as setting up the appropriate right-continuous filtration, we review the the basic argument here.  Note that the infinite geodesic (corresponding to the starting point $y_b$) ending at $(\tau_\lambda, 1)$ will certainly be at the top line at $\tau_\lambda$ (note that this does not preclude the geodesic having a vertical stretch of jumps from $(\tau_\lambda,k)$ to $(\tau_\lambda,1)$ for some $k>1$), and that we may think of the geodesic corresponding to $y_a$ as staying on the top line at all times (the geodesic interpretation of the convenient fact that $\S(0,x) = \cP_1(x)$). Now, because $\tau_\lambda$ is a point of increase of $\wdf$, the $y_b$-geodesic must switch lines immediately after $\tau_\lambda$; if not, both geodesics use the top line in that neighbourhood and $\wdf$ would be a constant.

To say that we can replace $Z^{\lambda,b}_2$ by $\cP^\lambda_2$ in the right-neighbourhood of $\tau_\lambda$, we need to say that that the $y_b$-geodesic jumps to line 2 and not lower.

The above argument applies not only to $\tau_\lambda$, but to any stopping time which is almost surely a point of increase of $\wdf$ and is greater than $\lambda$. Dropping the condition of being greater than $\lambda$ is a simple technical task, which then takes care of the case where $\tau = \rho^h$ (hitting time of $h\in\R$) in Theorem~\ref{t.local limit}. The final case of $\xi_{[c,d]}$ (an independent uniform sample from $\wdf$ on $[c,d]$) is slightly more complicated as it is not a stopping time. For this, we rely on a representation of it as the hitting time $\smash{\rho_c^U}$, i.e., the hitting time of $\wdf(c) + U$, with $U$ a uniform random variable on $[0,\wdf(d)-\wdf(c)]$, which can then be decomposed into a mixture of stopping times $\rho_c^h$.

Now we turn to explaining why the $y_b$-geodesic must jump to line 2 and not lower.

\subsubsection{{Boundary data non-intersection properties}} To ensure this we analyze the boundary data processes $\smash{\{Z^{\lambda,b}_j\}_{j\in\N}}$ at the random location $\tau_\lambda$. Observe first from the definition~\eqref{e.Z defn in intro} that $Z^{\lambda,b}_1(\tau_\lambda - \lambda)\ge Z^{\lambda,b}_2(\tau_\lambda-\lambda)\ge\ldots $. If we knew the strict inequality $Z^{\lambda,b}_2(\tau_\lambda-\lambda) > Z^{\lambda,b}_3(\tau_\lambda-\lambda)$, the continuity of $\cP$ will imply from \eqref{e.S with Z boundary data intro} that 
$$Z^{\lambda,b}_2(\tau_\lambda-\lambda)+\cP[(\tau_\lambda,2)\to(\tau_\lambda+\varepsilon x,1)] > Z^{\lambda,b}_3(\tau_\lambda-\lambda) + \cP[(\tau_\lambda,3)\to(\tau_\lambda+\varepsilon x,1)]$$
for all small $\epsilon$ and $x$ in a compact set, and so the $y_b$-geodesic must be at line two for all such small $\epsilon$ and bounded $x$.

We also know that $Z^{\lambda,b}_1(\tau_\lambda-\lambda)= Z^{\lambda,b}_2(\tau_\lambda-\lambda)$, for otherwise the $y_b$-geodesic could not jump to line 2 at $\tau_\lambda$. So our proof has essentially reduced to showing that $\smash{Z^{\lambda,b}_1(\tau_\lambda-\lambda) > Z^{\lambda,b}_3(\tau_\lambda-\lambda)}$ almost surely. So it suffices to show that, almost surely for all $x\geq 0$, $\smash{Z^{\lambda, b}_1(x) > Z^{\lambda, b}_3(x)}$.

%

We saw earlier that $\smash{Z^{\lambda, b}_3}$ is absolutely continuous to Brownian motion away from the origin. We can now simply consider the three line ensemble given by  $\smash{(\cP_1^\lambda,\cP_2^\lambda, \smash{Z^{\lambda, b}_3})}$. As before, since $\smash{Z^{\lambda, b}_3}$ is heuristically an LPP problem to the third line, we can write $\smash{Z^{\lambda, b}_1}$ as an LPP problem in the mentioned three line ensemble. 
Working with the expressions for LPP values in \emph{two}-line environments, for any $i\in\N$, $\smash{Z^{\lambda, b}_i}$ can essentially be expressed as $\cP_i^\lambda(\cdot)$ reflected off of $\smash{Z^{\lambda, b}_{i+1}}$, in the sense of Skorohod reflection (this relation is also known as a Pitman transform, for example in \cite{sarkar2020brownian}); see \cite{warren2007dyson} for work within KPZ on Brownian motions reflecting of Dyson's Brownian motion.

With this, and the absolute continuity away from zero with respect to independent Brownian motions, the event that $Z^{\lambda, 1}(x) = Z^{\lambda, 3}(x)$ for some $x$ can be related to the event that two dimensional Brownian motion hits the origin at some strictly positive time. But of course, it is a classical fact that this event has probability zero, which, by absolute continuity, implies what we need.

These arguments are developed in Section~\ref{s.local limit proof}. 

Next, to contrast with our methods we include a brief discussion of the approach in \cite{basu2019fractal}. 
\subsection{A brief comparison with \cite{basu2019fractal}}\label{s.comparison to bgh}
The paper \cite{basu2019fractal} analyzed Brownian LPP and relied on probability estimates for rare geometric events involving geodesics. Essentially, the points of $\NC(\wdf)$ correspond exactly to the endpoints $x$ such that the geodesics from $(y_a, 0)$ and $(y_b,0)$ to $(x,1)$ are disjoint except for their shared endpoint; see Figure~\ref{f.coalescence}. (Technically, and as mentioned earlier, only the containment of $\NC(\wdf)$ in the latter set was proved in \cite{basu2019fractal}, which sufficed for their purposes, while the equality of the sets was proved later in \cite{bates2019hausdorff}.)

\begin{figure}[h]
\centering
\includegraphics[scale=0.9]{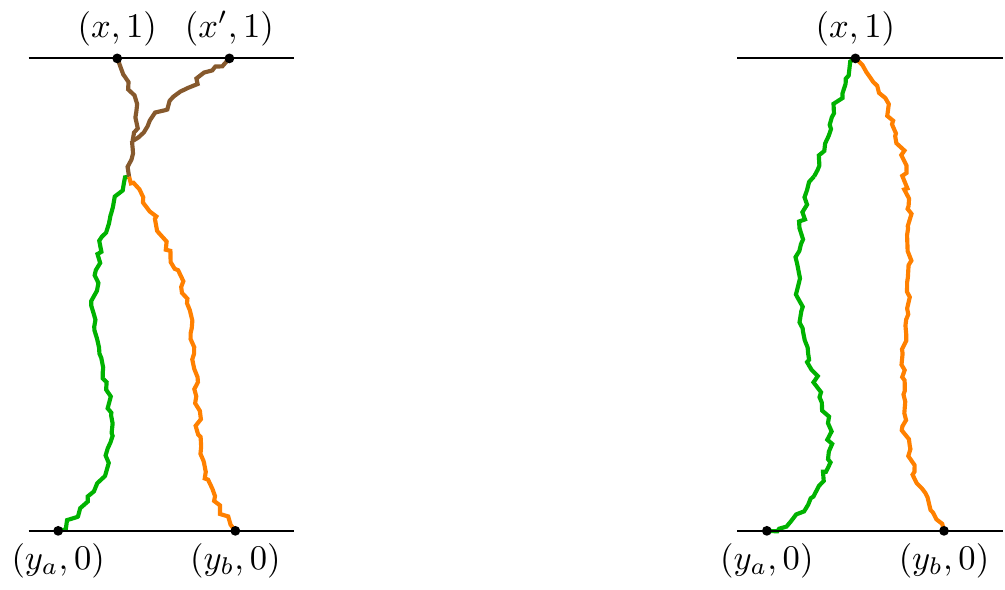}
\caption{A depiction of the two possible behaviours of the geodesics with separate fixed starting points $(y_a,0)$ and $(y_b,0)$ and common ending point $(x,1)$. Left: The two geodesics (green and orange) coalesce, with the common portion shown in brown. When $x$ is varied locally (eg. to $x'$), the brown portion will be modified, but the green and orange portions will remain fixed; since $\wdf(z)$ for $z$ close to $x$ is the difference of the orange and green weights, $\wdf$ is locally constant at $x$. Right: The contrasting situation when the two geodesics remain disjoint till the last instant. Here moving $x$ locally modifies both geodesics in an unshared way, so $\wdf$ does not remain constant.}
\label{f.coalescence}
\end{figure}

Thus, the basic observation of \cite{basu2019fractal} leads to 
considering the probability of having disjoint geodesics, in the prelimiting model of Brownian LPP; such estimates were available from earlier work of Hammond \cite{brownianLPPtransversal}. This gives an upper bound on the Hausdorff dimension of $\NC(\wdf)$. 
The lower bound was obtained by a different argument relying on a quantified form of Brownianity (H\"older regularity) of the weight profile from any given source. 

In short, in contrast to our methods, \cite{basu2019fractal} relies heavily on an analysis of geodesic geometry in Brownian LPP to study $\NC(\wdf)$ directly. Unfortunately, such an approach does not provide any insight about the finer structure of $\wdf$ that we are interested in.

\subsection*{Acknowledgments}
The authors thank Manjunath Krishnapur for originally posing the question this paper addresses, and Alan Hammond for encouraging them to consider the question of the local limit at a typical point picked according to $\wdf$. SG thanks Riddhipratim Basu for useful discussions at the early stages of this project. He is partially supported by NSF grant DMS-1855688, NSF CAREER Award DMS-1945172 and a Sloan Research Fellowship. MH acknowledges the support of NSF grant DMS-1855688.

\section{Parabolic Airy line ensemble, LPP, and  geometry of geodesics}
\label{s.geometry}
This section develops formally the central objects that will be in play throughout the paper. We then derive some useful geometric lemmas. 
We start by introducing the precise definition of the parabolic Airy sheet $\S$. Because this is defined in terms of a last passage problem in a random environment defined by the parabolic Airy line ensemble $\cP$, we first define the former concept and the latter object in Section~\ref{s.lpp and airy}.

\begin{figure}[h]
\centering
\includegraphics[scale=0.8]{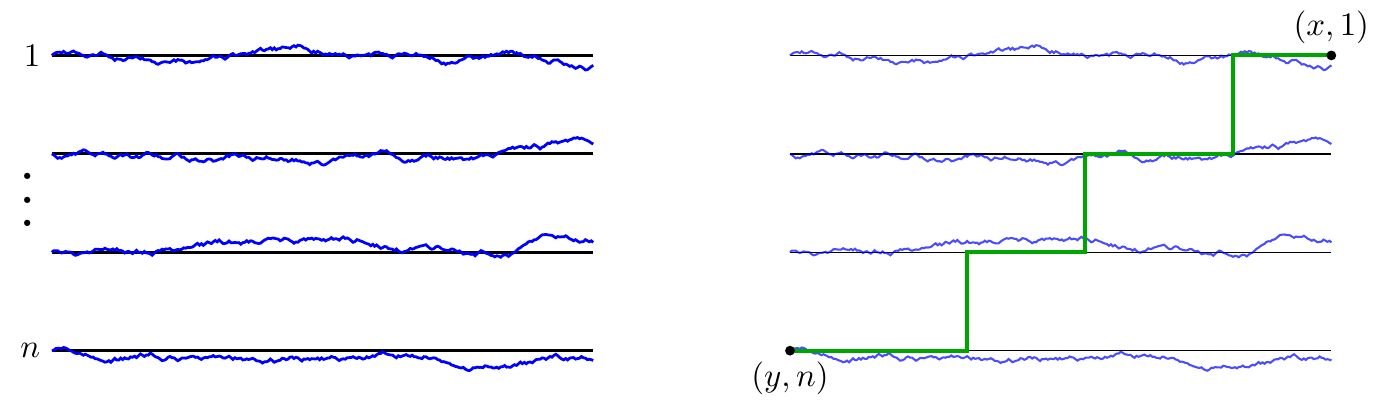}
\caption{Left: An illustration of (a subset of) the environment defined by $f$. The functions $f_i$ corresponding to each line are shown in blue on the corresponding black line for visual clarity; the function values themselves are not necessarily ordered. Right: An up-right path $\gamma$ from $(y,n)$ to $(x,1)$ is shown in green; note that in the formal definition the depicted vertical portions are not part of $\gamma$. The path's weight is the sum of the increments of $f_i$ along the portion of the $i$\textsuperscript{th} line $\gamma$ spends on it.}
\label{f.lpp}
\end{figure}

\begin{notation}
We denote the integer interval $\{i, \ldots, j\}$ by $\intint{i,j}$, and the set $\{1,2, \ldots, \}$ by $\N$. For an interval $I\subseteq \R$, $\mc C(I)$ will be the set of continuous functions $f:I\to\R$. For a finite set $A$, $\#A$ will denote its cardinality. Finally, given a function $f$ defined on a set $A$, we will denote its restriction to a subset $B\subseteq A$ by $f|_B$.
\end{notation}

\goodbreak

\subsection{Last passage percolation and the parabolic Airy line ensemble}\label{s.lpp and airy}

\begin{definition}[Last passage percolation]\label{d.lpp}
Let $f=(f_1,f_2, \ldots) : \N\times\R\to\R$ be a given sequence of continuous functions. We will think of the curves $f_1, f_2, \ldots, $ as lying on horizontal lines which are ordered vertically, indexed such that $f_1$ is the top curve, $f_2$ is below it, and so on; though the function values themselves are not assumed to be ordered. See Figure~\ref{f.lpp}. We will refer to these curves as the \emph{environment} given by $f$, or defined by $f$.

Let $y\leq x$ and $n\geq m$. An \emph{up-right} path from $(y,n)$ to $(x,m)$ is a path which begins at $(y,n)$  and moves rightward, jumping from one line to the next at various times, till it reaches $(x,m)$; see Figure~\ref{f.lpp}. Up-right paths are parametrized by their \emph{jump times} $\{t_i\}_{i=m+1}^{n}$ at which they jump from the $i$\textsuperscript{th} line to the $(i-1)$\textsuperscript{th} line. The \emph{weight} of an up-right path $\gamma$ from $(y,n)$ to $(x,m)$ through $f$ is given by
$$f[\gamma] = \sum_{i=m}^{n} \bigl(f_i(t_i) - f_i(t_{i+1})\bigr),$$
where $t_{n+1} = y$ and $t_m = x$. In other words, $f[\gamma]$ is the sum over $i$ of the increments of $f_i$ over the interval that $\gamma$ spends on the $i$\textsuperscript{th} line. The \emph{last passage value} from $(y,n)$ to $(x,m)$ through $f$ is given by
\begin{equation}\label{e.lpp defn}
f[(y,n)\to(x,m)] = \sup_{\gamma:(y,n)\to(x,m)} f[\gamma],
\end{equation}
where the supremum is over all up-right paths from $(y,n)$ to $(x,m)$. A path which achieves the supremum is called a \emph{geodesic} from $(y,n)$ to $(x,m)$.

Note that the definition \eqref{e.lpp defn} immediately adapts to the case that $f$ has domain $I\times\R$ or $I\times [0,\infty)$ for some finite integer interval $I\subseteq \N$, so long as $n,m\in I$, and, in the latter case, $x,y\geq 0$.
\end{definition}

While there are broad similarities, the heuristic LPP model described in Section~\ref{s.intro} should not be confused for the precise LPP model defined above. 
We emphasize that the directedness constraint in Definition~\ref{d.lpp} imposes the inequality $y\leq x$ and that the paths are not well-defined functions of their height, i.e., the line index.

Having defined the general LPP model, we now move on to formally introducing another central object in this paper, the parabolic Airy line ensemble.

\begin{definition}[Parabolic Airy line ensemble]\label{d.parabolic Airy line ensemble}
The parabolic Airy line ensemble $\cP: \N\times\R\to \R$ is an $\N$-indexed family of random non-intersecting continuous curves, such that the ensemble $\A$ given by $\A_i(x) = \cP_i(x) +x^2$ has finite dimensional distributions determined as follows: for every $m\in\N$ and real $t_1 < \ldots< t_m$, the point process $\{(\A_i(t_j),t_j) : i\in\N, j\in\intint{1,m}\}$ is determinantal with correlation kernel given by the extended Airy kernel $K_\mathrm{Ai}^{\mathrm{ext}}$, where
$$K_\mathrm{Ai}^{\mathrm{ext}}\bigl((x,t); (y,s)\bigr) = \begin{cases}
\int_0^\infty e^{-\lambda(t-s)}\mathrm{Ai}(x+\lambda)\mathrm{Ai}(y+\lambda)\, \mathrm d\lambda & t\geq s\\
-\int^0_{-\infty} e^{-\lambda(t-s)}\mathrm{Ai}(x+\lambda)\mathrm{Ai}(y+\lambda)\, \mathrm d\lambda & t< s;
\end{cases}$$
\end{definition}
here $\mathrm{Ai}$ is the classical Airy function. The reader is referred to \cite{manjunath} for background on determinantal point processes.

The parabolic Airy line ensemble was first constructed in \cite{corwin2014brownian}. That paper also proved $\cP$ enjoys the \emph{Brownian Gibbs property}, a central invariance property which we define next. 

\begin{definition}[Brownian Gibbs property]\label{d.bg}
Let $X:\N\to\R$ be a collection of random continuous non-intersecting curves, and fix $k\in\N$ and $[\ell,r]\subset \R$. Let $\F_{\mathrm{ext}}(k,\ell,r)$ be the $\sigma$-algebra generated by $\{X_i(x) : (i,x)\not\in \intint{1,k}\times [\ell,r]\}$, i.e., everything external to $[\ell,r]$ on the top $k$ curves. $X$ is said to possess the \emph{Brownian Gibbs} property if, conditionally on $\F_{\mathrm{ext}}(k,\ell,r)$, the distribution of $X$ on $\intint{1,  k}\times[\ell,r]$ is that of $k$ independent Brownian bridges $(B_1, \ldots, B_k)$ of rate \emph{two}, with $B_i(x) = X_i(x)$ for $x\in\{\ell,r\}$ and $i\in\intint{1, k}$, conditioned on not intersecting each other or $X_{k+1}(\cdot)$ on $[\ell,r]$.
\end{definition}

\begin{theorem}[Theorem~3.1 of \cite{corwin2014brownian}]
The parabolic Airy line ensemble $\cP$ has the Brownian Gibbs property.
\end{theorem}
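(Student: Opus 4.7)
The strategy is to realize $\cP$ as a scaling limit of a prelimiting ensemble for which the Brownian Gibbs property is transparent, and then to transfer the property through the limit. First, I would work with a natural prelimit: the eigenvalue processes of an $N\times N$ Hermitian Brownian bridge on $[-T_N, T_N]$ (equivalently, $N$ non-intersecting rate-two Brownian bridges on $[-T_N,T_N]$), with $T_N \to \infty$ at a suitable rate. This yields an $N$-line ensemble $X^N$ whose finite-dimensional distributions are given by Karlin--McGregor / Lindström--Gessel--Viennot determinants. Under the standard edge scaling (rescaling space by $N^{1/6}$ around the edge $2\sqrt N$ after a parabolic shift), these determinants converge to the extended Airy kernel $K_{\mathrm{Ai}}^{\mathrm{ext}}$. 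By Definition~\ref{d.parabolic Airy line ensemble}, this identifies the finite-dimensional distributions of any subsequential scaling limit (of the top lines, jointly) with those of $\cP$.

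Second, the prelimit $X^N$ enjoys a manifest Brownian Gibbs property. Indeed, by construction $X^N$ is the ordered eigenvalue process of non-intersecting rate-two Brownian bridges, and conditioning on $\mc F_{\mathrm{ext}}(k,\ell,r)$ and on the value of $X^N_{k+1}$ on $[\ell,r]$, the top $k$ lines on $[\ell,r]$ are precisely $k$ independent rate-two Brownian bridges between the appropriate endpoints conditioned to remain mutually non-intersecting and above $X^N_{k+1}$. This is immediate from the Markov property of Brownian motion combined with the non-intersecting construction; the edge rescaling preserves this property because it is purely affine in space and time and the diffusion rate is chosen (rate two) precisely so that Brownian bridges rescale to Brownian bridges of the same rate.

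Third, I would establish tightness of $X^N$ in the topology of uniform convergence on compact subsets of $\N \times \R$ (line by line, jointly for any finite number of lines). For the top line this follows from classical edge estimates giving Tracy--Widom tail control on $X^N_1$ at a fixed point, together with Kolmogorov--Chentsov type regularity, which in turn can be bootstrapped from the one-point tail via the Brownian Gibbs property at the prelimit level. For the lower lines one argues inductively, using one-point moderate deviations for $X^N_k$ and the rate-two Brownian-bridge comparison afforded by the prelimit Gibbs property on a compact window (dropping the non-intersection conditioning loses at most a positive-probability factor). Combined with the finite-dimensional convergence identified in the first step, tightness upgrades this to convergence in distribution of the top $k{+}1$ lines on any compact interval to the corresponding restrictions of $\cP$.

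Finally, I would transfer the Gibbs property to the limit. The Brownian Gibbs property is equivalent to a family of identities between the joint law of $(X^N_1,\ldots,X^N_k)|_{[\ell,r]}$ and its resampled version obtained by replacing the top $k$ curves on $[\ell,r]$ with $k$ independent rate-two Brownian bridges with matching endpoints conditioned to avoid each other and the $(k{+}1)$-th line. Both sides of this identity are continuous functionals of $(X^N_1,\ldots,X^N_{k+1})|_{[\ell-\delta,r+\delta]}$ (for any small $\delta>0$) on the configuration space of continuous non-intersecting curves, with the only delicate point being the conditioning on the positive-probability non-crossing event in the resampling; the functional is continuous on the set where the $(k{+}1)$-th curve does not intersect the top line, an event of full probability in the limit by the simple/non-intersecting structure of $\cP$. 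Passing the identity to the limit then yields the Brownian Gibbs property for $\cP$. The main obstacle, and the technical heart of the argument, is the tightness in the third step: controlling the regularity of the lower lines $X^N_k$ for $k\ge 2$ uniformly in $N$, which is what forces the prelimiting Gibbs structure to be invoked in a self-bootstrapping way.
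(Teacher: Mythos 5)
Your proposal follows essentially the same route as Corwin and Hammond's original proof in \cite{corwin2014brownian}, which the present paper cites without reproving: establish the Brownian Gibbs property for the prelimiting non-intersecting Brownian bridge ensemble, prove tightness of the edge-rescaled ensembles (the technical bulk, carried out by bootstrapping from the prelimit Gibbs property), and transfer the resampling identity to the limit. One caveat: the parenthetical ``dropping the non-intersection conditioning loses at most a positive-probability factor'' understates the heart of the tightness argument, which requires a uniform-in-$N$ lower bound on the acceptance probability of the non-crossing event in the resampled bridges, obtained in \cite{corwin2014brownian} via monotone coupling arguments rather than a crude union bound.
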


We will be making use of the Brownian Gibbs property of $\cP$ quite often; indeed, it is the ultimate source of all the statements we make regarding absolute continuity to Brownian objects, the first of which is the following.

\begin{corollary}\label{c.abs cont}
Let $[\ell, r]\subset \R$ and $k\in \N$. Then $\{\cP_i(\cdot) - \cP_i(\ell): i\in\intint{1,k}\}$, as a process on $[\ell, r]$, is absolutely continuous to the law of $k$ independent Brownian motions of rate two, all started at $(\ell,0)$, on $[\ell, r]$.
\end{corollary}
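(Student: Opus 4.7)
\textbf{Proof plan for Corollary~\ref{c.abs cont}.} The plan is to deduce the corollary by applying the Brownian Gibbs property on an enlarged interval, so that the bridge-to-motion singularity at the right endpoint is avoided. Fix any $r' > r$, and consider the $\sigma$-algebra $\F_{\mathrm{ext}}(k, \ell, r')$. By Definition~\ref{d.bg}, conditionally on $\F_{\mathrm{ext}}(k,\ell,r')$, the law of $(\cP_1, \ldots, \cP_k)$ on $[\ell, r']$ is that of $k$ independent rate-two Brownian bridges $(B_1, \ldots, B_k)$ with endpoints $B_i(\ell)=\cP_i(\ell)$ and $B_i(r')=\cP_i(r')$, conditioned on the event $E$ that the $B_i$ are pairwise non-intersecting on $[\ell,r']$ and lie above $\cP_{k+1}|_{[\ell,r']}$. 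Since $\cP_{k+1}$ is continuous and strictly below $\cP_k$ at the endpoints by the external data, $E$ has strictly positive conditional probability almost surely, so the conditional law of $(B_1,\ldots,B_k)$ on $[\ell,r']$ is absolutely continuous to that of the unconditioned bridges with Radon--Nikodym derivative $\mathbf{1}_E/\P(E\mid \F_{\mathrm{ext}}(k,\ell,r'))$.

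Next, I would use the standard fact that the restriction to $[\ell, r]$ of a rate-two Brownian bridge on $[\ell, r']$ from $a$ to $b$ is absolutely continuous to a rate-two Brownian motion on $[\ell,r]$ started at $a$: writing the bridge as $B_t = W_t - \tfrac{t-\ell}{r'-\ell}(W_{r'} - (b-a))$ with $W$ a rate-two Brownian motion started at $a$, or equivalently using the ratio of the Gaussian transition density at time $r'-r$ landing near $b$ to the unconditional one, one obtains an explicit Radon--Nikodym derivative that is integrable and bounded on the event that the bridge's value at $r$ lies in any compact set. (Alternatively, one can directly compute the Radon--Nikodym derivative via Girsanov or as the ratio of finite-dimensional densities, which stays bounded on $[\ell,r]$ precisely because $r<r'$.) Subtracting the starting value $\cP_i(\ell)$ does not affect absolute continuity, since it is $\F_{\mathrm{ext}}(k,\ell,r')$-measurable.

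Combining these two absolute continuity statements, the conditional law of $\{\cP_i(\cdot) - \cP_i(\ell): i \in \intint{1,k}\}$ on $[\ell, r]$, given $\F_{\mathrm{ext}}(k,\ell,r')$, is absolutely continuous to the law of $k$ independent rate-two Brownian motions started at $0$ on $[\ell, r]$. Integrating the resulting conditional Radon--Nikodym derivative against the law of the external data produces an unconditional Radon--Nikodym derivative (finite a.s.\ by Fubini), yielding the desired unconditional absolute continuity. The only subtle point in this argument, and the step that motivates the passage to the enlarged interval $[\ell, r']$, is the fact that on the Gibbs interval itself Brownian bridges are singular to Brownian motion; once one works on $[\ell,r]\subsetneq[\ell,r']$ this obstruction disappears and the remaining pieces are immediate.
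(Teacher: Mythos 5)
Your proposal is correct, and it is essentially a variant of the paper's argument implemented slightly differently. The paper applies the Brownian Gibbs property on $\intint{1,k}\times[\ell,r]$ to obtain, conditionally on the external data, $k$ independent rate-two Brownian bridges on $[\ell,r]$; to pass from bridges to motions, it then shows separately that the endpoint increment $(\cP_i(r)-\cP_i(\ell))_{i=1}^k$ has law absolutely continuous to Lebesgue measure, by a second application of Brownian Gibbs on $\intint{1,k}\times[\ell,2r]$, and invokes the fact that a Brownian bridge whose terminal value is drawn from an absolutely continuous law is absolutely continuous to Brownian motion. You instead apply Brownian Gibbs once, on an enlarged interval $[\ell,r']$ with $r'>r$, remove the (positive-probability) non-intersection conditioning, and then use the elementary fact that a Brownian bridge restricted to a \emph{proper} subinterval $[\ell,r]\subsetneq[\ell,r']$ is absolutely continuous to a Brownian motion on $[\ell,r]$. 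Both proofs exploit the same underlying phenomenon---the bridge/motion singularity lives only at the right endpoint of the Gibbs interval, so enlarging that interval removes it---but your implementation needs only one Gibbs application plus a standard bridge-restriction computation, whereas the paper's uses two Gibbs applications and the ``bridge with random endpoint'' mixture lemma. Either is perfectly adequate, and your version is arguably a bit more self-contained; just note that the observation in your last paragraph about subtracting $\cP_i(\ell)$ is indeed innocuous because $\cP_i(\ell)$ is measurable with respect to $\F_{\mathrm{ext}}(k,\ell,r')$, as you say.
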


This proof is the same as the one given for \cite[Proposition~4.1]{corwin2014brownian}, where the $k=1$ case of Corollary~\ref{c.abs cont} is proved, and we reproduce it here.

\begin{proof}[Proof of Corollary~\ref{c.abs cont}]
Applying the Brownian Gibbs property on $\intint{1,k}\times[\ell, r]$ tells us that, conditionally on $\F_{\mathrm{ext}}(\ell,r,k)$, $\{\cP_i(\cdot) - \cP_i(\ell): i\in\intint{1,k}\}$ is absolutely continuous to the law of $k$ independent Brownian bridges of rate two. To move from here to Brownian motions, we must know that $(\cP_i(r) - \cP_i(\ell))_{i=1}^k$ has law which is absolutely continuous to product Lebesgue measure. This is implied by applying the Brownian Gibbs property on $\intint{1,k}\times[\ell, 2r]$.
\end{proof}

We will often make use of this corollary without explicitly referring to it by name.

\subsection{The parabolic Airy sheet}
The parabolic Airy sheet was proved to exist in \cite{dauvergne2018directed}. Its definition, as given in \cite[Definition~8.1]{dauvergne2018directed}, is in terms of a last passage problem in the parabolic Airy line ensemble. First, for $k\in\N$ and $y>0$, let 
$$(y)_k = \left(-\left(k/2y\right)^{1/2}, k\right).$$

\begin{definition}[Parabolic Airy sheet]\label{d.airy sheet}
The parabolic Airy sheet $\S:\R^2\to\R$ is a continuous process with the following two properties. 

\begin{enumerate}
	\item $\S(\cdot + z, \cdot + z) \stackrel{d}{=} \S(\cdot, \cdot)$ for each $z\in\R$, where the equality is in distribution of processes.

	\item $\S$ can be coupled with the parabolic Airy line ensemble $\cP$ so that $\S(0,\cdot) = \cP_1(\cdot)$ and, almost surely for all $x,y,z\in\Q$ with $y>0$, there exists a random constant $K_{x,y,z}$ such that, for all $k\geq K_{x,y,z}$,
	$$\S(y,z) - \S(y,x) = \cP\bigl[(y)_k \to (z,1)\bigr] - \cP\bigl[(y)_k \to (x,1)\bigr].$$
\end{enumerate}
\end{definition}

The existence of the parabolic Airy sheet was proven in \cite{dauvergne2018directed} via Brownian last passage percolation. To state this, let $B:\N\times\R\to\R$ be a $\N$-indexed family of independent two-sided Brownian motions of rate one.

\begin{theorem}[Theorem~1.3 of \cite{dauvergne2018directed}]\label{t.airy sheet}
The Airy sheet exists and is unique in law. Further,
$$\S(y,x) = \lim_{n\to\infty} n^{-1/3}\left(B[(2yn^{2/3}, n) \to (n+2xn^{2/3}, n)] - 2n - 2(x-y)n^{2/3}\right),$$
where the limit is in distribution, in the topology of uniform convergence on compact sets.
\end{theorem}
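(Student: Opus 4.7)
The plan is to prove the theorem in two independent pieces: uniqueness in law from the defining properties, and existence by extracting a scaling limit of Brownian LPP.

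For uniqueness, suppose $\S$ and $\S'$ are two Airy sheets, each coupled via property (ii) to a common parabolic Airy line ensemble $\cP$. Property (ii) then forces the horizontal increments $x \mapsto \S(y,x) - \S(y,0)$ to equal (almost surely) the same measurable functional of $\cP$, for every rational $y > 0$; joint continuity of $\S$ promotes this to all real $y > 0$. What remains to determine jointly are the ``constants'' $\S(y,0)$; these can be tied down by invoking property (i) to reduce, via a diagonal shift, any finite-dimensional distribution $(\S(y_i, x_i))_i$ to one in which at least one coordinate sits on the line $\{y = 0\}$, where $\S$ equals $\cP_1$. Iterating this reduction across all coordinates, using the horizontal-increment rigidity of (ii) to glue, pins the full finite-dimensional law as a functional of $\cP$ alone, giving uniqueness.

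For existence, take the natural prelimit
\[
\S_n(y, x) := n^{-1/3}\bigl(B[(2yn^{2/3}, n) \srt (n + 2xn^{2/3}, 1)] - 2n - 2(x - y)n^{2/3}\bigr),
\]
and apply the continuous RSK (Pitman) correspondence: $B[\cdot]$ may be represented as an LPP value through the \emph{Brownian melon} obtained by iteratively applying Pitman reflections to $B_1,\dots,B_n$. This melon is a system of non-intersecting continuous curves whose top $k$ lines, after centering and the $n^{-1/3}$ scaling, converge as $n \to \infty$ to $(\cP_1,\dots,\cP_k)$ in the uniform-on-compact topology (this is the classical Airy-line-ensemble convergence, e.g.\ from \cite{corwin2014brownian}). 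The goal is then to show that $\S_n(y,x)$, viewed as a scaled LPP value through the melon, converges in distribution to the LPP value through $\cP$ starting from $(y)_k$ for suitably large $k$, and that the limiting process $\S$ satisfies the two defining properties. Property (i) is inherited from an exact shift invariance already present in the prelimit Brownian LPP, and property (ii) is verified by directly matching the prelimit LPP representation with the formula in Definition~\ref{d.airy sheet}.

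The main obstacle is effecting the passage to the limit \emph{jointly} in $(y,x)$. Two quantitative inputs are essential. First, a localization estimate showing that, for each fixed $(y,x)$, the optimal entry line index into the melon for the scaled problem concentrates around $k \sim cy\, n^{1/3}$ with stretched-exponential tails on deviations; this parabolic curvature estimate is exactly what the choice $(y)_k = (-(k/2y)^{1/2}, k)$ encodes in the limit. Second, a two-parameter modulus of continuity for $\S_n$, which combines H\"older regularity of the top few melon lines (inherited from the underlying Brownian structure) with continuity of the LPP value in its endpoints. Together these yield tightness of $\{\S_n\}$ in $\mc C(\R^2)$ and permit identification of the subsequential limit with the sheet predicted by $\cP$. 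Quantitative transversal fluctuation estimates, of the type proved in \cite{brownianLPPtransversal}, drive both ingredients and constitute the deepest technical work; uniform control of the geodesic's line-index is where the sharpness of KPZ scaling really enters the argument.
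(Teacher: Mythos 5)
This theorem is cited verbatim from \cite{dauvergne2018directed} and used as a black box; the present paper gives no proof of it, so there is no internal argument to check your sketch against. The only meaningful comparison is with \cite{dauvergne2018directed} itself.

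Judged on its own terms, your existence sketch is a reasonable high-level account of that paper's strategy (melon representation via RSK, convergence of the scaled melon to $\cP$, localization of the optimal entry line at height $\Theta(n^{1/3})$, two-parameter tightness, identification of the limit), and you correctly write the endpoint as $(n+2xn^{2/3},1)$ where the paper's transcription has the typo $(n+2xn^{2/3},n)$ (which would make the LPP a single-line increment). Your uniqueness argument, however, has a genuine gap. You propose to ``iterate'' the diagonal shift of property (i) so as to successively place coordinates on $\{y=0\}$, but a diagonal shift moves \emph{all} coordinates by the same amount: after aligning $y_1=0$, the remaining $y_i-y_1$ are still nonzero and cannot be brought to zero without moving the first coordinate off the axis again. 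The ``iteration'' is therefore not well-posed, and (i) alone does not determine the joint law of $(\S(y_i,x_i))_i$. Property (ii) fixes the horizontal increments $\S(y,\cdot)-\S(y,0)$ as $\cP$-functionals, but pinning down the residual additive constants $\S(y,0)$ almost surely as $\cP$-functionals (not merely one coordinate at a time, distributionally) requires additional structural input; in \cite{dauvergne2018directed} this comes from monotonicity/crossing inequalities and coalescence of geodesics in $\cP$, none of which your sketch invokes. As written, the uniqueness half does not close.
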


Note that the stationarity asserted in (i) of Definition~\ref{d.airy sheet} above allows us to assume without loss of generality that $y_a = 0$ (and so $y_b > 0$) in the definition \eqref{e.W defn} of $\wdf$, and we do so for the rest of the paper. This is useful as it gives us access to the formula in (ii) of Definition~\ref{d.airy sheet}, which requires the initial point $y$ to be positive, and also gives that $\S(y_a, x) = \cP_1(x)$.

\subsection{LPP boundary values in \texorpdfstring{$\cP$}{parabolic Airy line ensemble}}\label{s.geometry.lpp boundary values}
We now set up the framework of LPP models with  \emph{boundary data} which, as indicated in Section \ref
{s.intro.proof strategy}, will play a vital role in many of our arguments. This was introduced in \cite{sarkar2020brownian}. However, before formally defining things, we first discuss the various issues one encounters in making certain definitions and how they are addressed. 

Observe that item (ii) in Definition~\ref{d.airy sheet} is suggestive of the possibility that $\S(y,x)$ itself, for $y>0$, can be written as a limiting LPP value in the parabolic Airy line ensemble, without having to resort to an expression which only involves a difference. This is not directly true, as $\cP[(y)_k\to(x,1)]$ will diverge to $\infty$ as $k\to\infty$. Instead, it is conjectured (see \cite[Conjecture~14.2]{dauvergne2018directed}) that there exists a deterministic function $a: \R^+\times \N\to \R$ such that, for every fixed $y>0$, almost surely
\begin{equation}\label{e.conjectures S formula}
\S(y, 0) = \lim_{k\to\infty} \bigl(\cP[(y)_k\to(0,1)] - a(y,k)\bigr).
\end{equation}
However, something of this sort appears quite difficult to prove. The main utility for us of such an expression is that it would allow the use of reasoning about geodesics in analyzing $x\mapsto \wdf(x) = \S(y_b, x) - \S(y_a,x)$, for which the LPP description of Definition~\ref{d.airy sheet} is not available as it is not a difference of the parabolic Airy sheet at different ending values with common starting point.

Imagine for a moment that we could express $\S(y,x)$ as the weight of an infinite geodesic through the parabolic Airy line ensemble ending at $(x,1)$ with a formula like \eqref{e.conjectures S formula}. Then, by considering the index $i$ of the line this geodesic is at at a location $\lambda \leq x$, we see that it would be possible to write $\S(y,x)$ as
\begin{equation}\label{e.proxy formula}
\S(y,x) = \sup_{i\in\N} \bigl(\cP[y\to (\lambda, i)] + \cP[(\lambda, i)\to (x,1)]\bigr),
\end{equation}
where $\cP[y\to (\lambda, i)]$ represents a renormalized form of the weight of the infinite geodesic ending at $(\lambda, i)$, akin to the righthand side of \eqref{e.conjectures S formula} with $i$ replacing 1. Note that, while the first term in the supremum is currently difficult to define directly, the second is a standard finite LPP problem. The collection $\{\cP[y\to(\lambda, i)] : i\in\N\}$ can be thought of as \emph{boundary data}, with respect to which the finite LPP problem $\cP[(\lambda, i)\to (x,1)]$ is considered.

The crucial observation, however, is that even in the absence of \eqref{e.conjectures S formula}, equation \eqref{e.proxy formula} is also useful for analyzing $\wdf$, for the expression for $\S(y_b,x)$ involves a finite LPP problem in $\cP$, while $\S(y_a,x) = \cP_1(x)$ (since $y_a=0$).

Thus it remains to look for a well-defined quantity that can take the place of $\cP[y\to (\lambda, i)]$ in the above formula and play the role of boundary data. The problem with defining $\cP[y\to (\lambda, i)]$ directly was that $\cP[(y)_k\to(\lambda,i)]$ diverges to infinity and we do not currently have a statement of the form \eqref{e.conjectures S formula} that would have allowed us normalize  the latter by a quantity like $a(y,k)$ to obtain a well defined limit. 

To get around this, with a device similar in spirit to how differences were considered in Definition \ref{d.airy sheet}, \cite{sarkar2020brownian} considered the quantity
\begin{equation}\label{e.proxy boundary}
\cP[(y)_k\to(\lambda,i)] - \cP[(y)_k\to(\lambda,1)].
\end{equation}
Owing to the fact that the starting points are the same,
this quantity's limit can be shown to exist more easily, and it is finite. This is essentially because the finite geodesics corresponding to the two terms share the same path from their starting point to a certain height, i.e., \emph{coalesce}, in a uniform way, only to separate at a unit order distance away from the destination points $(\lambda,1)$ and $(\lambda,i)$. This makes the difference of these geodesics' weights be of unit order.

Thus the difference in \eqref{e.proxy boundary} is essentially a difference of LPP problems from the same random point, which can for example be taken to be the point of separation whose depth, as mentioned, is uniformly bounded in $k$.

{In spite of the issues with defining its weight directly}, the notion of an infinite geodesic through $\cP$ can in fact be made precise, and this is the approach \cite{sarkar2020brownian} adopts. They do this by carefully considering the limit of the finite geodesics mentioned in the previous paragraph. We shall not require that and choose simply to work with finite LPP problems with boundary data, as it avoids some of the technical issues in defining the infinite geodesic. 
However, we do make use of a few results of \cite{sarkar2020brownian} which are proved via arguments involving infinite geodesics in order to ensure the existence of the boundary data and that it satisfies a relation involving $\S$ as in \eqref{e.proxy formula} which will be discussed next.

We now introduce the precise objects. We will sometimes need to consider starting points $y>0$ different from $y_b$, and so use notation that allows for this. We fix $\lambda\in\R$ for the rest of this section and, for $i\in\N$, define
\begin{equation}\label{e.a b definition}
\begin{split}
%
b^{\lambda, y}_{i} &= \lim_{k\to\infty}\left(\cP\bigl[(y)_k \to (\lambda,i)\bigr] - \cP\bigl[(y)_k \to (\lambda,1)\bigr] + \S(y, \lambda)\right).
\end{split}
\end{equation}
(The term $\S(y,\lambda)$ allows a formula like \eqref{e.proxy formula}, see Lemma \ref{l.Airy sheet to boundary LPP problem} below.) It is proved in \cite[Theorem 3.7]{sarkar2020brownian} that, for $\lambda=0$ and every $i\in\N$, these limits exist and are finite almost surely; in fact, there exists a random integer $K_{i}$ (which corresponds to the depth at which the geodesics from $(y)_k$ to $(\lambda, i)$ and $(\lambda,1)$ coalesce) such that the limiting values in \eqref{e.a b definition} are achieved for all $k\geq K_{i}$. 

While \cite{sarkar2020brownian} states the existence of the limits \eqref{e.a b definition} only for $\lambda= 0$, their argument applies verbatim for any fixed $\lambda\in\R$. However, this does not imply the existence of a boundary data \emph{process} in $\lambda$ which will be needed for many of our arguments, and we will introduce it shortly.

Next, we define some notation for the finite LPP value term in \eqref{e.proxy formula}. Recall from \eqref{comnot} the functions $\cP^{\lambda}_{i\srt 1}:[0,\infty)\to \R$ given by
\begin{equation}\label{e.R definition}
\cP^{\lambda}_{i\srt 1}(x) = \cP[(\lambda,i)\to(\lambda+x,1)].
\end{equation}
The following lemma is the rigorous analogue of \eqref{e.proxy formula} with the boundary data $\{b^{\lambda, y}_i\}_{i\in\N}$. It was proved in \cite{sarkar2020brownian}, again in the $\lambda = 0$ case, but the argument applies to $\lambda\in\R$.

\begin{lemma}[Lemma 3.10 of \cite{sarkar2020brownian}]\label{l.Airy sheet to boundary LPP problem}
Fix $\lambda\in\R$. For all $x\geq 0$,
\begin{equation}\label{e.airy sheet boundary LPP problem}
\begin{split}
%
\S(y,\lambda+x) &= \sup_{i\in\N}\left\{b^{\lambda, y}_i + \cP^{\lambda}_{i\srt 1}(x)\right\}.
\end{split}
\end{equation}
Further, the supremums are almost surely achieved at a finite value of $i$.
\end{lemma}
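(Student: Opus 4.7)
The plan is to pass to the limit $k \to \infty$ in a deterministic finite-$k$ LPP splitting at the vertical line $\{\lambda\} \times \N$. Any up-right path from $(y)_k$ to $(\lambda+x, 1)$ crosses this vertical at a unique line index $i \in \intint{1, k}$, so concatenation of sub-paths yields
$$\cP[(y)_k \to (\lambda+x, 1)] = \max_{i \in \intint{1, k}} \bigl\{\cP[(y)_k \to (\lambda, i)] + \cP^{\lambda}_{i \srt 1}(x)\bigr\}.$$
I would first fix rational $y > 0$, $\lambda$, and $\lambda+x$, so that Definition~\ref{d.airy sheet}(ii) applies to the pair $(\lambda, \lambda+x)$ and yields a random $K$ with
$$\S(y, \lambda+x) - \S(y, \lambda) = \cP[(y)_k \to (\lambda+x, 1)] - \cP[(y)_k \to (\lambda, 1)]$$
for all $k \geq K$. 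Substituting and regrouping, the identity rewrites as
$$\S(y, \lambda+x) = \max_{i \in \intint{1, k}} \bigl\{\bigl(\cP[(y)_k \to (\lambda, i)] - \cP[(y)_k \to (\lambda, 1)] + \S(y, \lambda)\bigr) + \cP^{\lambda}_{i \srt 1}(x)\bigr\}$$
for all $k \geq K$, and by \eqref{e.a b definition} the $i$-th inner parenthesized summand converges to $b^{\lambda, y}_i$ as $k \to \infty$.

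The $\geq$ direction is then immediate: for each fixed $i \in \N$, restrict the maximum to the single index $i$ (for $k \geq \max(i, K)$) and pass to the limit to obtain $\S(y, \lambda+x) \geq b^{\lambda, y}_i + \cP^{\lambda}_{i \srt 1}(x)$; then take $\sup_{i \in \N}$. The matching upper bound, together with the a.s.\ finiteness of the maximizer, is the main obstacle: a priori the argmax $i^\star_k$ of the displayed identity could drift to infinity with $k$, which would forbid interchanging $\sup_{i \in \N}$ with $\lim_{k \to \infty}$. To rule this out, one needs the geometric input that the finite geodesics from $(y)_k$ to $(\lambda+x, 1)$ coalesce with a limiting infinite geodesic up to a random but $k$-uniform depth, so that their crossing line at $\lambda$ stabilizes at some finite random $i^\star$ for all large $k$; this is precisely the content of the coalescence estimates developed in \cite{sarkar2020brownian}, which are the very estimates that produce the existence of the limits in \eqref{e.a b definition}. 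A softer alternative would be to argue directly that $b^{\lambda, y}_i + \cP^{\lambda}_{i \srt 1}(x) \to -\infty$ as $i \to \infty$, combining a parabolic lower envelope on $b^{\lambda, y}_i$ with the cost of traversing $i-1$ Airy lines to reach the top, forcing the supremum to be attained at a finite index.

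Finally, the rationality assumption on $y, \lambda, \lambda+x$ is removed by continuity: $\S$ is a.s.\ continuous on $\R^2$, the map $(\lambda, x) \mapsto \cP^{\lambda}_{i \srt 1}(x)$ is continuous for each $i$, and continuity of $\lambda \mapsto b^{\lambda, y}_i$ follows from the same coalescence picture. Taking rational sequences $\lambda_n \to \lambda$ and $x_n \to x$, and using the uniform-in-$k$ bound on $i^\star$ from the previous step to legitimize the exchange of limit and supremum, then extends the identity to all $\lambda \in \R$ and $x \geq 0$; the same uniform bound delivers the a.s.\ finiteness of the supremum claimed in the last sentence of the lemma.
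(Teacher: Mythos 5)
The paper does not prove this lemma: it is quoted verbatim as Lemma~3.10 of \cite{sarkar2020brownian}, with the surrounding text explicitly stating that it was there established (for $\lambda=0$) via arguments involving infinite geodesics, which the present paper deliberately avoids redeveloping. There is therefore no internal proof to compare your attempt against; what follows is an assessment of your reconstruction on its own terms.

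Your skeleton is the natural one and almost certainly tracks the proof in \cite{sarkar2020brownian}: split the finite-$k$ LPP value at the vertical line $\{\lambda\}\times\N$, invoke Definition~\ref{d.airy sheet}(ii) to substitute the sheet increment, regroup so that the $i$-th summand converges to $b^{\lambda,y}_i$, and pass to the limit. The $\geq$ direction is indeed immediate, and you correctly isolate that essentially all of the content lies in the reverse inequality and the finiteness of the maximizer, i.e., in preventing the finite-$k$ argmax $i_k^\star$ from escaping to infinity.

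Two caveats. First, your ``softer alternative'' (showing $b^{\lambda,y}_i + \cP^\lambda_{i\srt 1}(x)\to-\infty$ as $i\to\infty$) is not, as stated, sufficient. Pointwise convergence of $\cP[(y)_k\to(\lambda,i)] - \cP[(y)_k\to(\lambda,1)] + \S(y,\lambda)$ to $b^{\lambda,y}_i$ for each fixed $i$ does not rule out $i_k^\star\to\infty$ with the finite-$k$ maximum remaining pinned at $\S(y,\lambda+x)$; you would still need uniformity-in-$i$, which brings you back to the coalescence estimates, so this alternative is not actually a shortcut. Second, your final paragraph leans on continuity of $\lambda\mapsto b^{\lambda,y}_i$, but this is precisely the property the present paper goes out of its way to avoid assuming: the discussion just after \eqref{e.a b definition} warns that existence of the limits for each fixed $\lambda$ does not yield a boundary data \emph{process} in $\lambda$, which is why the paper introduces $\smash{Z^{\lambda,b}_i}$. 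The rational-to-real extension is thus more delicate than your sketch suggests; the cleaner way is to note that the pointwise lemma is established for each fixed $\lambda$ directly once one has, for that $\lambda$, the $k$-uniform stabilization of $i_k^\star$ (exactly the coalescence input), rather than by varying $\lambda$ continuously through $b^{\lambda,y}_i$.
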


That the supremums in \eqref{e.airy sheet boundary LPP problem} are attained will be important for us, and we will need to understand properties of the supremum-achieving indices. This will require arguments involving the geometry of the geodesics for the finite LPP problems encoded by $\cP^\lambda_{i\srt 1}$, and the next section is devoted to developing these arguments.

\subsection{Geometry of geodesics}\label{s.geometry of geodesics subsection} 

For $\lambda\in\R$ and $y>0$ fixed, and $x\geq 0$, let $i_y^\lambda(x)$ be the minimum $i$ which achieves the supremum in the first equation of \eqref{e.airy sheet boundary LPP problem}. We will need the following properties.

\begin{lemma}\label{l.i j properties}
For each $0<y<y'$, the following hold almost surely: (i) $i^\lambda_y(x)\leq i^\lambda_{y'}(x)$ for all $x\geq 0$ and (ii) $i^\lambda_y$ is left-continuous and non-decreasing.
\end{lemma}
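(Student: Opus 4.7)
The plan is to derive both parts from the classical four-point (Monge / supermodularity) inequality for LPP values, combined with the minimality convention in the definition of $i^\lambda_y$ and, for left-continuity, the continuity of $\cP^\lambda_{i\srt 1}$ in its horizontal argument. The key LPP input, standard in this setting and proved by an exchange (tail-swapping) argument on up-right paths in a continuous environment, says: given four points whose two geodesics in one of the two natural pairings must share a common line, the two LPP values satisfy the corresponding supermodular inequality, simultaneously for a single full-probability event.

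\textbf{Part (ii).} Suppose $x_1<x_2$ and, for contradiction, $m_2:=i^\lambda_y(x_2)<m_1:=i^\lambda_y(x_1)$. Minimality of $m_1$ among the maximizers at $x_1$ forces the strict inequality
$$b^{\lambda,y}_{m_1}+\cP^\lambda_{m_1\srt 1}(x_1) > b^{\lambda,y}_{m_2}+\cP^\lambda_{m_2\srt 1}(x_1),$$
while the fact that $m_2$ achieves the supremum at $x_2$ gives
$$b^{\lambda,y}_{m_2}+\cP^\lambda_{m_2\srt 1}(x_2) \geq b^{\lambda,y}_{m_1}+\cP^\lambda_{m_1\srt 1}(x_2).$$
Adding these cancels the $b^{\lambda,y}$ terms and leaves
$$\cP^\lambda_{m_1\srt 1}(x_1)+\cP^\lambda_{m_2\srt 1}(x_2) > \cP^\lambda_{m_1\srt 1}(x_2)+\cP^\lambda_{m_2\srt 1}(x_1).$$
This contradicts the four-point inequality applied to the endpoints $A_1=(\lambda,m_1)$, $A_2=(\lambda,m_2)$, $E_1=(\lambda+x_1,1)$, $E_2=(\lambda+x_2,1)$: since $m_1>m_2$ and $x_1<x_2$, the geodesic $A_1\to E_1$ starts strictly below $A_2\to E_2$ (in physical vertical order) but ends weakly above it, so the two geodesics must share a common line at some horizontal coordinate; swapping tails there produces paths from $A_1\to E_2$ and $A_2\to E_1$ whose summed weight equals $\cP^\lambda_{m_1\srt 1}(x_1)+\cP^\lambda_{m_2\srt 1}(x_2)$, yielding $\cP^\lambda_{m_1\srt 1}(x_2)+\cP^\lambda_{m_2\srt 1}(x_1)\geq\cP^\lambda_{m_1\srt 1}(x_1)+\cP^\lambda_{m_2\srt 1}(x_2)$. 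Once monotonicity is known, left-continuity follows easily: $m_-:=\lim_{x_n\uparrow x_0}i^\lambda_y(x_n)$ exists and is an integer $\leq m:=i^\lambda_y(x_0)$ by monotonicity and integer-valuedness, and along any sequence $x_n\uparrow x_0$ with $i^\lambda_y(x_n)=m_-$ eventually, the maximizer inequality $b^{\lambda,y}_{m_-}+\cP^\lambda_{m_-\srt 1}(x_n)\geq b^{\lambda,y}_m+\cP^\lambda_{m\srt 1}(x_n)$ passes to the limit by continuity of $\cP^\lambda_{i\srt 1}$, showing $m_-$ also maximizes at $x_0$; minimality of $m$ forces $m_-=m$.

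\textbf{Part (i).} The same template applies, now with the starting parameter varied in place of the endpoint. Write $c^{\lambda,y}_i:=b^{\lambda,y}_i-\S(y,\lambda)=\lim_{k\to\infty}\bigl(\cP[(y)_k\to(\lambda,i)]-\cP[(y)_k\to(\lambda,1)]\bigr)$ via \eqref{e.a b definition}. For $y<y'$, the points $(y)_k=(-(k/2y)^{1/2},k)$ and $(y')_k=(-(k/2y')^{1/2},k)$ lie on line $k$ with $(y)_k$ strictly to the left of $(y')_k$. For any $i_1<i_2$, the four-point inequality applied to $A_1=(y)_k$, $A_2=(y')_k$, $E_1=(\lambda,i_1)$, $E_2=(\lambda,i_2)$ now uses the crossing of the \emph{other} pair, $\gamma(A_1,E_2)$ and $\gamma(A_2,E_1)$: at the horizontal coordinate of $A_2$ the former sits at or above the latter (which begins there on line $k$), while at $\lambda$ the former ends at line $i_2$ strictly below the latter at $i_1$, forcing a shared line in between. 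Swapping tails and rearranging gives
$$\cP[(y)_k\to(\lambda,i_1)]+\cP[(y')_k\to(\lambda,i_2)] \geq \cP[(y)_k\to(\lambda,i_2)]+\cP[(y')_k\to(\lambda,i_1)].$$
Subtracting $\cP[(y)_k\to(\lambda,1)]+\cP[(y')_k\to(\lambda,1)]$ from both sides and using that the prelimit is eventually constant in $k$ a.s.\ (by the corresponding result of \cite{sarkar2020brownian}) yields $c^{\lambda,y}_{i_1}-c^{\lambda,y'}_{i_1}\geq c^{\lambda,y}_{i_2}-c^{\lambda,y'}_{i_2}$, and hence the identical ``supermodular in $i$'' inequality for the $b$'s (the $\S(y,\lambda)-\S(y',\lambda)$ correction cancels since it is constant in $i$). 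Suppose now, for contradiction, that $m:=i^\lambda_y(x)>m':=i^\lambda_{y'}(x)$ at some $x\geq 0$. Arguing as in part~(ii) but with $y,y'$ in place of $x_1,x_2$: minimality of $m$ at parameter $y$ gives strict $b^{\lambda,y}_m+\cP^\lambda_{m\srt 1}(x)>b^{\lambda,y}_{m'}+\cP^\lambda_{m'\srt 1}(x)$, maximality of $m'$ at $y'$ gives $b^{\lambda,y'}_{m'}+\cP^\lambda_{m'\srt 1}(x)\geq b^{\lambda,y'}_m+\cP^\lambda_{m\srt 1}(x)$, and adding cancels the $\cP^\lambda$ terms to produce $b^{\lambda,y}_m-b^{\lambda,y'}_m>b^{\lambda,y}_{m'}-b^{\lambda,y'}_{m'}$, directly contradicting the supermodularity just established with $i_1=m'$, $i_2=m$.

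\textbf{Main obstacle.} The delicate point is organizational: ensuring that the four-point inequality, together with its consequences for $b^{\lambda,y}$, holds almost surely \emph{simultaneously} for all relevant parameters, and that the limit $k\to\infty$ in part~(i) commutes with the inequality. The former is handled by countability of the underlying integer tuples together with continuity of $\cP^\lambda_{i\srt 1}$ to upgrade rational to real horizontal coordinates; the latter uses the eventual constancy in $k$ of the prelimiting sequences from \cite{sarkar2020brownian}. All remaining steps are routine algebraic consequences of minimality and maximality of the respective indices.
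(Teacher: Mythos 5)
Your proposal is correct and follows essentially the same route as the paper: the key input is the four-point/crossing inequality (the paper's Lemma~\ref{l.crossing}), applied once directly to the finite LPP terms $\cP^\lambda_{\cdot\srt 1}$ for part~(ii), and once to the prelimiting differences defining $b^{\lambda,y}_i$ to get the supermodularity-in-$(i,y)$ inequality for part~(i) (which is precisely the paper's Corollary~\ref{c.a-b monotonicity}, just rearranged), with minimality/maximality of the indices producing the contradiction and continuity giving left-continuity. The only cosmetic difference is that you absorb the crossing lemma as a black-box "Monge" input rather than restating it; the concern you raise about simultaneity and commuting the $k\to\infty$ limit is handled the same way the paper handles it — eventual constancy in $k$ and the deterministic validity of the crossing inequality — and does not require extra work beyond what you already note.
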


The intuition behind Lemma~\ref{l.i j properties} is that $\smash{i^\lambda_y(x)}$ and $\smash{i^\lambda_{y'}(x)}$ should be thought of as the line index at $\lambda$ of the infinite geodesic from $y$ to $(\lambda+x,1)$ and $y'$ to $(\lambda+x,1)$ respectively. Since $y < y'$, by planarity, the first infinite geodesic should be to the left of the second, which is encoded by (i) above. Similarly, as $x$ increases, the geodesic corresponding to it should move to the right, which is encoded by (ii). 

On a technical level, Lemma~\ref{l.i j properties} is a consequence of certain monotonicity properties of $\{b^{\lambda,y}_i\}_{i\in\N}$ which encode the planarity relations. One easy to see relation which will have significance later is that, for fixed $y>0$, $b^{\lambda, y}_{i+1} \leq b^{\lambda, y}_i$. This is in fact immediate from  the inequality $\cP[(y_a)_k\to(\lambda,i+1)] \leq \cP[(y_a)_k\to(\lambda, i)]$ for all $k$. The latter follows by considering the geodesic corresponding to the lefthand side and extending it to jump to line $i$ at the last instant. The extended path is in the set of paths that $\cP[(y_a)_k\to(\lambda,i)]$ is maximizing over, which gives the inequality.

However, we will not need the monotonicity of $\{b^{\lambda, y}_i\}_{i\in\N}$ for fixed $y$ to prove parts (i) and (ii) of Lemma~\ref{l.i j properties}. Instead, we will require the monotonicity of the \emph{difference} at $y$ and $y'$, i.e., that we have $b^{\lambda, y}_{i+1}-b^{\lambda, y'}_{i+1} \leq b^{\lambda, y}_{i}-b^{\lambda, y'}_{i}$, which is a consequence of a simple planarity argument that we record next.

\begin{lemma}[Crossing lemma]\label{l.crossing}
Let $f:\N\times\R\to\R$ be any sequence of functions. Let $n_1\geq m_1$, $n_2\geq m_2$, $n_1\leq n_2$, and $m_1\leq m_2$. Also let $y_1\leq x_1$, $y_2\leq x_2$, $x_1\leq x_2$, and $y_1\leq y_2$. Then we have
$$f[(y_1,n_1)\to(x_1,m_1)] + f[(y_2,n_2)\to(x_2,m_2)] \geq f[(y_1,n_1)\to(x_2,m_2)] + f[(y_2,n_2)\to(x_1,m_1)].$$
\end{lemma}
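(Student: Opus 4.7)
My plan is to use the classical crossing-and-swap argument from planar last passage percolation. First I would dispose of a degenerate case: if $n_1 < m_2$, no up-right path exists from $(y_1, n_1)$ to $(x_2, m_2)$, so the corresponding supremum is over an empty set (interpret as $-\infty$) and the inequality is trivial. Henceforth assume $m_1 \leq m_2 \leq n_1 \leq n_2$, so that all four LPP values are well-defined.

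Fix $\epsilon > 0$ and choose up-right paths $\gamma_{12}$ from $(y_1, n_1)$ to $(x_2, m_2)$ and $\gamma_{21}$ from $(y_2, n_2)$ to $(x_1, m_1)$ whose weights are within $\epsilon$ of the corresponding LPP values; this approximation is needed because the supremum in \eqref{e.lpp defn} is not assumed attained. The crux of the proof is to show that $\gamma_{12}$ and $\gamma_{21}$ must share a common point $(t, i)$. Both paths visit every line in $\intint{m_2, n_1}$. On the top common line $n_1$, $\gamma_{12}$ starts at horizontal coordinate $y_1$, while the leftmost point of $\gamma_{21}$ on line $n_1$ is at least $y_2 \geq y_1$; symmetrically, on the bottom common line $m_2$, $\gamma_{12}$ ends at $x_2$, while the rightmost point of $\gamma_{21}$ on line $m_2$ is at most $x_1 \leq x_2$. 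If the paths were disjoint, then on every common line one would lie strictly to one side of the other, and these boundary facts would force a flip of the relative order between lines $n_1$ and $m_2$. But a flip between adjacent lines $i$ and $i-1$ is impossible: letting $t_i^{12}$ denote the time at which $\gamma_{12}$ jumps from line $i$ to line $i-1$ and similarly for $\gamma_{21}$, the assumption ``$\gamma_{12}$ strictly left of $\gamma_{21}$ on line $i$'' yields $t_i^{12} < t_i^{21}$, whereas ``$\gamma_{12}$ strictly right of $\gamma_{21}$ on line $i-1$'' (using that the entry times into line $i-1$ equal these same jump times) yields $t_i^{12} > t_i^{21}$. This contradiction produces the common point.

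Given such a common point $(t, i)$, I would swap suffixes: let $\gamma_1'$ follow $\gamma_{12}$ from $(y_1, n_1)$ up to $(t, i)$ and then $\gamma_{21}$ from $(t, i)$ onward to $(x_1, m_1)$, and define $\gamma_2'$ analogously with the roles interchanged. Both are valid up-right paths between the endpoints appearing on the left-hand side. A line-by-line computation, tracking contributions on lines above $i$, on line $i$ itself (where the $f_i(t)$ terms cancel between the two new paths), and on lines below $i$, shows $f[\gamma_1'] + f[\gamma_2'] = f[\gamma_{12}] + f[\gamma_{21}]$. Combining this with the bounds $f[\gamma_1'] \leq f[(y_1, n_1) \to (x_1, m_1)]$ and $f[\gamma_2'] \leq f[(y_2, n_2) \to (x_2, m_2)]$ and letting $\epsilon \to 0$ yields the desired inequality.

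The principal obstacle is the planarity argument in the third step: while morally obvious from the picture, it must be verified by the jump-time contradiction above to rule out the scenario in which the paths swap sides across an adjacent pair of lines without actually meeting. Once a crossing is secured, the suffix-swap construction and the accompanying line-by-line weight bookkeeping are routine.
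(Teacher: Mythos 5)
Your argument is the same cross-and-swap proof the paper gives, with the planarity step (which the paper asserts without elaboration) filled in carefully via the jump-time contradiction, and with the supremum approximated by near-optimal paths rather than assumed attained. One tiny point you omit: besides $n_1 < m_2$, the case $y_2 > x_1$ also makes $f[(y_2,n_2)\to(x_1,m_1)] = -\infty$ and should be dispatched in the same trivial way before invoking the crossing argument.
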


This lemma has appeared many times in the LPP literature, see for instance \cite{basu2019fractal,dauvergne2018directed,balazs2020non,cator2019attractiveness}. See also Figure~\ref{f.crossing} for a visual aid for the proof.

\begin{figure}[h]
\centering
\includegraphics[scale=1]{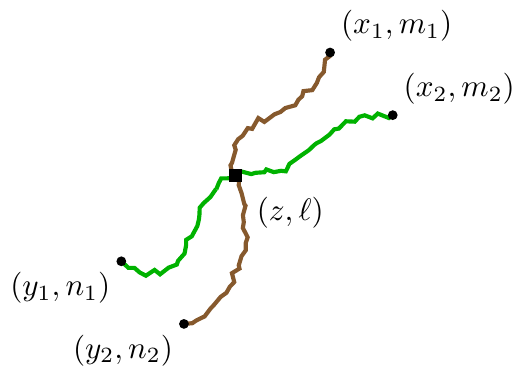}
\caption{An illustration of a choice of starting and ending points which ensures, by planarity, that the geodesics between those points must intersect. The intersection, depicted by a square, is at $(z,\ell)$. By following the brown geodesic from $(y_2, n_2)$ to $(z,\ell)$, and then following the green geodesic from $(z,\ell)$ to $(x_2, m_2)$, we obtain a path from $(y_2,n_2)$ to $(x_2,m_2)$ whose weight can be at most $f[(y_2,n_2) \to (x_2,m_2)]$, and similarly for from $(y_1,n_1)$ to $(x_1,m_1)$.}
\label{f.crossing}
\end{figure}

\begin{proof}[Proof of Lemma~\ref{l.crossing}]
By the positioning of the points assumed and planarity, we have that any geodesic from $(y_1,n_1)$ to $(x_2,m_2)$ must cross any geodesic from $(y_2,n_2)$ to $(x_1,m_1)$ at some point $(z,\ell)$, with $z\in[y_2,x_1]$ and $m_2\leq \ell\leq n_1$. This implies
\begin{align*}
f[(y_1,n_1)\to(x_2,m_2)] + f[(y_2,n_2)\to(x_1,m_1)] &= f[(y_1,n_1)\to(z,\ell)] + f[(z,\ell) \to (x_2,m_2)]\\
&\quad + f[(y_2,n_2)\to(z, \ell)] + f[(z, \ell)\to (x_1,m_1)]. 
\end{align*}
But we have that $f[(y_1,n_1)\to(z, \ell)] +f[(z,\ell)\to (x_1,m_1)] \leq f[(y_1,n_1)\to (x_1,m_1)]$ as the lefthand side is the weight of a particular up-right path from $(y_1,n_1)$ to $(x_1,m_1)$. We have a similar inequality with $y_2, n_2, x_2, m_2$. Applying these inequalities to the last display completes the proof of Lemma~\ref{l.crossing}.
\end{proof}

\begin{corollary}\label{c.a-b monotonicity}
Suppose $j\geq i$ and $y'> y > 0$. Then we have $b^{\lambda, y'}_j - b^{\lambda, y}_j \geq b^{\lambda, y'}_i-b^{\lambda, y}_i$.
\end{corollary}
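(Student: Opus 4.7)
The plan is to unpack the definition~\eqref{e.a b definition} of $b^{\lambda, y}_i$, cancel the terms that appear symmetrically on both sides, and then recognize that what remains is a direct application of the crossing lemma (Lemma~\ref{l.crossing}) applied prelimit and then passed to the $k\to\infty$ limit.

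Writing out each of the four quantities $b^{\lambda, y'}_j, b^{\lambda, y}_j, b^{\lambda, y'}_i, b^{\lambda, y}_i$ using \eqref{e.a b definition} and forming the expression $\left(b^{\lambda, y'}_j - b^{\lambda, y}_j\right) - \left(b^{\lambda, y'}_i - b^{\lambda, y}_i\right)$, I observe that the $\S$ terms $\S(y',\lambda) - \S(y,\lambda)$ appear with opposite signs in the two parenthesized differences and so cancel. The same holds for the $\cP[(y')_k\to(\lambda,1)]$ and $\cP[(y)_k\to(\lambda,1)]$ terms. Thus it suffices to show that, for all sufficiently large $k$,
\begin{equation*}
\cP\bigl[(y')_k \to (\lambda, j)\bigr] + \cP\bigl[(y)_k \to (\lambda, i)\bigr] \ge \cP\bigl[(y)_k \to (\lambda, j)\bigr] + \cP\bigl[(y')_k \to (\lambda, i)\bigr],
\end{equation*}
since the $k\to\infty$ limit of each of the four LPP values (with the centering in \eqref{e.a b definition}) exists.

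To verify this inequality I would apply Lemma~\ref{l.crossing} with the identifications $(y_1,n_1) = (y)_k$, $(y_2,n_2) = (y')_k$, $(x_1,m_1) = (\lambda,i)$, and $(x_2,m_2) = (\lambda,j)$. The hypotheses of the crossing lemma are then routine to check: the starting heights are equal ($n_1 = n_2 = k$), and since $y < y'$ we have $-(k/2y)^{1/2} < -(k/2y')^{1/2}$, which gives $y_1 \le y_2$; the ending coordinates satisfy $x_1 = x_2 = \lambda$, and the line indices satisfy $m_1 = i \le j = m_2$ by hypothesis. The conclusion of Lemma~\ref{l.crossing} is then exactly the displayed inequality above (with the pair $(y)_k \to (\lambda,i)$ and $(y')_k \to (\lambda, j)$ on the parallel side).

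I do not anticipate any serious obstacles here; the key insight is simply the recognition that the monotonicity structure of the boundary data is inherited directly from the prelimit planarity structure of LPP through $\cP$, with everything involving $\S(y,\lambda)$ or $\cP[(y)_k\to(\lambda,1)]$ canceling cleanly. Taking the $k\to\infty$ limit using the existence statement from \cite[Theorem~3.7]{sarkar2020brownian} recalled before \eqref{e.R definition} then gives the desired inequality.
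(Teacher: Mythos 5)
Your proposal is correct and follows essentially the same argument as the paper: reduce to the prelimit inequality $\cP[(y')_k\to (\lambda,j)] + \cP[(y)_k\to (\lambda,i)] \ge \cP[(y)_k\to (\lambda,j)] + \cP[(y')_k\to (\lambda,i)]$ and deduce it from Lemma~\ref{l.crossing} using the observation that $y<y'$ implies $(y)_k \le (y')_k$. Your version is somewhat more explicit about the cancellation of the $\S$ and $\cP[\cdot\to(\lambda,1)]$ terms and about the identifications in the crossing lemma, but the content is identical.
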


\begin{proof}
Recalling the definition \eqref{e.a b definition} of $b^{\lambda, y}_j$, it is enough to show that, for all $k\in\N$,
$$\cP[(y')_k\to (\lambda,j)] - \cP[(y)_k\to (\lambda,j)] \geq \cP[(y')_k\to (\lambda,i)] - \cP[(y)_k\to (\lambda,i)]$$
and then take the limit as $k\to\infty$. This last inequality is an immediate consequence of Lemma~\ref{l.crossing} since $y \leq y'$ implies $(y)_k\leq (y')_k$.
\end{proof}

\begin{remark}
It is not the case that $b^{\lambda, y'}_j - b^{\lambda, y}_j > b^{\lambda,y'}_i - b^{\lambda,y}_i$ almost surely, i.e., strict inequality does not always hold, though it may hold with positive probability. Indeed, observe from the proof of Lemma~\ref{l.crossing} that equality holds in that lemma when the geodesic from $(y_1,n_1)$ to $(x_1, m_1)$ coalesces with the geodesic from $(y_2,n_2)$ to $(x_2,m_2)$, i.e, when those two geodesics are not disjoint. This does occur for finite geodesics from $(y)_k$ and $(y')_k$ in $\cP$ with positive probability, and the coalescence asserted via the existence of $K_i$ in the definition \eqref{e.a b definition} of $b^{\lambda,y}_i$ implies that equality can hold in the limit as $k\to\infty$ as well with positive probability.
\end{remark}

The outstanding proof of Lemma~\ref{l.W increasing} is also an immediate consequence of Lemma~\ref{l.crossing}:

\begin{proof}[Proof of Lemma~\ref{l.W increasing}]
We have to show that $\S(y_b, x_1) - \S(y_a, x_1) \leq \S(y_b, x_2) - \S(y_a, x_2)$ whenever $x_1 \leq x_2$, i.e.,
$$\S(y_b, x_2) + \S(y_a, x_1) \geq \S(y_b, x_1) + \S(y_a, x_2).$$
The analogous inequality for Brownian LPP follows from Lemma~\ref{l.crossing}, and the convergence of Brownian LPP to the parabolic Airy sheet (Theorem~\ref{t.airy sheet}) implies the displayed inequality.
\end{proof}



Now we turn to the proof of Lemma~\ref{l.i j properties} on the ordering and monotonicity properties of $i^\lambda_y$. Based on the intuition in terms of planarity of infinite geodesics explained after Lemma~\ref{l.i j properties}, the proof idea for each property is to show that assuming the contrary leads to a contradiction between the definition of $i$ as the maximizer and the relations imposed by planarity via Corollary~\ref{c.a-b monotonicity}.

\begin{proof}[Proof of Lemma~\ref{l.i j properties}]
We start with (i). Suppose to the contrary that there is an $x^*\geq 0$ such that $i^\lambda_y(x^*) > i^\lambda_{y'}(x^*)$. Let us for shorthand call these values $i$ and $j$, so that we have assumed $i>j$. Then,
\begin{align*}
b^{\lambda,y}_i + \cP^{\lambda}_{i\srt 1}(x^*) &> b^{\lambda,y}_j + \cP^{\lambda}_{j\srt 1}(x^*)\\
b^{\lambda,y'}_j + \cP^{\lambda}_{j\srt 1}(x^*) &\geq b^{\lambda,y'}_i + \cP^{\lambda}_{i\srt 1}(x^*);
\end{align*}
the first is a strict inequality since $i = i^\lambda(x^*)$ is the minimum index which achieves the supremum in \eqref{e.airy sheet boundary LPP problem} and we have assumed $i>j$. The above pair of inequalities implies that 
$\smash{b^{\lambda,y'}_j} - \smash{b^{\lambda,y}_j} > \smash{b^{\lambda,y'}_i} - \smash{b^{\lambda,y}_i}$, which contradicts Corollary~\ref{c.a-b monotonicity}.

Next we turn to (ii). We start with showing that $i^{\lambda}_y$ is non-decreasing.

Again to the contrary, suppose we have $x_1 < x_2$ with $i^{\lambda}_y(x_1) > i^{\lambda}_y(x_2)$. Let $i_1 = i^{\lambda}_y(x_1)$ and $i_2=i^{\lambda}_y(x_2)$ so that $i_1 > i_2$. Then,
\begin{align*}
b^{\lambda, y}_{i_1} + \cP^{\lambda}_{i_1\srt 1}(x_1) &> b^{\lambda, y}_{i_2} + \cP^{\lambda}_{i_2\srt 1}(x_1)\\
b^{\lambda, y}_{i_2} + \cP^{\lambda}_{i_2\srt 1}(x_2) &\geq b^{\lambda, y}_{i_1} + \cP^{\lambda}_{i_1\srt 1}(x_2);
\end{align*}
again the first inequality is strict since $i_1$ is the minimum index which achieves the supremum in \eqref{e.airy sheet boundary LPP problem}. These two inequalities combined imply that 
$$\cP^{\lambda}_{i_1\srt 1}(x_1) + \cP^{\lambda}_{i_2\srt 1}(x_2) > \cP^{\lambda}_{i_1\srt 1}(x_2) + \cP^{\lambda}_{i_2\srt 1}(x_1),$$
which is equivalent to 
\begin{align*}
\MoveEqLeft[16]
\cP[(\lambda, i_1)\to (\lambda+x_1, 1)] + \cP[(\lambda, i_2) \to (\lambda+x,1)]\\
&> \cP[(\lambda, i_1)\to(\lambda+x_2,1)] + \cP[(\lambda,i_2)\to(\lambda+x_1,1)];
\end{align*}
but since $i_1 > i_2$ and $x_1<x_2$, this contradicts Lemma~\ref{l.crossing}.

Now we turn to showing $\smash{i^\lambda_y}$ is left-continuous. Left-continuity is a consequence of the fact that $b_i^{\lambda,y} + \cP_{i\srt 1}^\lambda(x)$ is a continuous function of $x$ for each $i\in\N$ and $i^\lambda_y(x)$ is defined as the minimum index which achieves the supremum in \eqref{e.airy sheet boundary LPP problem}. Indeed, suppose that $\lim_{x\uparrow x^*} i^\lambda_y(x) = i^*$. This implies that 
\begin{align*}
b_{i^*}^{\lambda,y} + \cP^\lambda_{i^*\srt 1}(x) \geq b_{i}^{\lambda,y} + \cP^\lambda_{i\srt 1}(x)
\end{align*}
for all $i\neq i^*$ and $x\in[x^*-\varepsilon, x^*)$ with $\varepsilon>0$ sufficiently small, since, for $x$ in this interval, $i^\lambda(x) = i^*$. Taking the limit of $x\uparrow x^*$ shows that the displayed inequality also holds for $x=x^*$. Since $i^\lambda_y(x^*)$ is the minimum index achieving the supremum in \eqref{e.airy sheet boundary LPP problem}, we have that $i^\lambda_y(x^*) \leq i^*$. But the monotonicity of $i^\lambda_y$ in $x$ implies $i^\lambda_y(x^*)\geq i^*$, which completes the proof of part (ii) of Lemma~\ref{l.i j properties}.
%
\end{proof}

In the rest of the paper we will typically only be working with the starting position $y_b$, and so we will drop the $y$ dependence in the notation for boundary data. That is, $\{b^\lambda_i\}_{i\in\N}$ will be the boundary data from $y_b$.

\subsection{Boundary data process}

In many of our arguments we will need a boundary data \emph{process}, as we explained in the proof overview in Section~\ref{s.intro.proof strategy}. But because the limits \eqref{e.a b definition} defining $\{b_i^\lambda\}_{i\in\N}$ are only known to exist almost surely for fixed $\lambda$, and it is not a priori clear from its definition that it should be continuous in $\lambda$, we need a different specification of the boundary data as a process in the location $\lambda$. This is the role of the next definition, which is based on the idea that we can get boundary data at $\lambda+x$ by considering the boundary data at $\lambda$ and an LPP problem on $[\lambda,\lambda+x]$.

For $\lambda\in\R$ and $i\in\N$, define $Z^{\lambda,b}_i:[0,\infty)\to\R$ by 
\begin{equation}\label{e.Z_i^lambda defn}
Z_i^{\lambda,b}(x) = \sup_{j\geq i}\left\{b_j^\lambda + \cP^\lambda_{j\srt i}(x)\right\} = \sup_{j\geq i}\left\{b_j^\lambda + \cP[(\lambda,j)\to(\lambda+x,i)]\right\}.
\end{equation}
Note that, almost surely, for all $i\in\N$ and $x\geq 0$, $Z^{\lambda,b}_i(x)\geq Z^{\lambda,b}_{i+1}(x)$. We also have that $\smash{\S(y_b, \lambda+x) = Z_1^{\lambda,b}(x)}$ for all $x\geq 0$ by Lemma~\ref{l.Airy sheet to boundary LPP problem}. However, we can get another representation of $\S(y_b, \lambda+ x)$ in terms of all the $Z_i^{\lambda,b}$ analogous to Lemma~\ref{l.Airy sheet to boundary LPP problem}, which is the sense in which $\smash{Z^{\lambda,b}_i(x)}$ is the boundary data from $y_b$ at $x$ on line $i$ and can be thought of as the process $\smash{x\mapsto b_i^{\lambda+x}}$.

\begin{lemma}\label{l.S Z^lambda representation}
Let $\lambda\in\R$ be fixed. Then almost surely, for all $x\geq y\geq 0$,
$$\S(y_b, \lambda+ x) = \sup_{i\in\N} \left\{Z^{\lambda,b}_i(y) + \cP[(\lambda+y,i)\to(\lambda+x,1)]\right\}.$$
Moreover, the supremum is attained at a finite index.
\end{lemma}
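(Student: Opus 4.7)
The plan is to expand the right-hand side using the definition \eqref{e.Z_i^lambda defn} of $Z^{\lambda,b}_i$, swap the order of the two resulting suprema, collapse the inner supremum via a standard LPP splitting identity at the intermediate vertical line $\lambda+y$, and then apply Lemma~\ref{l.Airy sheet to boundary LPP problem} to identify what remains with $\S(y_b,\lambda+x)$.

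Concretely, substituting the definition of $Z_i^{\lambda,b}(y)$ into the RHS yields
\begin{align*}
\sup_{i\in\N}\left\{Z^{\lambda,b}_i(y)+\cP[(\lambda+y,i)\to(\lambda+x,1)]\right\}
&=\sup_{i\geq 1}\sup_{j\geq i}\Bigl\{b^\lambda_j+\cP[(\lambda,j)\to(\lambda+y,i)]\\
&\qquad+\cP[(\lambda+y,i)\to(\lambda+x,1)]\Bigr\},
\end{align*}
and switching the order of the suprema rewrites this as
$$\sup_{j\geq 1}\left\{b^\lambda_j+\sup_{1\leq i\leq j}\bigl[\cP[(\lambda,j)\to(\lambda+y,i)]+\cP[(\lambda+y,i)\to(\lambda+x,1)]\bigr]\right\}.$$

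The crucial step is then the deterministic LPP splitting identity: for every continuous line ensemble, every $j\in\N$, and every $0\leq y\leq x$,
$$\cP[(\lambda,j)\to(\lambda+x,1)]=\sup_{1\leq i\leq j}\left\{\cP[(\lambda,j)\to(\lambda+y,i)]+\cP[(\lambda+y,i)\to(\lambda+x,1)]\right\}.$$
The $\geq$ direction is immediate by concatenating near-optimal up-right paths at $(\lambda+y,i)$. For the $\leq$ direction, take an up-right path $\gamma$ from $(\lambda,j)$ to $(\lambda+x,1)$ with weight arbitrarily close to the left-hand side, and let $i$ be the line $\gamma$ occupies just before any jump time equal to $\lambda+y$ (equivalently, the largest index among the lines $\gamma$ visits at horizontal coordinate $\lambda+y$). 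Splitting $\gamma$ at $(\lambda+y,i)$ produces two up-right subpaths whose weights sum exactly to $f[\gamma]$; this is the only mildly delicate bookkeeping point, and amounts to keeping track of the convention for where to attribute increments accumulated during jumps occurring precisely at $\lambda+y$.

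Plugging this identity back in and using Lemma~\ref{l.Airy sheet to boundary LPP problem} gives
$$\sup_{j\geq 1}\left\{b^\lambda_j+\cP[(\lambda,j)\to(\lambda+x,1)]\right\}=\S(y_b,\lambda+x),$$
which is the desired equality; since both ingredients (Lemma~\ref{l.Airy sheet to boundary LPP problem} and the splitting identity) hold almost surely uniformly in $x,y$, so does the displayed identity. For the final attainment claim, Lemma~\ref{l.Airy sheet to boundary LPP problem} supplies a finite maximizer $j^\star$ in the outer supremum, and the inner supremum over the finite set $\{1,\ldots,j^\star\}$ is trivially attained at some $i^\star$; then
$$Z^{\lambda,b}_{i^\star}(y)+\cP[(\lambda+y,i^\star)\to(\lambda+x,1)]\geq b^\lambda_{j^\star}+\cP[(\lambda,j^\star)\to(\lambda+x,1)]=\S(y_b,\lambda+x),$$
while the reverse inequality follows from the equality just proved, so $i^\star$ realizes the supremum. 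The only real obstacle is the pedantic handling of path jumps exactly at $\lambda+y$ in the splitting identity; everything else is unwinding definitions and reusing Lemma~\ref{l.Airy sheet to boundary LPP problem}.
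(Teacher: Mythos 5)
Your proof is correct and follows essentially the same route as the paper's: both expand via the definition of $Z^{\lambda,b}_i$, decompose the LPP value $\cP[(\lambda,j)\to(\lambda+x,1)]$ at the intermediate vertical line $\lambda+y$, swap the two suprema, and invoke Lemma~\ref{l.Airy sheet to boundary LPP problem}; the only superficial difference is the direction in which the chain of equalities is read.
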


\begin{proof}
We have from Lemma~\ref{l.Airy sheet to boundary LPP problem} and the definition \eqref{e.Z_i^lambda defn} of $Z^{\lambda,b}_1$ that
$$\S(y_b, \lambda+x) = Z^{\lambda,b}_1(x).$$
Expanding the definition of the righthand side, and decomposing $\cP[(\lambda, j)\to (\lambda+x,1)]$ based on the location of the corresponding geodesic at $\lambda+y$, we see that
\begin{align*}
Z^{\lambda,b}_1(x) &= \sup_{j\in\N}\left\{b^\lambda_j + \cP[(\lambda,j)\to (\lambda+x,1)]\right\}\\%
&=\sup_{j\in\N} \sup_{i\leq j} \left\{b^\lambda_j + \cP[(\lambda, j)\to (\lambda+y, i)] + \cP[(\lambda+y,i)\to(\lambda+x,1)]\right\}\\
&=\sup_{i\in\N}\left\{Z^{\lambda,b}_i(y) + \cP[(\lambda+y,i)\to(\lambda+x,1)]\right\};
\end{align*}
for the final equality, we use that $\sup_{j\in\N} \sup_{i\leq j} a_{ij} = \sup_{i\in\N} \sup_{j\geq i} a_{ij}$ for any real collection $\{a_{ij}\}$, and the definition \eqref{e.Z_i^lambda defn} of $\smash{Z^{\lambda,b}_i}$. Observe from this manipulation that if the supremum in the first equality is attained at a finite index, then the same is true of the supremum in the final equality. Since we know from Lemma~\ref{l.Airy sheet to boundary LPP problem} that the first supremum is indeed attained at a finite index, the proof of Lemma~\ref{l.S Z^lambda representation} is complete.
\end{proof}


\section{Absolute continuity to Brownian motion and its running maximum}
\label{s.abs cont to br local time}

In this section we develop two main absolute continuity statements. The first compares $Z^{\lambda,b}_i$ to Brownian motion. The second is slightly more technical, and compares the running maximum of a process which is Brownian-like ``away from zero'' to the running maximum of Brownian motion. As indicated in Section~\ref{s.intro.proof strategy}, these will be needed in the proof of Theorem~\ref{t.patchwork}.

We will be making several absolute continuity statements in this section and the rest of the paper, and we introduce some convenient notation and terminology to streamline this. 
For two probability measures $\mu_1$ and $\mu_2$ on a measure space, $\mu_1\ll \mu_2$ will denote that $\mu_1$ is absolutely continuous with respect to $\mu_2$. We will often abuse notation and, for random variables $X_1$ and $X_2$, say that $X_1\ll X_2$ to mean that the distributions of $X_1$ and $X_2$ satisfy the corresponding relation; obviously, the joint distribution of $X_1$ and $X_2$ has no relevance to the statement. For two processes $X$ and $Y$ defined on $[0, \infty)$, if $X|_{[\varepsilon,T]}\ll Y|_{[\varepsilon, T]}$ for every $0<\varepsilon<T$, we will sometimes say that $X$ is \emph{locally absolutely continuous away from zero} to $Y$.

Fixing $\lambda\in\R$, recall the definition of $\smash{\cP^\lambda_i}$ from \eqref{e.R definition}. In Section~\ref{s.proofs}, to prove Theorem~\ref{t.patchwork} on the absolute continuity comparison of $\wdf$ and $\cL$, we will observe that $\wdf$ can be essentially written (i.e., if $\lambda$ is taken sufficiently negative, so that the infinite geodesic to $(x,1)$ is at line two or lower at $\lambda$), for $x\geq 0$, as
$$\wdf(\lambda+x) = \max_{0\leq s\leq x} \left(Z^{\lambda,b}_2(s) - \cP^\lambda_1(s)\right)$$
(see Section~\ref{s.intro.proof strategy} for an indication of why this would be true). The comparison of $\wdf$ to $\cL$ could be made if we knew that $\smash{(\cP^\lambda_1, Z^{\lambda,b}_2)}$ is absolutely continuous to independent Brownian motions, which would imply that the running maximum of $\smash{Z^{\lambda,b}_2 - \cP^\lambda_1}$ is absolutely continuous to the running maximum of Brownian motion (which has the same distribution as $\cL$ by L\'evy's identity, recalled ahead as Proposition~\ref{p.levy identity}). However, we will in fact only have the absolute continuity of $\smash{(\cP^\lambda_1, Z^{\lambda,b}_2)}$ to independent  Brownian motions away from zero, i.e., on $[\varepsilon, T]$ for every $\varepsilon>0$. But we have control over $\lambda$ and can shift it back so that the domain in which we are interested is indeed away from $\lambda$, in the sense that the maximum in the previous display is attained after $\varepsilon$ for every $x$ in a fixed interval with high probability; thus it is sufficient for us to prove an absolute continuity statement for the increment of the running maximum of a process $X$ under the hypothesis that $X$ is absolutely continuous to Brownian motion on its whole domain, which is straightforward.


We now give the precise versions of the two statements we described, and their proof is the main goal of this section. Even though our immediate need concerns only the absolute continuity of $(\cP^\lambda_1, Z^{\lambda,b}_2)$ to Brownian motion, the first result is stated more generally, and will also be of use in the general form for later arguments.


\begin{proposition}\label{p.Br abs cont of Z}
Let $j\in \N$ and $\lambda\in\R$. For any $0<\varepsilon<T$, we have that $(\cP^\lambda_1, \ldots, \cP^\lambda_{j-1}, Z^{\lambda, b}_j)|_{[\varepsilon,T]} \ll (B_1, \ldots, B_j)|_{[\varepsilon,T]}$, where $B_1, \ldots, B_j$ are independent rate two Brownian motions.
\end{proposition}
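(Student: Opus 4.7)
The plan is a three-step reduction to an absolute continuity statement for Brownian LPP with boundary data established in \cite{sarkar2020brownian}. The key idea is to truncate the supremum defining $Z^{\lambda,b}_j$ to finitely many indices on a high-probability event, then use the Brownian Gibbs property on the top finitely many curves to reduce to a Brownian environment.

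\textbf{Step 1 (Truncation).} First I would show that for any $T > 0$ there is an almost surely finite random index $K = K(T) \geq j$ such that the supremum defining $Z^{\lambda,b}_j(x)$ is attained at some $k \leq K$ for every $x \in [0, T]$. This is analogous to the finiteness of the maximizer in Lemma~\ref{l.Airy sheet to boundary LPP problem}, and the same argument from \cite{sarkar2020brownian} applies for general $\lambda$. Choosing $M = M(\delta)$ such that $\P(K \leq M) \geq 1 - \delta$, it suffices to prove the absolute continuity statement on $\{K \leq M\}$ for each $\delta > 0$ and let $\delta \to 0$; on this event,
\[ Z^{\lambda, b}_j(x) = \max_{j \leq k \leq M}\bigl\{b^\lambda_k + \cP[(\lambda, k) \to (\lambda + x, j)]\bigr\}. \]

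\textbf{Step 2 (Brownian Gibbs).} The differences $b^\lambda_k - b^\lambda_1$ for $j \leq k \leq M$ depend only on $\cP$ restricted to $(-\infty, \lambda]$ (through the limits in \eqref{e.a b definition}), hence are measurable with respect to the external $\sigma$-algebra $\F_{\mathrm{ext}}(M, \lambda, \lambda + T)$ of Definition~\ref{d.bg}. Conditional on $\F_{\mathrm{ext}}(M, \lambda, \lambda + T)$, the Brownian Gibbs property says the top $M$ curves on $[\lambda, \lambda + T]$ are $M$ independent rate-$2$ Brownian bridges conditioned on mutual non-intersection and on non-intersection with $\cP_{M+1}$. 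Non-intersection has positive conditional probability almost surely, and the right endpoints have a joint density by a further application of Brownian Gibbs on a larger interval (as in the proof of Corollary~\ref{c.abs cont}). Removing both conditions, we obtain that $(\cP^\lambda_1, \ldots, \cP^\lambda_M)|_{[0, T]}$ on $\{K \leq M\}$ is absolutely continuous to $M$ independent rate-$2$ Brownian motions, conditionally on the $\F_{\mathrm{ext}}$-measurable boundary data $\{b^\lambda_k - b^\lambda_1 : j \leq k \leq M\}$ and the constant shift $b^\lambda_1$.

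\textbf{Step 3 (Brownian LPP with boundary data).} Combining with Step 1, the proof reduces to the following deterministic statement: for independent rate-$2$ Brownian motions $B_1, \ldots, B_M$ on $[0, T]$ and real numbers $b_j \geq b_{j+1} \geq \ldots \geq b_M$,
\[ \Bigl(B_1, \ldots, B_{j-1},\ \sup_{j \leq k \leq M}\bigl\{b_k + B[(0, k) \to (\cdot, j)]\bigr\}\Bigr)\Big|_{[\varepsilon, T]} \]
is absolutely continuous to $j$ independent rate-$2$ Brownian motions on $[\varepsilon, T]$. Because the final coordinate depends only on $B_j, \ldots, B_M$, which are independent of $B_1, \ldots, B_{j-1}$, the joint absolute continuity factors and reduces to absolute continuity of the last coordinate alone to a single rate-$2$ Brownian motion on $[\varepsilon, T]$, which is (a version of) a result from \cite{sarkar2020brownian}. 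The main obstacle lies in this last step: as explained at the end of Section~\ref{s.intro.proof strategy}, the LPP value with boundary data in a Brownian environment picks up a running-maximum drift near $x = 0$ which is singular with respect to Brownian motion, so the comparison can only hold away from the origin---this is precisely why the restriction to $[\varepsilon, T]$ appears in the statement. Fortunately \cite{sarkar2020brownian} already handles this singularity, so on our end the work is mainly to set up the reduction cleanly via Brownian Gibbs and to verify the $\F_{\mathrm{ext}}$-measurability of the boundary data.
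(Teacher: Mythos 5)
Your proposal follows essentially the same route as the paper: truncate the supremum defining $Z^{\lambda,b}_j$ to finitely many indices on a high-probability event, condition on the data to the left of $\lambda$ and apply the Brownian Gibbs property to make $(\cP^\lambda_1,\ldots,\cP^\lambda_k)$ absolutely continuous to Brownian motions, and then invoke Theorem~4.3 of \cite{sarkar2020brownian} (Proposition~\ref{p.Br abs cont of LPP with boundary data} here) for the boundary-data LPP-to-Brownian comparison, using the independence of $B_j,\ldots,B_M$ from $B_1,\ldots,B_{j-1}$ to factor the joint law. Your Step 2 observation that only the \emph{differences} $b^\lambda_k - b^\lambda_1$ (and not $b^\lambda_k$ themselves) depend on $\cP$ restricted to $(-\infty,\lambda]$, with $b^\lambda_1$ a constant shift, is a correct and useful refinement that the paper treats somewhat more loosely.

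The one genuine gap is Step~1. You assert that the finiteness of the maximizing index for $Z^{\lambda,b}_j(x)$, uniformly over $x$ in a compact set, follows because ``the same argument from \cite{sarkar2020brownian} applies for general $\lambda$.'' But what \cite{sarkar2020brownian} gives (and what Lemma~\ref{l.Airy sheet to boundary LPP problem} records) is the finiteness of the maximizer only at the \emph{first} line, i.e.\ the $j=1$ case; the issue is not the extension to general $\lambda$ (which is indeed verbatim) but the extension to general $j\geq 2$, where the LPP problem terminates at line $j$ rather than line $1$. This is nontrivial: it is exactly Proposition~\ref{p.maximizer index is finite for general i}, whose proof occupies Section~\ref{s.Z^lambda index finite} and uses several new geometric lemmas (Lemmas~\ref{l.upper bound on i^th line starting index}, \ref{l.upper bound on i^lambda in bad case}, and \ref{l.i limit is infinite}) relating the maximizer of $Z^{\lambda,b}_j$ to where the $Z^{\lambda,b}_1$-geodesic exits line $j$. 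The paper explicitly adopts a strategy different from the one in \cite{sarkar2020brownian} for this step, precisely to avoid working with infinite geodesics. So your proposal is missing the piece of the argument that is the paper's main technical contribution to this proposition; once that input is supplied, the remainder of your argument is correct and matches the paper's.
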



\begin{lemma}\label{l.abs cont of max increment}
Let $X:[0,\infty)\to \R$ be a process such that $X(\cdot)-X(0)|_{[0,T]} \ll B|_{[0,T]}$ for all $T>0$, where $B$ is a Brownian motion of rate $\sigma^2$ on $[0,\infty)$ started at zero. Let $M^{X, \mathrm{inc}}(t) = \max_{0\leq s\leq t} X(s) - X(0)$ (where $\mathrm{inc}$ indicates that it is the running maximum of the increment), and $M^B$ be defined analogously. Then for any $[c,d]\subset [0,\infty)$, $M^{X,\mathrm{inc}}(\cdot) - M^{X,\mathrm{inc}}(c)|_{[c,d]} \ll M^{B}(\cdot)- M^B(c)|_{[c,d]}$.
\end{lemma}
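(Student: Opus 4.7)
The plan is to realize the increment of the running maximum as a measurable (in fact continuous) functional of the path, and then invoke the elementary fact that pushforwards preserve absolute continuity.

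Concretely, define the map $\Phi : \mathcal{C}([0,d]) \to \mathcal{C}([c,d])$ by
\[
\Phi(f)(t) = \max_{0\le s\le t} f(s) - \max_{0\le s\le c} f(s), \qquad t\in [c,d].
\]
Two observations about $\Phi$. First, $\Phi$ is invariant under adding constants to $f$, since both maxima pick up the same constant shift. In particular, $\Phi(f) = \Phi(f - f(0))$. Second, $\Phi$ is continuous in the sup norm (for $f,g \in \mathcal{C}([0,d])$ one has $\|\Phi(f) - \Phi(g)\|_\infty \le 2\|f-g\|_\infty$), hence Borel measurable.

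With this setup, $\Phi(X|_{[0,d]}) = M^{X,\mathrm{inc}}(\cdot) - M^{X,\mathrm{inc}}(c)|_{[c,d]}$ by definition, and the shift invariance lets me rewrite this as $\Phi((X-X(0))|_{[0,d]})$. Analogously, $\Phi(B|_{[0,d]}) = M^B(\cdot) - M^B(c)|_{[c,d]}$. The conclusion then follows from the general measure-theoretic fact that if $\mu \ll \nu$ on a measurable space $(\Omega_1,\mathcal{G}_1)$ and $\Phi : \Omega_1 \to \Omega_2$ is measurable, then $\Phi_*\mu \ll \Phi_*\nu$ on $\Omega_2$: any $\Phi_*\nu$-null set $A$ pulls back to the $\nu$-null set $\Phi^{-1}(A)$, which is also $\mu$-null, so $\Phi_*\mu(A)=0$. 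Apply this with $\mu$ the law of $(X-X(0))|_{[0,d]}$ and $\nu$ the law of $B|_{[0,d]}$; the hypothesis $\mu\ll\nu$ is given by taking $T=d$ in the assumption on $X$.

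I do not anticipate a serious obstacle here: all the probabilistic content of the lemma is already embedded in the hypothesis $X - X(0) \ll B$, and the only mild point to be careful about is the shift invariance of $\Phi$, so that the hypothesis --- stated in terms of the increment $X-X(0)$ rather than $X$ itself --- is actually applicable. Everything else is packaging.
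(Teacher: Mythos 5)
Your argument is correct and is essentially the paper's proof made explicit: the paper likewise observes that $M^{X,\mathrm{inc}}(\cdot)-M^{X,\mathrm{inc}}(c)$ is a measurable function of $(X(\cdot)-X(0))|_{[0,d]}$ and concludes by transferring the absolute continuity hypothesis through that map. You have merely named the map $\Phi$, checked its continuity and shift-invariance, and spelled out the elementary pushforward fact that the paper leaves implicit.
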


While $M^{X,\mathrm{inc}}(\cdot) - M^{X,\mathrm{inc}}(c)$
 no longer involves $X(0)$, we include it in the definition of $M^{X,\mathrm{inc}}(\cdot)$ because the hypothesis is stated in terms of $X(\cdot)- X(0)$; indeed, for our application, the input absolute continuity to Brownian motion will be true only after subtracting the initial value.




We prove Proposition~\ref{p.Br abs cont of Z} in Section~\ref{s.abs cont of boundary data process} and Lemma~\ref{l.abs cont of max increment} in Section~\ref{s.pitman trans abs cont}.

\subsection{Brownian absolute continuity of boundary data processes}\label{s.abs cont of boundary data process}

We will make use of the following statement, which was proved in \cite{sarkar2020brownian}, to prove Proposition~\ref{p.Br abs cont of Z}.

\begin{proposition}[Theorem~4.3 of \cite{sarkar2020brownian}]\label{p.Br abs cont of LPP with boundary data}
Let $n\in\N$ and $B = (B_1, \ldots, B_n)$ be a collection of $n$ independent rate two Brownian motions. Let $b=(b_1, \ldots,  b_n)\in\R^n$ be a deterministic vector. Define
$$\mc H_{n, b}(x) = \max_{1\leq i\leq n}\Bigl\{b_i + B[(0,i)\to(x,1)] \Bigr\}.$$
Then $\mc H_{n,b}|_{[\varepsilon,T]} \ll B'|_{[\varepsilon,T]}$, where $0<\varepsilon<T$ and $B'$ is a rate two Brownian motion.
\end{proposition}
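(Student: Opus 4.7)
The plan is to split the argument into two stages: first handle the case $n=2$ by an explicit transition-density computation, then extend to $n\ge 3$ by induction.

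\textbf{Stage 1 ($n=2$).} Here
\[
\mc H_{2,b}(x) \;=\; B_1(x) + \max\Bigl(b_1,\ b_2 + \sup_{0\le s\le x}\{B_2(s)-B_1(s)\}\Bigr),
\]
which by an iterated Pitman/RSK transform has the same law as the top line of the two-line Dyson Brownian motion started from the ordered pair derived from $(b_1,b_2)$. The Karlin--McGregor formula yields an explicit transition density for this Markov process that is smooth, strictly positive, and locally bounded with respect to the rate-two Brownian transition density away from the entrance point. A direct check of finite-dimensional distributions at times $\varepsilon = t_0 < t_1 < \cdots < t_k \le T$, followed by a standard cylinder-set/monotone-class argument, promotes this to path-space absolute continuity on $[\varepsilon,T]$.

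\textbf{Stage 2 (induction for $n\ge 3$).} Any up-right path from $(0,i)$ to $(x,1)$ with $i\ge 2$ exits line $2$ into line $1$ at some time $s\in[0,x]$, so $B[(0,i)\to(x,1)] = \sup_{0\le s\le x}\{B[(0,i)\to(s,2)] + B_1(x)-B_1(s)\}$. Taking the maximum over $i$ with boundary shifts and pulling out $B_1(x)$ gives
\[
\mc H_{n,b}(x) \;=\; B_1(x) + \max\Bigl(b_1,\ \sup_{0\le s\le x}\{g(s) - B_1(s)\}\Bigr),
\]
where $g(s) := \max_{2\le i\le n}\{b_i + B[(0,i)\to(s,2)]\}$ is the $(n-1)$-line analogue of $\mc H$ built from $(B_2,\ldots,B_n)$ with boundary $(b_2,\ldots,b_n)$. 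By the inductive hypothesis, $g|_{[\varepsilon',T]} \ll (\text{rate two BM})|_{[\varepsilon',T]}$ for every $0<\varepsilon'<T$, and $g$ is independent of $B_1$.

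The obstacle is that $g - B_1$ is absolutely continuous to rate four BM only away from zero, so the running supremum on $[0,x]$ could harbour singular behaviour near the origin. I would localize by fixing $\varepsilon'\in(0,\varepsilon/2)$ and rewriting, for $x\in[\varepsilon,T]$,
\[
\mc H_{n,b}(x) \;=\; B_1(x) + \max\Bigl(\alpha,\ \sup_{\varepsilon'\le s\le x}\{g(s)-B_1(s)\}\Bigr),\qquad \alpha := \max\Bigl(b_1,\ \sup_{0\le s\le \varepsilon'}\{g-B_1\}\Bigr),
\]
with $\alpha$ measurable with respect to $\mc F_{\varepsilon'} := \sigma(g|_{[0,\varepsilon']},B_1|_{[0,\varepsilon']})$. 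After conditioning on $\mc F_{\varepsilon'}$, the quantities $\alpha$, $g(\varepsilon')$, $B_1(\varepsilon')$ become deterministic parameters; the Markov property of $B_1$, the inductive AC of $g|_{[\varepsilon',T]}$, and the independence of $g$ and $B_1$ together imply that the conditional law of $(B_1,g)$ on $[\varepsilon',T]$ is absolutely continuous to that of a pair of independent rate two Brownian motions with those initial values. Integrating back over $\mc F_{\varepsilon'}$ preserves absolute continuity.

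This reduces the claim to: for independent rate two BMs $W_1,W_2$ on $[\varepsilon',T]$ with deterministic starting values and deterministic $\alpha$, the process $x\mapsto W_1(x) + \max(\alpha,\sup_{\varepsilon'\le s\le x}\{W_2(s)-W_1(s)\})$ is AC to rate two BM on $[\varepsilon,T]$. With the stopping time $T_\alpha := \inf\{s\ge\varepsilon' : W_2(s)-W_1(s)>\alpha\}$, three cases cover everything: on $\{T_\alpha>T\}$ the process equals $W_1+\alpha$, which is rate two BM up to a shift; on $\{T_\alpha\le\varepsilon\}$ it equals $W_1(x)+\sup_{\varepsilon'\le s\le x}\{W_2-W_1\}$, a translated two-line LPP to which Stage 1 applies; and on $\{\varepsilon<T_\alpha<T\}$ one concatenates the two regimes via the strong Markov property at $T_\alpha$ and again invokes Stage 1. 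The main obstacle of the whole argument is thus Stage 1: the explicit two-line transition-density computation that makes the base case rigorous, after which the reductive machinery is essentially combinatorial.
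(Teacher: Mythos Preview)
The paper does not prove this proposition; it is quoted verbatim from \cite{sarkar2020brownian} (their Theorem~4.3). The only thing the paper says about the proof is the remark following the statement: Sarkar--Vir\'ag first prove that the Pitman transform of \emph{any} two processes that are locally absolutely continuous away from zero to independent Brownian motions is itself locally absolutely continuous away from zero to Brownian motion, and then iterate using the composition/recursion structure of $\mc H_{n,b}$. Your inductive scheme and recursion $\mc H_{n,b}(x)=B_1(x)+\max(b_1,\sup_{0\le s\le x}\{g(s)-B_1(s)\})$ match this skeleton exactly.

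The substantive difference is in how the two approaches package the two-line step. Sarkar--Vir\'ag state the base lemma for general inputs $(f_1,f_2)$ that are merely absolutely continuous to Brownian motions away from zero; this formulation feeds directly into the induction, since $g$ is exactly such a process by hypothesis. Your Stage~1 instead proves the $n=2$ case only for genuine Brownian inputs (via the Dyson identification and Karlin--McGregor densities, which is legitimate), and then in Stage~2 you try to reduce the general inputs back to Brownian ones by conditioning on $\mc F_{\varepsilon'}=\sigma(g|_{[0,\varepsilon']},B_1|_{[0,\varepsilon']})$.

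That conditioning step is the gap. The inductive hypothesis gives only the \emph{unconditional} absolute continuity $g|_{[\varepsilon',T]}\ll B'|_{[\varepsilon',T]}$; it does not give that the \emph{conditional} law of $g|_{[\varepsilon',T]}$ given $g|_{[0,\varepsilon']}$ is absolutely continuous to a Brownian motion started at $g(\varepsilon')$. The process $g$ is not Markov in its own filtration (it is a functional of $(B_2,\dots,B_n)$), so conditioning on its past does not leave you with a Brownian-like future. One repair is to condition instead on the full $\sigma((B_1,\dots,B_n)|_{[0,\varepsilon']})$: then $g|_{[\varepsilon',\cdot]}$ is again an $(n-1)$-line LPP with new (measurable) boundary data at time $\varepsilon'$, and you may re-invoke the inductive hypothesis---but only away from $\varepsilon'$, so you must iterate the splitting at a second scale $\varepsilon''\in(\varepsilon',\varepsilon)$. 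This works but is exactly the bookkeeping that the Sarkar--Vir\'ag lemma absorbs into a single clean statement about Pitman transforms of locally-Brownian inputs. In short: your architecture is the same as the cited proof, but the modular ``Pitman preserves local AC'' lemma is doing real work that your conditioning shortcut does not cover as written.
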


The proof of Proposition~\ref{p.Br abs cont of LPP with boundary data} in \cite{sarkar2020brownian} goes essentially via first proving the absolute continuity away from zero to Brownian motion of the \emph{Pitman transform} of two functions which are themselves absolutely continuous away from zero to Brownian motion; in the case that the functions are exactly Brownian motion, the Pitman transform is the $n=2$ and $b=0$ case of $\mc H_{n,b}$. Then on iterating the argument and making use of a composition property, the general $n$ case is obtained. We will introduce a variant of their Pitman transform in Section~\ref{s.skorohod representation}.





To verify the absolute continuity required to apply Proposition~\ref{p.Br abs cont of LPP with boundary data} to prove Proposition~\ref{p.Br abs cont of Z}, we would need to know that the index at which the supremum in the definition \eqref{e.Z_i^lambda defn} of $\smash{Z^{\lambda,b}_i}$ is achieved is almost surely finite. This is the content of the following proposition.

\begin{proposition}\label{p.maximizer index is finite for general i}
Almost surely, for all $x\geq 0$ and $j\in\N$, the supremum in the definition \eqref{e.Z_i^lambda defn} of $\smash{Z^{\lambda, b}_j(x)}$ is attained at a finite index. The index is non-decreasing in $x$; in particular, it is uniformly bounded for $x$ in any given compact set.
\end{proposition}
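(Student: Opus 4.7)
The proposition has two parts: (a) for each fixed $x \geq 0$ and $j \in \N$, the supremum in the definition of $Z^{\lambda,b}_j(x)$ is attained at an a.s.\ finite minimal maximizer, which I will denote $I^\lambda_j(x)$; and (b) $x \mapsto I^\lambda_j(x)$ is non-decreasing on the set where it is finite. Uniform boundedness on compact sets follows immediately from the combination of the two.

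For (b), the plan is to mirror the proof of Lemma~\ref{l.i j properties}(ii) with $j$ in place of $1$. Supposing for contradiction that $x_1 < x_2$ with $I^\lambda_j(x_1) = i_1 > i_2 = I^\lambda_j(x_2)$ both finite, the minimality of $i_1, i_2$ gives the pair
\[
b_{i_1}^\lambda + \cP^\lambda_{i_1 \srt j}(x_1) > b_{i_2}^\lambda + \cP^\lambda_{i_2 \srt j}(x_1), \qquad b_{i_2}^\lambda + \cP^\lambda_{i_2 \srt j}(x_2) \geq b_{i_1}^\lambda + \cP^\lambda_{i_1 \srt j}(x_2),
\]
the first strict; summing yields an inequality directly contradicting Lemma~\ref{l.crossing} applied with starts $(\lambda, i_2) \leq (\lambda, i_1)$ and ends $(\lambda + x_1, j) \leq (\lambda + x_2, j)$.

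For (a), the approach is to bootstrap from the $j=1$ case supplied by Lemma~\ref{l.Airy sheet to boundary LPP problem}. Let $i^*(x) = i^\lambda_{y_b}(x)$ be its a.s.\ finite minimal maximizer, and assume temporarily that $i^*(x) \geq j$. The path achieving $Z^{\lambda,b}_1(x) = \S(y_b,\lambda+x)$ from $(\lambda, i^*(x))$ to $(\lambda+x,1)$ then necessarily occupies line $j$ on a (possibly degenerate) subinterval $[s,s'] \subseteq [\lambda,\lambda+x]$, and since the restriction to $[\lambda,t]$ of an optimal path is itself optimal between its endpoints, one gets the weight decomposition
\[
\S(y_b,\lambda+x) = b_{i^*(x)}^\lambda + \cP^\lambda_{i^*(x)\srt j}(t-\lambda) + \bigl[\cP_j(s') - \cP_j(t) + \cP[(s', j-1) \to (\lambda+x, 1)]\bigr]
\]
valid at every $t \in [s, s']$. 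Applying Lemma~\ref{l.S Z^lambda representation} with $y = t-\lambda$ (which lower bounds the left side by $Z^{\lambda,b}_j(t-\lambda) + \cP[(t,j)\to(\lambda+x,1)]$), and observing that $\cP[(t,j)\to(\lambda+x,1)]$ majorizes the bracketed quantity by considering the specific path that stays on line $j$ from $t$ to $s'$ and then jumps, forces $b_{i^*(x)}^\lambda + \cP^\lambda_{i^*(x) \srt j}(t-\lambda) = Z^{\lambda,b}_j(t-\lambda)$ throughout $[s,s']$. Combined with the trivial $I^\lambda_j(0) = j$ (since $b^\lambda_i$ is non-increasing in $i$), the monotonicity in (b) extends finiteness of $I^\lambda_j$ back to the whole interval $[0, s'-\lambda]$, and letting $x$ range over a countable dense set together with continuity of each $\cP^\lambda_{i\srt j}(\cdot)$ and of $Z^{\lambda,b}_j$ promotes the finiteness to hold simultaneously for all $x\geq 0$ in the relevant range.

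The main obstacle is covering values of $x$ that exceed $s'(x)-\lambda$ for every decomposition used so far, and symmetrically the atypical regime $i^*(x)<j$ where the geodesic of the $j=1$ problem never visits line $j$. Both are handled by taking $x'\gg x$ with $i^*(x')\geq j$ and applying the argument at $x'$ so that the resulting interval $[0,s'(x')-\lambda]$ contains $x$; for this step to go through, one needs the auxiliary fact that $i^*(x)\to\infty$ and $s'(x)-\lambda\to\infty$ as $x\to\infty$, which I would establish via a short separate argument ruling out the event that $i^*(\cdot)$ is bounded by comparing the parabolic decay of $\S(y_b,\lambda+\cdot)$ against the at-most-Brownian fluctuation of any finite maximum of terms $b^\lambda_i+\cP^\lambda_{i\srt 1}(\cdot)$.
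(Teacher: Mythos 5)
Your treatment of part (b), the non-decreasingness of the minimal maximizer in $x$, is correct and identical in substance to the paper's: the paper also invokes the same planarity argument as in Lemma~\ref{l.i j properties}(ii). The core weight-decomposition argument for part (a) is also correct and is, in effect, the content of the paper's Lemma~\ref{l.upper bound on i^th line starting index}: decomposing the $j=1$ geodesic at its passage through line $j$ and applying Lemma~\ref{l.S Z^lambda representation} at the split point forces a finite maximizer for $Z^{\lambda,b}_j$ there.

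The gap is in your ``main obstacle'' paragraph. You correctly flag that you need \emph{both} $i^*(x')\to\infty$ and $s'(x')-\lambda\to\infty$ as $x'\to\infty$, but the ``short separate argument'' you sketch only addresses the former, namely ruling out boundedness of the starting line index $i^*(\cdot)$. That is not enough: a geodesic with a very high starting index could in principle jump all the way to line $j$ almost immediately, so $i^*(x')\to\infty$ does not on its own preclude $s'(x')$ from remaining bounded. What you actually need is control of the line index of the $j=1$ geodesic $\pi^{x'}$ at the \emph{intermediate} location $\lambda+x$, i.e., that this intermediate index also goes to infinity with $x'$. The paper supplies exactly this through Lemma~\ref{l.upper bound on i^lambda in bad case}, which shows that the line index of $\pi^{x'}$ at $\lambda+x$ is at least $i^{\lambda+x}(x'-x)$, the starting line index of the analogous boundary-data problem posed from the \emph{shifted} basepoint $\lambda+x$; the unboundedness statement (Lemma~\ref{l.i limit is infinite}) is then invoked at $\lambda+x$, not at $\lambda$. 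Your proposal contains no analogue of this shifted-basepoint step.

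As a side remark: the unboundedness argument you sketch for $i^*$ does rigorize nicely---using that $\wdf(\lambda+z)=\S(y_b,\lambda+z)-\cP_1(\lambda+z)\to\infty$ (Lemma~\ref{l.strong non-degeneracy of E}) while $\cP^\lambda_{i\srt 1}(z)\leq \cP_1(\lambda+z)-\cP_i(\lambda)$ by the ordering of the curves, so a bounded starting index would force $\wdf$ to stay bounded---and is arguably more direct than the paper's proof of Lemma~\ref{l.i limit is infinite} (which is a pigeonhole over starting points $y$ combined with the $y$-monotonicity of Lemma~\ref{l.i j properties}(i)), though both ultimately rest on Lemma~\ref{l.strong non-degeneracy of E}. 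In fact the same comparison extends to the intermediate index: if the line index of $\pi^z$ at $\lambda+x$ stayed bounded by $M$ for all $z$, then $\wdf(\lambda+z)\leq Z^{\lambda,b}_1(x)-\cP_M(\lambda+x)$ for all $z$, again contradicting Lemma~\ref{l.strong non-degeneracy of E}. But that additional step is exactly what your proposal is missing, and is what the paper delivers via Lemma~\ref{l.upper bound on i^lambda in bad case} together with Lemma~\ref{l.i limit is infinite}.
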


While we prove Proposition~\ref{p.maximizer index is finite for general i} in Section~\ref{s.Z^lambda index finite}, we first use it to give the proof of Proposition~\ref{p.Br abs cont of Z}.

\begin{proof}[Proof of Proposition~\ref{p.Br abs cont of Z}]
Proposition~\ref{p.maximizer index is finite for general i} implies that the index achieving the supremum in the definition \eqref{e.Z_i^lambda defn} of $\smash{Z^{\lambda, b}_i(x)}$ is at most a random finite constant $K$ for all $x\in[\varepsilon/2,T]$. On $\{K=k\}$, $\smash{Z^{\lambda, b}_j}$ is a function of $\smash{\cP^\lambda_j, \ldots, \cP^\lambda_k}$: it is equal to the function 
$$x\mapsto \max_{j\leq i\leq k}\Bigl\{ b^\lambda_i + \cP[(\lambda, i) \to (x,j)]\Bigr\}.$$
Condition on the $\sigma$-algebra generated by $\{\cP_i(x): i\in\N, x\leq \lambda\}$. By the Brownian Gibbs property, under this conditioning, $(\cP^\lambda_1, \ldots, \cP^\lambda_k)$ is absolutely continuous to $k$ independent rate two Brownian motions on $[0, T]$. Now, define
$$\mc H_{j,k, b^\lambda}(x) := \max_{j\leq i\leq k}\Bigl\{ b^\lambda_i + B[(\lambda, i) \to (x,j)]\Bigr\},$$
where $B$ is a collection of independent rate two Brownian motions.

Under the above conditioning, for any $k\in\N$, on the event $\{K=k\}$, we have 
$$(\cP^\lambda_1, \ldots, \cP^\lambda_{j-1}, Z^{\lambda,b}_{j}) \ll (B_1, \ldots, B_{j-1}, \mc H_{j,k, b^\lambda}).$$
Since $\mc H_{j,k, b^\lambda}$ is independent of $(B_1, \ldots, B_{j-1})$, and by Proposition~\ref{p.Br abs cont of LPP with boundary data} we have that $\mc H_{j,k,b^\lambda}$ is absolutely continuous away from zero to Brownian motion. Removing the conditioning and doing a union bound over $k\in\N$ completes the proof of Proposition~\ref{p.Br abs cont of Z}.
\end{proof}

\subsection{Absolute continuity of running maximum processes to Brownian local time}
\label{s.pitman trans abs cont}

The proof of Lemma~\ref{l.abs cont of max increment} is straightforward and we turn to it now.

\begin{proof}[Proof of Lemma~\ref{l.abs cont of max increment}]
Observe that, for $t\geq c$,
$$M^{X, \mathrm{inc}}(t) - M^{X, \mathrm{inc}}(c) = \max_{0\leq s \leq t} \Bigl(X(s) - X(0)\Bigr) - \max_{0\leq s \leq c} \Bigl(X(s) - X(0)\Bigr)$$
Since the righthand side is a function of $(X(\cdot)-X(0))|_{[0,d]}$, which is absolutely continuous to a Brownian motion on $[0,d]$, we see that the righthand side is absolutely continuous to the process (on $[c,d]$)
$$t\mapsto \max_{0\leq s \leq t} B(s) - \max_{0\leq s \leq c} B(s),$$
as was to be shown.
\end{proof}

\section{Proofs of the absolute continuity and Hausdorff dimension results}\label{s.proofs}

We start by formally defining Brownian local time (at zero), which, heuristically, measures the amount of time Brownian motion spends at the origin.

\begin{definition}[Brownian local time]\label{d.local time}
Let $B:[0,\infty)\to\R$ be a one-dimensional Brownian motion of rate $\sigma^2$ started at the origin. The associated Brownian local time at zero $\cL:[0,\infty)\to [0,\infty)$ of rate $\sigma^2$ is defined as
$$\cL(t) = \lim_{\varepsilon\downarrow 0}\frac{1}{2\varepsilon} \int_0^t \one_{-\varepsilon < B(s) < \varepsilon}\, \mathrm ds.$$
\end{definition}

In our arguments, we will generally relate last passage values to the running maximum process of various processes which are absolutely continuous to Brownian motion. We move from these running maximum processes to the local time process of Brownian motion via the famous identity of L\'evy; see for example \cite[Theorems~6.16 and 7.38]{morters2010brownian}.

\begin{proposition}[L\'evy's identity]\label{p.levy identity}
Let $B:[0,\infty)\to\R$ be Brownian motion of rate $\sigma^2$ started at the origin. Let $\cL:[0,\infty)\to [0,\infty)$ be its associated local time at zero and $M:[0,\infty)\to [0,\infty)$ be its running maximum process, i.e., $M(t) = \sup_{s\leq t} B(s)$. {Then $(\cL, |B|)$ and $(M, M-B)$ are equal in law as processes on $[0,\infty)$.}
\end{proposition}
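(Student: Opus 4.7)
The plan is to use the It\^o--Tanaka formula to write $|B(t)|$ as the sum of a Brownian motion and $\cL(t)$, invoke L\'evy's martingale characterization to recognize the martingale part as itself being a Brownian motion, and finally apply the Skorokhod reflection lemma to identify $\cL$ as a running maximum. This converts the pathwise identity from Tanaka's formula into the distributional statement we want.

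First I would apply Tanaka's formula to the convex function $x\mapsto |x|$, which yields
\[
|B(t)| = W(t) + \cL(t), \qquad W(t) := \int_0^t \mathrm{sgn}(B(s))\, dB(s),
\]
where $\cL$ is the local time at the origin appearing in the Doob--Meyer decomposition of the submartingale $|B|$; matching this to the occupation-density definition in Definition~\ref{d.local time} goes through the occupation time formula. Since the zero set of $B$ has Lebesgue measure zero almost surely, the quadratic variation of $W$ is $\int_0^t \mathrm{sgn}(B(s))^2 \sigma^2 \, ds = \sigma^2 t$, so by L\'evy's characterization, $W$ is itself a Brownian motion of rate $\sigma^2$.

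Setting $X := -W$ (again a rate $\sigma^2$ Brownian motion started at $0$), the identity rearranges to $\cL(t) - X(t) = |B(t)| \geq 0$. Because $\cL$ is continuous and non-decreasing with $\cL(0) = 0$, and $d\cL$ is supported on $\{t : B(t) = 0\} = \{t : \cL(t) = X(t)\}$, the Skorokhod reflection lemma forces $\cL(t) = \max_{0 \leq s \leq t} X(s)$ for all $t \geq 0$, almost surely. Consequently
\[
\bigl(\cL(t),\, |B(t)|\bigr) = \bigl(\max_{0\leq s \leq t} X(s),\, \max_{0\leq s \leq t} X(s) - X(t)\bigr)
\]
pathwise, and since $X$ has the same law as $B$ as a process, taking distributions gives the asserted equality with $(M, M - B)$. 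The one place where genuine care is needed is the identification of the local time appearing in Tanaka's formula with the occupation-density $\cL$ of Definition~\ref{d.local time}, together with the precise invocation of Skorokhod reflection (including verifying that $d\cL$ charges only $\{B=0\}$); both are standard but are the technical heart of the argument.
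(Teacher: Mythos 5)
The paper does not supply a proof of this proposition; it merely cites \cite[Theorems~6.16 and 7.38]{morters2010brownian}, so there is no in-paper argument to compare against. Your route—Tanaka's formula to decompose $|B|$, L\'evy's characterization to recognize the martingale part as Brownian, and the Skorokhod reflection lemma to identify $\cL$ as the running maximum of that Brownian motion—is the standard stochastic-calculus proof, and the Skorokhod step is carried out correctly (you verify $\cL$ is non-decreasing, vanishes at $0$, and increases only on $\{|B|=0\}$, which pins down $\cL(t)=\max_{0\leq s\leq t}X(s)$ by uniqueness).

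There is, however, a genuine normalization gap in the step you defer as routine, and it is not merely cosmetic. Tanaka's formula $|B(t)| = \int_0^t \mathrm{sgn}(B(s))\,dB(s) + L_t$ holds with $L$ the \emph{semimartingale} local time, which satisfies the occupation-time identity $\int_0^t f(B_s)\,d\langle B\rangle_s = \int_\R f(a)\,L^a_t\,da$. For $B$ of rate $\sigma^2$ one has $d\langle B\rangle_s = \sigma^2\,ds$, so $L_t = \sigma^2\,\cL(t)$ where $\cL$ is the occupation-density quantity of Definition~\ref{d.local time}, which is normalized against Lebesgue measure $ds$ rather than against $d\langle B\rangle_s$. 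Your display $|B(t)| = W(t) + \cL(t)$ is therefore incorrect by a factor of $\sigma^2$ unless $\sigma=1$; carrying out the ``matching'' you flag would expose this. One can also see the mismatch directly: for $B=\sigma W$ with $W$ standard, $M^B(t)=\sigma M^W(t)$ while $\cL(t)=\sigma^{-1}\cL^W(t)$, so $\cL\overset{d}{=}M^B$ fails for $\sigma\neq 1$. In other words, Proposition~\ref{p.levy identity} is incompatible with Definition~\ref{d.local time} as literally written. Since the paper's downstream use of $\cL$ (the L\'evy identity step in the proof of Theorem~\ref{t.patchwork}, where the running maximum of a rate-four Brownian motion is identified with $\cL$ of rate four) tacitly uses the semimartingale normalization, the right fix is to amend Definition~\ref{d.local time} to integrate against $d\langle B\rangle_s = \sigma^2\,ds$ rather than $ds$; with that amendment your argument is complete and correct.
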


\subsection{Non-degeneracy of $\NC(\wdf)$}
In the proof of Theorem~\ref{t.patchwork} we will need that the range of $\wdf$ is $\R$, which also implies that $\NC(\wdf)$ (which recall is the set of non-constant points of $\wdf$) is non-empty. These facts were essentially proven in \cite[Proposition~3.10]{basu2019fractal} in a pre-limiting setting and we reproduce the argument for completeness. 

\newcommand{\Sstat}{\S^\cup}

\begin{lemma}\label{l.strong non-degeneracy of E}
With probability one, $\lim_{M\to\infty} \wdf(M) = \infty$ and $\lim_{M\to-\infty} \wdf(M) = -\infty$.
\end{lemma}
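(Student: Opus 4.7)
The plan is to separate $\wdf$ into a deterministic linearly-growing piece and a stationary fluctuation piece, then show that the fluctuations are of smaller order than the deterministic piece.

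Using $y_a = 0$ and the coupling identity $\S(0,\cdot)=\cP_1(\cdot)$ from Definition~\ref{d.airy sheet}, I write $\wdf(x) = \S(y_b, x) - \cP_1(x)$ and complete the squares to decompose
\[
\wdf(x) \;=\; A(x) - B(x) + \bigl(2 x y_b - y_b^2\bigr),
\]
where $A(x) := \S(y_b, x) + (x-y_b)^2$ and $B(x) := \cP_1(x) + x^2 = \A_1(x)$. Since $y_b > 0$, the affine term $2 x y_b - y_b^2$ tends to $+\infty$ as $x \to +\infty$ and to $-\infty$ as $x \to -\infty$, so it suffices to show that both $A$ and $B$ are almost surely $o(x)$ as $|x|\to\infty$.

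The key structural observation is that $A$ and $B$ are each (distributed as) stationary processes. For $B = \A_1$, this is the standard fact that the non-parabolic Airy$_2$ process is translation-stationary. For $A$, applying the diagonal stationarity in part (i) of Definition~\ref{d.airy sheet} with $z = -y_b$ yields the equality of processes $\{\S(y_b, x)\}_{x\in\R} \stackrel{d}{=} \{\cP_1(x - y_b)\}_{x\in\R}$, and hence $A(\cdot) \stackrel{d}{=} \A_1(\cdot - y_b)$, which is stationary. Consequently the random variables $\sup_{x \in [n, n+1]} |A(x)|$ are identically distributed as $n$ ranges over $\Z$, and similarly for $B$.

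The conclusion would then follow from a Borel--Cantelli argument. Combining the Tracy--Widom GUE tail bound at a point, $\P(|\A_1(0)| > t) \leq C e^{-c t^{3/2}}$, with a standard local modulus-of-continuity estimate on a unit interval (which for $B$ follows from the Brownian Gibbs property via Corollary~\ref{c.abs cont}, and which transfers to $A$ via the distributional identity above), I would obtain a unit-interval tail bound of the form $\P(\sup_{x \in [n, n+1]} |A(x)| > t) \leq C e^{-c t^{3/2}}$, and likewise for $B$. Choosing $t_n := C'(\log |n|)^{2/3}$ with $C'$ sufficiently large makes these probabilities summable over $n \in \Z$, so Borel--Cantelli gives
\[
\sup_{x \in [n, n+1]} \bigl(|A(x)| + |B(x)|\bigr) \;=\; O\bigl((\log |n|)^{2/3}\bigr)
\]
almost surely, which is $o(x)$ as $|x| \to \infty$. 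The affine term therefore dominates and the lemma follows. The main technical step is packaging the pointwise Tracy--Widom tail together with local regularity into the displayed unit-interval tail bound; this is routine given the Brownian Gibbs property but is the one place where care is needed.
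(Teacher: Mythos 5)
Your proposal is correct in outline but takes a genuinely different route from the paper. Both proofs begin by absorbing the parabola: the paper writes $\wdf(M)=\Sstat(y_b,M)-\Sstat(y_a,M)-(y_b^2-y_a^2)+2M(y_b-y_a)$ with $\Sstat(y,x)=\S(y,x)+(y-x)^2$, which is the same affine-plus-fluctuation decomposition you obtain by completing the square. From there the approaches diverge. The paper's argument is deliberately minimal: it uses only that $\Sstat(y,x)$ has the \emph{same one-point marginal} for all $(y,x)$ (no process-level stationarity, no tail bounds, no modulus estimate), combined with the fact from Lemma~\ref{l.W increasing} that $\wdf$ is non-decreasing. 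Monotonicity means $\lim_{M\to\infty}\wdf(M)$ exists and equals the $\limsup$, so it suffices to show $\P(\wdf(M)>2M^{1/2})\to 1$, which follows from tightness of a fixed one-point distribution against the linearly growing drift. Your proof does not invoke monotonicity at all; instead you establish that $A$ and $B$ are, in distribution, genuinely \emph{stationary processes} (which is correct, via part (i) of Definition~\ref{d.airy sheet} with $z=-y_b$), and then run a Borel--Cantelli argument over unit intervals to get $\sup_{[n,n+1]}(|A|+|B|)=O((\log|n|)^{2/3})$ almost surely. This buys you a quantitative almost-sure growth bound on the fluctuations, which the paper's argument does not deliver, at the cost of requiring a unit-interval tail estimate. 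The one gap is in that step: Corollary~\ref{c.abc cont} gives absolute continuity of $\cP_1$ (after subtracting an endpoint value) to Brownian motion on $[\ell,r]$, but absolute continuity alone carries no quantitative rate and does not by itself upgrade the pointwise Tracy--Widom tail to a tail bound for $\sup_{[n,n+1]}|\A_1|$. To make that step rigorous you would need either a quantitative Brownian comparison (e.g., the moment bounds on the Radon--Nikodym derivative from \cite{hammond2016brownian} or \cite{calvert2019brownian}) or to cite the known one-sided and two-sided sup-tail bounds for the Airy$_2$ process directly. You flag this as the place where care is needed, and indeed it is the only nontrivial input your route requires beyond what the paper's shorter argument uses.
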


\begin{proof}
The proof of the second limit is similar to that of the first, and we prove only the first here.

Let $\Sstat$ be the parabolic Airy sheet with the parabola compensated for, i.e., $\Sstat(y,x) = \S(y,x) + (y-x)^2$. The distribution of $\Sstat(y,x)$ is the same for all $x,y\in\R$, which follows from the stationarity of $\S$, that $\S(0,\cdot) = \cP_1(\cdot)$ from Definition~\ref{d.airy sheet}(ii), and that the Airy$_2$ process (given by $x\mapsto \cP_1(x) + x^2$) is stationary. (In fact, this implies that the common distribution is the GUE Tracy-Widom distribution \cite{tracy1994level}, though we will not make use of this fact.)

It is enough to show that $\limsup_{M\to\infty} \wdf(M) = \infty$ almost surely. We observe that
\begin{equation}\label{e.W not constant}
\P\left(\limsup_{M\to\infty}  \wdf(M) = \infty\right) \geq \limsup_{M\to\infty} \P\bigl(\wdf(M) > 2M^{1/2}\bigr).
\end{equation}
Writing out $\wdf(M)$ we see
\begin{align*}
\wdf(M) = \S(y_b,M) - \S(y_a, M)
&= \Sstat(y_b,M) - \Sstat(y_a, M) - (y_b-M)^2 +(y_a-M)^2\\
&= \Sstat(y_b,M) - \Sstat(y_a, M) - (y_b^2-y_a^2) + 2M(y_b-y_a).
\end{align*}
Note that $\Sstat(y_b,\pm M)$, $\Sstat(y_a,\pm M)$ all have the same distribution and are almost surely finite. For notational simplicity, let $X$ be a random variable with the same distribution. Now, by a union bound,
\begin{align*}
\P\bigl(\wdf(M) \leq 2M^{1/2}\bigr) \leq \P\bigl(X\leq M^{1/2} + y_a^2 - (y_b-y_a)M\bigr) + \P\bigl(X\geq -M^{1/2} - y_b^2 + (y_b-y_a)M\bigr).
\end{align*}
Since $X$ is almost surely finite and $y_b>y_a$, this implies that 
$$\limsup_{M\to\infty}\P\bigl(\wdf(M) - \wdf(-M) > 2M^{1/2}\bigr) = 1,$$
which, with \eqref{e.W not constant}, completes the proof of Lemma~\ref{l.strong non-degeneracy of E}.
\end{proof}

\subsection{A Pitman transform representation of $Z^{\lambda,b}_i$ and the proof of Theorem~\ref{t.patchwork}}\label{s.skorohod representation}

As we mentioned in Section~\ref{s.abs cont to br local time}, to prove Theorem~\ref{t.patchwork} we will write $\wdf(x)$ as 
$$\wdf(x) = \max_{\lambda \leq s\leq x}\left(Z^{\lambda, b}_2(s-\lambda) - \cP_1(s)\right)$$
under the simplifying assumption that the infinite geodesic ending at $(x,1)$ enters the top line only after $\lambda$, which will hold for all large enough $x$; in the actual formula (see Lemma~\ref{l.Z_i^lambda recursion}) there will be an extra term for the case that the geodesic is on the top line at $\lambda$.
Since $\S(y_a,x) = \S(0,x) = \cP_1(x)$, the previous equation is equivalent to
$$\S(y_b,x) = \cP_1(x) + \max_{\lambda \leq s\leq x}\left(Z^{\lambda, b}_2(s - \lambda) - \cP_1(s)\right).$$
Note that this is the LPP value from $(\lambda, 2)$ to $(x,1)$ in the two-line environment with top line $\cP_1$ and bottom line $\smash{Z^{\lambda,b}_2(\,\cdot\, -\lambda)}$. 

Since $\S(y_b,x) = Z^{\lambda, b}_1(x-\lambda)$ for $x\geq \lambda$ by the definition \eqref{e.Z_i^lambda defn} of $Z^{\lambda,b}_1$, to prove the above displayed formula for $\S(y_b,x)$, we can proceed by finding a representation of $\smash{Z^{\lambda, b}_1}$ in terms of $\smash{Z^{\lambda,b}_2}$ and $\cP_1$.
To explain the representation that we will obtain, recall from \eqref{e.Z_i^lambda defn} that $\smash{Z^{\lambda, b}_1}$ is given by
$$Z_i^{\lambda, b}(x) = \sup_{j\geq i}\left\{b_j^\lambda + \cP[(\lambda, j) \to (\lambda+x, i)]\right\}.$$
Essentially, $Z^{\lambda, b}_1(x)$ is the best weight from $\lambda$ to $(\lambda+x,1)$ with the boundary data $\{b^\lambda_i\}_{i\in\N}$. Thus, heuristically, it should be expressible as a two-line LPP problem with top line $\cP_1$ and bottom line~$\smash{Z^{\lambda, b}_{2}}$, as this should capture the case that the geodesic enters the top line after $\lambda$ (leaving aside for now the case that it enters the top line \emph{before} $\lambda$), as mentioned above.

However, observe that $\smash{Z^{\lambda, b}_{2}(0) = b^\lambda_{2}}$ and not zero. Since the LPP value considers increments, the boundary value $b^\lambda_2$ would be lost if we considered simply the LPP problem.
To handle this, we introduce notation for the LPP value across two lines that includes the boundary data on the second line. For two continuous functions $f_1, f_2 :[0,\infty)\to\R$, we call this the Pitman transform $\PT(f_1,f_2)$, defined by
\begin{equation}\label{e.G' defn}
(\PT(f_1,f_2))(x) = f_1(x) + \max_{0\leq s\leq x}(f_2(s) - f_1(s)).
\end{equation}
This is similar to the definition of the transformation with the same name in \cite{sarkar2020brownian}. We next record a few basic properties of $\PT$.

%

%

\begin{lemma}\label{l.modified gap properties}
Let $a\in\R$. Let $f_1,f_2, f_2^i:[0,\infty)\to \R$ be continuous for $i\in\N$. Then,%
\begin{enumerate}
	\item $a+\PT(f_1, f_2) = \PT(f_1, a+f_2)$, where $(a+f_2)(x) = a+f_2(x)$

	\item $\max_{i\in\N} \PT(f_1,f^i_2) = \PT(f_1, \max_{i\in\N} f^i_2)$.
\end{enumerate}
\end{lemma}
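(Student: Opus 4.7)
The plan is to prove both parts by directly unpacking the definition \eqref{e.G' defn} of $\PT$ and applying elementary properties of the maximum. There should be no substantive obstacle: both identities are formal consequences of how constants and suprema interact with the running-max operation defining $\PT$.

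For part (i), the idea is simply to pull the constant $a$ out of the maximum. Starting from the definition,
\[
\PT(f_1, a+f_2)(x) = f_1(x) + \max_{0\leq s\leq x}\bigl((a+f_2)(s) - f_1(s)\bigr) = f_1(x) + a + \max_{0\leq s\leq x}\bigl(f_2(s) - f_1(s)\bigr),
\]
and the right-hand side equals $a + \PT(f_1,f_2)(x)$ by definition of $\PT(f_1,f_2)$. So the first identity is immediate.

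For part (ii), the key step is the interchange of two suprema (one over $s \in [0,x]$, the other over $i\in\N$). Explicitly,
\[
\PT\Bigl(f_1, \max_{i\in\N} f_2^i\Bigr)(x) = f_1(x) + \max_{0\leq s\leq x}\Bigl(\max_{i\in\N} f_2^i(s) - f_1(s)\Bigr) = f_1(x) + \max_{0\leq s\leq x}\max_{i\in\N}\bigl(f_2^i(s) - f_1(s)\bigr),
\]
and swapping the order of the two maxima (an elementary fact valid for suprema in $\R\cup\{\pm\infty\}$) gives
\[
f_1(x) + \max_{i\in\N}\max_{0\leq s\leq x}\bigl(f_2^i(s) - f_1(s)\bigr) = \max_{i\in\N}\Bigl(f_1(x) + \max_{0\leq s\leq x}\bigl(f_2^i(s) - f_1(s)\bigr)\Bigr) = \max_{i\in\N}\PT(f_1, f_2^i)(x),
\]
using that $f_1(x)$ is constant in $i$. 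This yields the second identity pointwise in $x$.

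Since both identities come down to the two elementary observations above (factoring out an additive constant from a supremum, and interchanging two suprema), there is no real difficulty; one should just make sure that if the collection $\{f_2^i\}_{i\in\N}$ is such that $\max_{i\in\N} f_2^i$ may be understood as a pointwise supremum, the same interpretation is used consistently on both sides of (ii), so that the interchange of suprema is justified without issue.
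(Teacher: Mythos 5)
Your proof is correct, and since the paper explicitly omits the proof of this lemma as trivial, your direct unpacking of the definition—factoring the constant out of the running maximum for (i), and interchanging the two suprema for (ii)—is exactly the intended argument.
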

The proof of Lemma~\ref{l.modified gap properties} is trivial and we omit it.

Now we may state the recursive formula for $Z^{\lambda, b}_i$. Essentially, $\smash{Z^{\lambda, b}_i}$ is $\cP_i$ reflected off of $\smash{Z^{\lambda, b}_{i+1}}$ (in the sense of Skorohod reflection; see \cite{warren2007dyson}, mentioned earlier in Section~\ref{s.intro.proof strategy}, for work within KPZ on Brownian motions reflecting off of other Brownian objects, namely Dyson's Brownian motion).

\begin{lemma}\label{l.Z_i^lambda recursion}
Let $\lambda\in\R$ and recall that $\cP_j^\lambda(x) = \cP_j(\lambda+x) - \cP_j(\lambda)$ for all $j\in\N$.  Almost surely, for all $i\in\N$ and $x\geq 0$,
$$Z^{\lambda, b}_i(x) = \max\left\{(\PT(\cP_i^\lambda, Z^{\lambda, b}_{i+1}))(x), b^\lambda_i + \cP^\lambda_i(x)\right\}.$$
\end{lemma}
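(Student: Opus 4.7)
The plan is to unfold the definition of $Z^{\lambda,b}_i(x)$ and separate the contribution coming from the index $j=i$ from those coming from $j\geq i+1$; the first will give the term $b^\lambda_i + \cP^\lambda_i(x)$, and the second will give $\PT(\cP^\lambda_i, Z^{\lambda,b}_{i+1})(x)$. By \eqref{e.Z_i^lambda defn},
\begin{equation*}
Z^{\lambda,b}_i(x) = \max\Bigl\{b^\lambda_i + \cP[(\lambda,i)\to(\lambda+x,i)],\; \sup_{j\geq i+1}\{b^\lambda_j + \cP[(\lambda,j)\to(\lambda+x,i)]\}\Bigr\}.
\end{equation*}
For $j=i$ the LPP value is simply $\cP_i(\lambda+x)-\cP_i(\lambda) = \cP^\lambda_i(x)$, so the first argument of the outer max is $b^\lambda_i + \cP^\lambda_i(x)$, matching the right-hand side of the desired identity.

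For $j\geq i+1$, any up-right path from $(\lambda,j)$ to $(\lambda+x,i)$ must at some (last) location $\lambda+s$, with $s\in[0,x]$, jump from line $i+1$ to line $i$, and then travel along line $i$ until $\lambda+x$. Decomposing the weight at this jump yields
\begin{equation*}
\cP[(\lambda,j)\to(\lambda+x,i)] = \sup_{0\leq s\leq x}\Bigl\{\cP[(\lambda,j)\to(\lambda+s,i+1)] + \cP_i(\lambda+x) - \cP_i(\lambda+s)\Bigr\}.
\end{equation*}
Adding $b^\lambda_j$, taking $\sup_{j\geq i+1}$, and exchanging the two suprema (which is permitted since the inner one is over a compact set and the joint expression is continuous, or by using the pointwise identity $\sup_a\sup_b = \sup_b\sup_a$), we obtain
\begin{equation*}
\sup_{j\geq i+1}\{b^\lambda_j + \cP[(\lambda,j)\to(\lambda+x,i)]\} = \cP_i(\lambda+x) + \sup_{0\leq s\leq x}\Bigl\{Z^{\lambda,b}_{i+1}(s) - \cP_i(\lambda+s)\Bigr\},
\end{equation*}
where we have recognized the remaining inner supremum as the definition \eqref{e.Z_i^lambda defn} of $Z^{\lambda,b}_{i+1}(s)$.

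To conclude, subtract and add $\cP_i(\lambda)$ to rewrite the right-hand side of the previous display as $\cP^\lambda_i(x) + \max_{0\leq s\leq x}(Z^{\lambda,b}_{i+1}(s) - \cP^\lambda_i(s))$, which is exactly $\PT(\cP^\lambda_i, Z^{\lambda,b}_{i+1})(x)$ by \eqref{e.G' defn}. Combining the $j=i$ and $j\geq i+1$ contributions gives the claimed formula. The only nontrivial point is the path decomposition at the jump from line $i+1$ to line $i$, and its main ingredient is the observation that such a jump must occur at some $s\in[0,x]$ whenever $j\geq i+1$; this is the content of Definition~\ref{d.lpp}. Existence of the maximizer $s$ is ensured by continuity of $\cP_i$ and of $Z^{\lambda,b}_{i+1}$ (the latter being a supremum of continuous functions attained, by Proposition~\ref{p.maximizer index is finite for general i}, at a finite index on any compact set).
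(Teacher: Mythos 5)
Your proof is correct and takes essentially the same route as the paper's: split off the $j=i$ term, decompose the $j\geq i+1$ LPP values at the (unique) jump from line $i+1$ to line $i$, and then exchange suprema to recognize $\PT(\cP^\lambda_i, Z^{\lambda,b}_{i+1})$. The paper phrases the final step via Lemma~\ref{l.modified gap properties}(ii) rather than an explicit swap of suprema, but this is a cosmetic difference.
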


Essentially, the second term $b_i^\lambda + \cP^\lambda_i(x)$ handles the case that the best path to $(\lambda+x,i)$ comes through the $i$\textsuperscript{th} line at $\lambda$. The first term, which we discussed previously, is the case where the best path to $(\lambda+x,i)$ jumps to the $i$\textsuperscript{th} line after $\lambda$. We will be making use of only the $i=1$ case in our proof of Theorem~\ref{t.patchwork}.

\begin{proof}[Proof of Lemma~\ref{l.Z_i^lambda recursion}]
From the definition \eqref{e.Z_i^lambda defn} of $Z_i^\lambda$,
\begin{align}\label{e.Z_i^lambda decomposition}
Z_i^\lambda(x) = \max_{j\geq i+1}\left\{b_j^\lambda + \cP[(\lambda, j)\to (\lambda+x,i)]\right\} \vee (b_i^\lambda + \cP^\lambda_i(x)). 
\end{align}
So it is enough to show that the first maximization is $\PT(\cP_i^\lambda, Z_{i+1}^\lambda)$. For this, observe that
\begin{align*}
\cP[(\lambda, j)\to(\lambda+x, i)] &= \cP_i(\lambda+x) + \max_{0\leq z\leq x}\big(\cP[(\lambda,j) \to (\lambda+z,i+1)] - \cP_i(\lambda+z)\big)\\
&= \Big(\PT\big(\cP^\lambda_i, \cP[(\lambda, j)\to (\lambda+\bigcdot\,, i+1)]\big)\Big)(x).
\end{align*}
Putting this into \eqref{e.Z_i^lambda decomposition}, applying Lemma~\ref{l.modified gap properties}, and recalling the definition \eqref{e.Z_i^lambda defn} of $Z_{i+1}^\lambda$ completes the proof of Lemma~\ref{l.Z_i^lambda recursion}.
\end{proof}

With these preliminaries, we may give the proof of Theorem~\ref{t.patchwork}. The idea is to write $\S(y_b,x)$ as the running maximum of $\smash{Z^{\lambda,b}_2 - \cP_1}$, and use that the latter process is absolutely continuous away from zero to rate four Brownian motion by Proposition~\ref{p.Br abs cont of Z}. Finally, the increment of the running maximum of such a process is absolutely continuous to Brownian local time by Lemma~\ref{l.abs cont of max increment} and L\'evy's identity.

\begin{proof}[Proof of Theorem~\ref{t.patchwork}]
Recall that we have assumed (without loss of generality) that $y_a = 0$. From Definition~\ref{d.airy sheet} of $\S$, this gives that
$$\S(y_a, x) = \cP_1(x),$$
as processes in $x$. Further, from the definition \eqref{e.Z_i^lambda defn} of $Z^{\lambda, b}_i$ and Lemma~\ref{l.Z_i^lambda recursion}, for any $\lambda\in\R$ and $x\geq \lambda$,
\begin{align*}
\S(y_b,x) = Z^{\lambda, b}_1(x-\lambda)
&= \PT\bigl(\cP^\lambda_1, Z^{\lambda,b}_2\bigr)(x-\lambda)\vee (b_1^\lambda + \cP^\lambda_1(x-\lambda))\\
&= \sup_{\lambda \leq z\leq x}\left(Z^{\lambda, b}_2(z-\lambda) + \cP_1(x) - \cP_1(z)\right) \vee \left(b^\lambda_1+\cP_1(x) - \cP_1(\lambda)\right),
\end{align*}
where we used the definition \eqref{e.G' defn} of the Pitman transform and that $\cP^\lambda_1(x) = \cP_1(\lambda+x) - \cP_1(\lambda)$. 
The preceding two displays imply that
\begin{equation*}
\begin{split}
\wdf(x) &= \sup_{\lambda \leq z\leq x}\left(Z^{\lambda, b}_2(z-\lambda) - \cP_1(z)\right) \vee \left(b^\lambda_1- \cP_1(\lambda)\right)\\
&= \sup_{\lambda \leq z\leq x}\left(Z^{\lambda, b}_2(z-\lambda) - \cP_1(z)\right) \vee \wdf(\lambda),
\end{split}
\end{equation*}
where in the second equality we have used that $b^\lambda_1 = \S(y_b, \lambda)$ from the definition \eqref{e.a b definition} of $b^{\lambda}_i$. Because we don't have absolute continuity of $\smash{Z^{\lambda,b}_2}$ to Brownian motion on $[0, T]$ and only on $[\varepsilon,T]$, we want to split the range over which the maximum is taken so that we can get a separation of $z$ from $\lambda$ (which we will take to be a unit separation), which we do next. Indeed,
\begin{equation}\label{e.D two line representation}
\begin{split}
\wdf(x) &= \sup_{\lambda+1 \leq z\leq x}\left(Z^{\lambda, b}_2(z-\lambda) - \cP_1(z)\right) \vee \sup_{\lambda \leq z\leq \lambda+1}\left(Z^{\lambda, b}_2(z-\lambda) - \cP_1(z)\right) \vee \wdf(\lambda)\\
&= \sup_{\lambda+1 \leq z\leq x}\left(Z^{\lambda, b}_2(z-\lambda) - \cP_1(z)\right) \vee \wdf(\lambda+1),
\end{split}
\end{equation}
where in the last equality we combined the last two terms and wrote it as $\wdf(\lambda+1)$ using the formula for the latter from the previous display.

Let $[c,d]\subset \R$ be fixed. Recall that we wish to show that $\wdf(\cdot) - \wdf(c)|_{[c,d]}$ is absolutely continuous to $\cL(\cdot) - \cL(1)|_{[1, d-c+1]}$. We know from Proposition~\ref{p.Br abs cont of Z} that the increment of $\smash{Z^{\lambda,b}_2 - \cP^\lambda_1}$ on $[1,T]$ is absolutely continuous to Brownian motion for every $T>0$, and so we will be able to argue from Lemma~\ref{l.abs cont of max increment} that the first term in the previous display is comparable to the running maximum of Brownian motion. To establish that this is sufficient, we need to show that the second term, $\wdf(\lambda+1)$, is not the value of $\wdf(x)$ on the interval of interest with high probability, which we do next.

For any given $\delta>0$, let $\lambda < c-1$ be chosen sufficiently negative that
\begin{equation}\label{e.lambda suff negative condition}
\P\Bigl(\wdf(c) > \wdf(\lambda+1)\Bigr) \geq 1-\delta.
\end{equation}
That this is possible follows from Lemma~\ref{l.strong non-degeneracy of E}, which says that $\wdf(\lambda+1) \to -\infty$ almost surely as $\lambda\to-\infty$. On the high probability event of the previous display, \eqref{e.D two line representation} and the non-decreasing nature of $\wdf$ yield that, for all $x \geq c$,
$$\wdf(x) = \sup_{\lambda+1 \leq z\leq x}\left(Z^{\lambda, b}_2(z-\lambda) - \cP_1(z)\right).$$
Now, we know by Proposition~\ref{p.Br abs cont of Z} that 
$$\Bigl(\cP_1(\cdot) - \cP_1(\lambda+1), Z^{\lambda, b}_2(\cdot-\lambda) - Z^{\lambda, b}_2(1)\Bigr)|_{[\lambda+1, \lambda+T]} \ll (B_1, B_2)|_{[0, T-1]},$$
where $B_1$ and $B_2$ are independent rate two Brownian motions on $[0,\infty)$. In particular, this implies that
$$\left(Z_2^{\lambda,b} (\cdot -\lambda) - \cP_1(\cdot) - (Z_2^{\lambda,b} (1) - \cP_1(\lambda+1))\right)\Bigr|_{[\lambda + 1, \lambda+T]} \ll B|_{[0, T-1]},$$
where $B$ is a rate four Brownian motion. We also know that on the high-probably event mentioned in \eqref{e.lambda suff negative condition}, for $x\in[c,d]$,
$$\wdf(x) - \wdf(c) = \sup_{\lambda+1 \leq z\leq x} \left(Z^{\lambda, b}_2(z-\lambda) - \cP_1(z)\right) - \sup_{\lambda+1\leq z\leq c} \left(Z^{\lambda, b}_2(z-\lambda) - \cP_1(z)\right).$$
Applying Lemma~\ref{l.abs cont of max increment} and L\'evy's identity Proposition~\ref{p.levy identity}, we see that the right hand side of the previous display is absolutely continuous to $\cL(\cdot) - \cL(1)$ as a process on $[1, d-c+1]$.

Thus, for any $A\subseteq \mc C([c,d])$ such that 
$$\P\left((\cL(\cdot) - \cL(1))\bigr|_{[1,d-c+1]} \in A\right) = 0,$$
we have, from the bound \eqref{e.lambda suff negative condition} on $\P(\wdf(c) > \wdf(\lambda+1))$, that
$$\P\left((\wdf(\cdot) - \wdf(c))\bigr|_{[c,d]} \in A\right) \leq 0 + \P\Bigl(\wdf(c) = \wdf(\lambda+1)\Bigr) \leq \delta.$$
Since $\delta>0$ was arbitrary, the proof of Theorem~\ref{t.patchwork} is complete.
\end{proof}

Given Theorem \ref{t.patchwork}, the proof of Corollary~\ref{c.HD is one-half} is quick, and we proceed to it next.

\subsection{The Hausdorff dimension of $E$: proving Corollary~\ref{c.HD is one-half}} \label{s.proof of HD theorem}
We start by recalling the definition of Hausdorff dimension.

\begin{definition}[Hausdorff dimension]\label{d.hd}
The $\alpha$-Hausdorff measure of a set $A\subseteq \R$, denoted $H^\alpha(A)$, is defined as $H^\alpha(A) = \lim_{\delta\downarrow 0} H^\alpha_\delta(A)$, where, for $\delta>0$,
$$H^\alpha_\delta(A) = \inf\left\{\sum_{i=1}^\infty \diam(U_i)^\alpha : A\subseteq \bigcup_{i=1}^\infty U_i \text{ and } 0<\diam(U_i) < \delta\right\}.$$
The Hausdorff dimension of $A$, denoted $\dim(A)$, is defined as
$$\dim(A) = \inf\big\{\alpha>0 : H^\alpha(A) < \infty\big\}.$$
\end{definition}

The Hausdorff dimension enjoys a \emph{countable stability} property; see for example \cite[below Definition~4.8]{mattila1999geometry}. It states that
$$\dim\left(\bigcup_{i\in I} S_i\right) = \sup_{i\in I}\,\dim(S_i),$$
where $I$ is any countable set.

We may now prove Corollary~\ref{c.HD is one-half} using Theorem~\ref{t.patchwork} and the countable stability property of Hausdorff dimension.

\begin{proof}[Proof of Corollary~\ref{c.HD is one-half}]
For $k\in\Z$, let $E_k = \NC(\wdf(\cdot) - \wdf(k))\cap[k, k+1]$. Now clearly,
$$\NC(\wdf) = \bigcup_{k\in\Z} \NC(\wdf)\cap[k,k+1] = \bigcup_{k\in\Z} E_k.$$
The second equality follows by noting that the set of non-constant points of a function is unchanged under constant vertical shifts of the function. 

By the absolute continuity of $\wdf(\cdot)-\wdf(k)$ on $[k,k+1]$ to $\cL(\cdot) - \cL(1)$ on $[1,2]$ and the well-known fact that the set of non-constant points of the latter process (on the event that it is non-empty) has Hausdorff dimension $\frac{1}{2}$ almost surely (see \cite[Theorem~4.24]{morters2010brownian}), it follows that $\dim(E_k) = \frac{1}{2}$ almost surely on the event that $E_k\neq \emptyset$.

Now, since from Lemma~\ref{l.strong non-degeneracy of E} $\NC(\wdf)$ is almost surely non-empty, there must exist at least one $k$ such that $E_k$ is non-empty. By the countable stability property, we obtain that $\dim(\NC(\wdf)) = \frac{1}{2}$ almost surely.
\end{proof}

\begin{remark}It is worth mentioning that besides Hausdorff dimension, there are other natural notions of dimensions that one might also consider, such as the Minkowski dimension and the packing dimension. While they do not agree in general, it is known that, for Brownian local time, all these notions of dimension agree and are equal to $1/2$. This can then be used to obtain analogues of Corollary~\ref{c.HD is one-half} for all the different notions.
\end{remark}

\section{Proof that local limit of $\wdf$ is Brownian local time}\label{s.local limit proof}

\subsection{Outline and setup for the proof} 
We first recall and expand on the proof ideas of Theorem~\ref{t.local limit} as discussed in Section~\ref{s.intro.proof strategy}. As in the latter section, this overview will only discuss the case where $\tau = \tau_\lambda$, as the important conceptual points of the proof are captured here. Recall for $\lambda\in\R$ that $\tau_\lambda$ is the first time after $\lambda$ that $\wdf$ is non-constant. 

Heuristically, the geodesic in $\cP$ corresponding to $\S(y_a, \cdot)$ with $y_a=0$ uses only the top line, since we have $\S(0, x) = \cP_1(x)$. Consider also the geodesic $\pi_b^t$ with boundary data $\{b^\lambda_i\}_{i\in\N}$. When $t=\tau_\lambda$, both geodesics are at the top line at $\tau_\lambda$ since they end at $(\tau_\lambda,1)$, although it may intersect the line $\{\tau_\lambda\}\times \N$ at other levels as well; in fact, as we will show, it will also intersect $(\tau_\lambda,2)$. As $t$ increases, if both the geodesics only use the top line in $[\tau_\lambda, t]$, then $\wdf$ will be constant in that interval by Lemma~\ref{l.S Z^lambda representation}, which allows us to express $\S(y_b, \tau_\lambda+x)$ in terms of the boundary data processes $\smash{Z^{\lambda,b}_i}$ at $\tau_\lambda$. 

Thus by definition of $\tau_\lambda$ (it being a point of increase), at least one of the geodesics must jump to a different line (i.e., its line index at $\tau_\lambda$ increases) as soon as $t$ increases past $\tau_\lambda$. But we know that the geodesic corresponding to $y_a$ uses only the top line, so $\pi_b^t$ must jump (this should also implied by planarity, as $\pi_b^t$ is to the right of the $y_a$ geodesic). 

If we know that it jumps to the \emph{second} line, 
then $\wdf$ in the right-neighbourhood of $\tau_\lambda$ will  essentially be the running maximum of $\cP_2-\cP_1$; this is because the weight of the geodesic which jumps to line two is the LPP value to $t$ across the top two lines $(\cP_1,\cP_2)$, while that of the one which remains on line one is $\cP_1(t)$, and the difference of the two is the running maximum of $\cP_2-\cP_1$, as we observed previously in Section~\ref{s.skorohod representation}. Since $\cP_2-\cP_1$ is absolutely continuous to rate four Brownian motion, even including the origin, this running maximum is absolutely continuous to Brownian local time, again without avoiding the origin. The local limit at zero of the increment of such a process can easily be shown to be Brownian local time by scale invariant considerations.

Thus there are two new aspects to the proof. 
First, we need to know that at $\tau_\lambda$, the geodesic which jumps does so to the second line and not a lower one (Lemma~\ref{l.point of inc rep}, using Proposition~\ref{p.Z_1 Z_3 never coincide}). Second, we need a statement on local limits of processes absolutely continuous to Brownian local time, where we will make use of the added information that the process can be written as a running maximum of a Brownian-like process (Corollary~\ref{c.local limit local time}).

The first aspect of the proof outline, that the geodesic, on jumping after $\tau$, would jump to the second line and not a lower one, is implied if the boundary data at $\tau$ at the third line is strictly smaller than at the first line, i.e., if $\smash{Z^{\lambda, b}_1(\tau-\lambda) > Z^{\lambda,b}_3(\tau-\lambda)}$ by Lemma~\ref{l.S Z^lambda representation} and the continuity of $x\mapsto \cP[(\tau, i) \to(x,1)]${, and is written out in Lemma~\ref{l.point of inc rep}}. That $b^\lambda_1 > b^\lambda_3$ for any fixed $\lambda$ is true almost surely (though we have not yet proved this), but we require such a statement at the random time $\tau -\lambda$. This is the content of the next proposition.

\begin{proposition}\label{p.Z_1 Z_3 never coincide}
Fix $\lambda\in\R$. Almost surely, for all $\tau>0$, it holds that $Z_1^\lambda(\tau) > Z_3^\lambda(\tau)$.
\end{proposition}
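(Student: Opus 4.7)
The plan is to reduce the event $\{Z_1^{\lambda,b}(\tau) = Z_3^{\lambda,b}(\tau)\text{ for some }\tau > 0\}$, via the Pitman-transform recursion together with Brownian-Gibbs absolute continuity, to a simultaneous running-minimum event for two independent Brownian motions sharing a common deterministic drift; the latter is classically null because planar Brownian motion almost surely avoids any given point after time zero.

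First I would use the monotonicity $Z_1^{\lambda,b} \ge Z_2^{\lambda,b} \ge Z_3^{\lambda,b}$ to rewrite the target event as $\{Z_1^{\lambda,b}(\tau) = Z_2^{\lambda,b}(\tau) = Z_3^{\lambda,b}(\tau)\}$.  Setting $Y_i := Z_{i+1}^{\lambda,b} - \cP_i^\lambda$ for $i \in \{1,2\}$, a short computation using Lemma~\ref{l.Z_i^lambda recursion} gives
\[
Z_i^{\lambda,b}(x) - Z_{i+1}^{\lambda,b}(x) = \max\Bigl\{b_i^\lambda,\, \sup_{0 \le s \le x} Y_i(s)\Bigr\} - Y_i(x),
\]
so the target event becomes the existence of some $\tau > 0$ at which both $Y_1$ and $Y_2$ attain their running suprema, with values at least $b_1^\lambda$ and $b_2^\lambda$ respectively.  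A countable union over $(\eps, T) = (1/n, n)$ for $n \in \N$ reduces the proposition to showing this event is null on each compact $[\eps, T] \subset (0, \infty)$.

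Next I would condition on $\mathcal F_{\mathrm{ext}} := \mathcal F_{\mathrm{ext}}(2, \lambda+\eps, \lambda+T)$.  The function $f := Z_3^{\lambda,b}|_{[0,T]}$ and each $b_j^\lambda$ depend only on $\cP_j$ for $j \ge 3$ together with $\cP_1, \cP_2$ outside $[\lambda+\eps, \lambda+T]$, so they become deterministic under the conditioning.  The Brownian Gibbs property then realizes $(\cP_1^\lambda, \cP_2^\lambda)|_{[\eps,T]}$ as two non-intersecting rate-two Brownian bridges above $\cP_3$, which is absolutely continuous to two independent rate-two Brownian motions $(B_1, B_2)|_{[\eps,T]}$ with the prescribed endpoints, so it suffices to establish the analogous null event in this Brownian setting.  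Within that setting, a direct computation using the recursion shows that at any $\tau$ with $V(\tau) := Z_2^{\lambda,b}(\tau) - Z_3^{\lambda,b}(\tau) = 0$ one has $Z_2^{\lambda,b}(\tau) = f(\tau)$, whence $Y_1(\tau) = f(\tau) - B_1(\tau)$, while at any $s$ with $V(s) > 0$ the strict inequality $Y_1(s) > f(s) - B_1(s)$ holds.  Combining $V(\tau) = 0$ with the requirement that $Y_1$ attain its running supremum at $\tau$ therefore forces $\tau$ to be a \emph{simultaneous} running-minimum time of the two (now independent) processes $B_1 - f$ and $B_2 - f$ on $[0, \tau]$.

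The hard part will be the final step: showing that two independent Brownian motions sharing a common continuous deterministic drift almost surely do not share a running-minimum time on $(\eps, T)$.  For each fixed $\tau$ the independence of $B_1$ and $B_2$ together with the one-dimensional L\'evy-type identity (Proposition~\ref{p.levy identity}) yields $\P(\text{both at running min at }\tau) = 0$; the upgrade to an uncountable-time statement follows from the classical two-dimensional Brownian-motion-avoiding-a-point principle, applied to the planar reflected process $\bigl((B_1 - f) - M_1,\, (B_2 - f) - M_2\bigr)$ (where $M_i$ is the running minimum of $B_i - f$), which inherits the same almost-sure zero-set structure as a pair of independent reflected Brownian motions and hence hits the corner $(0,0)$ with probability zero after time zero.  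Transferring back through the absolute continuities of Step~2 and the countable union over rational $(\eps, T)$ then yields $\P(\exists\,\tau > 0:\, Z_1^{\lambda,b}(\tau) = Z_3^{\lambda,b}(\tau)) = 0$.
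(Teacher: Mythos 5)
Your plan is on the right track: the reduction to $\{Z_1^{\lambda,b}(\tau) = Z_2^{\lambda,b}(\tau) = Z_3^{\lambda,b}(\tau)\}$, the use of Lemma~\ref{l.Z_i^lambda recursion} to show that such a $\tau$ must be a simultaneous running-max time of $Z_3^{\lambda,b} - \cP_1^\lambda$ and $Z_3^{\lambda,b} - \cP_2^\lambda$, and the ultimate appeal to two-dimensional Brownian motion avoiding a point, are all exactly the skeleton of the paper's argument. The computation $Z_i^{\lambda,b} - Z_{i+1}^{\lambda,b} = \max\{b_i^\lambda, \sup_{0\le s\le x} Y_i(s)\} - Y_i(x)$ is correct and a clean way to see the running-maximum structure.

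The gap is in the final step. You condition on $\mathcal F_{\mathrm{ext}}(2,\lambda+\eps,\lambda+T)$ to treat $f := Z_3^{\lambda,b}$ as a deterministic continuous function, obtaining two independent Brownian motions $B_1, B_2$ with a common drift $f$, and then assert that the planar reflected process $\bigl((B_1-f)-M_1,(B_2-f)-M_2\bigr)$ ``inherits the same almost-sure zero-set structure as a pair of independent reflected Brownian motions.'' This is precisely the thing that needs to be proved, and it is not true for an arbitrary deterministic continuous $f$. The running-maximizer set of $f - B_i$ can depend sensitively on the regularity of $f$: if $f$ is not in the Cameron--Martin space, then $f - B_i$ is mutually singular to $-B_i$, Lévy's identity does not apply componentwise to the pair, and there is no a priori reason the two running-max sets cannot overlap. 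The correct version of your claim is an almost-every-$f$ statement, and proving that requires exactly the regularity input you have discarded by conditioning: one needs to know that the law of $Z_3^{\lambda,b}$ itself is absolutely continuous, on $[\eps/2,T]$, to a Brownian motion. The paper's Proposition~\ref{p.Br abs cont of Z} (never invoked in your proposal) gives the stronger statement $(\cP_1^\lambda,\cP_2^\lambda,Z_3^{\lambda,b})|_{[\eps/2,T]} \ll (B_1,B_2,B_3)|_{[\eps/2,T]}$ for three independent rate-two Brownian motions. From this, the pair $(Z_3^{\lambda,b}-\cP_1^\lambda,\, Z_3^{\lambda,b}-\cP_2^\lambda)$ on $[\eps/2,T]$ is absolutely continuous to the correlated pair $(B_3-B_1,\,B_3-B_2)$, which is in turn absolutely continuous to two \emph{independent} Brownian motions. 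Only then does Lévy's identity legitimately convert simultaneous running maximizers into shared zeros, and Lemma~\ref{l.2d BM zero} finishes the job. Your argument becomes correct the moment you replace the quenched step with this annealed absolute-continuity comparison; as currently written, the last paragraph is asserting the conclusion rather than deducing it.
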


The proof of Proposition~\ref{p.Z_1 Z_3 never coincide}, which we give in Section~\ref{s.Z_1 Z_3 never collide}, has several technical details to be handled, but finally relies on the well-known fact that two-dimensional Brownian motion, started at any given point in the plane, almost surely never subsequently hits the origin.

\begin{remark}
In fact, as we indicated in the proof overview in Section~\ref{s.intro.proof strategy}, it is plausible that a proof for Theorem~\ref{t.local limit} can be formulated by directly appealing to the representation of $\wdf$ in terms of $\cP_1$ and $\smash{Z^{\lambda,b}_2}$ and their joint absolute continuity to Brownian motion away from zero, as in the proof of Theorem~\ref{t.patchwork}. However, we have chosen to follow the approach outlined above as it also yields interesting and non-trivial information about the joint distribution of the boundary data processes such as Proposition~\ref{p.Z_1 Z_3 never coincide}.
\end{remark}

The final aspect of the proof outline was that the local limit of the running maximum of a process absolutely continuous to Brownian motion is Brownian local time. This follows from an analogous statement that the local limit of a process absolutely continuous to Brownian motion is itself Brownian motion. We cite this from \cite{dauvergne2020three}. 

\begin{lemma}[Lemma~4.3 of \cite{dauvergne2020three}]\label{l.local limit BM}
Let $B':[0, T]\to \R$ be a process such that $B'|_{[0,T]} \ll B|_{[0,T]}$ for all $T>0$, where $B$ is Brownian motion of rate $\sigma^2$. Then,
$$\lim_{\varepsilon\to 0}\varepsilon^{-1/2} B'(\varepsilon t) = B(t),$$
where the limit is in distribution in the topology of uniform convergence on compact sets.
\end{lemma}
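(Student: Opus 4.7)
The plan is to show the stronger statement that the laws of $\epsilon^{-1/2} B'(\epsilon \hspace{1pt}\cdot\hspace{1pt})|_{[0,T]}$ converge in \emph{total variation} to the Wiener measure on $C([0,T])$ for each fixed $T > 0$; this is more than enough since total variation convergence implies weak convergence with respect to any Borel-measurable topology that is compatible with the Borel structure, in particular the topology of uniform convergence on compact sets of $[0,\infty)$.

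Fix $T > 0$ and $T_0 > T$. Let $\mu_s$ and $\mu'_s$ denote the laws of $B|_{[0,s]}$ and $B'|_{[0,s]}$ on $C([0,s])$ respectively, and set $M = d\mu'_{T_0}/d\mu_{T_0}$, which exists by hypothesis. Introduce the scaling map $\phi_\epsilon: C([0,\epsilon T]) \to C([0,T])$ by $\phi_\epsilon(f)(t) = \epsilon^{-1/2} f(\epsilon t)$. Brownian scaling gives $(\phi_\epsilon)_* \mu_{\epsilon T} = \mu_T$, so the law of $\epsilon^{-1/2} B'(\epsilon\hspace{1pt}\cdot\hspace{1pt})|_{[0,T]} = (\phi_\epsilon)_* \mu'_{\epsilon T}$ is absolutely continuous with respect to $\mu_T$ with Radon-Nikodym derivative $(d\mu'_{\epsilon T}/d\mu_{\epsilon T})\circ \phi_\epsilon^{-1}$. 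A change of variables then reduces the total variation distance to
\[
\bigl\| (\phi_\epsilon)_* \mu'_{\epsilon T} - \mu_T \bigr\|_{\mathrm{TV}} = \int_{C([0, \epsilon T])} \bigl| d\mu'_{\epsilon T}/d\mu_{\epsilon T} - 1 \bigr|\, d\mu_{\epsilon T}.
\]

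Let $(\F_s)_{s \in [0,T_0]}$ denote the canonical filtration on $C([0,T_0])$. Restricting $\mu'_{T_0}$ and $\mu_{T_0}$ to $\F_{\epsilon T}$ yields $\mu'_{\epsilon T}$ and $\mu_{\epsilon T}$ under the restriction map, and consequently the above Radon-Nikodym derivative coincides (after pull-back) with the conditional expectation $\E_{\mu_{T_0}}[M \mid \F_{\epsilon T}]$. Thus the displayed total variation distance equals
\[
\int_{C([0, T_0])} \bigl| \E_{\mu_{T_0}}[M \mid \F_{\epsilon T}] - 1 \bigr| \, d\mu_{T_0}.
\]

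Finally, as $\epsilon \downarrow 0$ the filtrations $\F_{\epsilon T}$ decrease to the germ $\sigma$-algebra $\F_{0+} := \bigcap_{s > 0} \F_s$ of the coordinate process on $C([0,T_0])$. The backwards martingale convergence theorem gives $\E_{\mu_{T_0}}[M \mid \F_{\epsilon T}] \to \E_{\mu_{T_0}}[M \mid \F_{0+}]$ in $L^1(\mu_{T_0})$, while Blumenthal's zero-one law for the Wiener measure starting from $0$ guarantees that $\F_{0+}$ is $\mu_{T_0}$-trivial. Hence the conditional expectation equals the unconditional expectation $\E_{\mu_{T_0}}[M] = 1$, so the displayed $L^1$ integral tends to zero. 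This produces the total variation convergence and completes the proof. The conceptually delicate ingredient, and the only non-formal step, is the application of Blumenthal's zero-one law; everything else is bookkeeping between laws, restrictions, and the scaling map.
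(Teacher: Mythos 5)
Your proof is correct, and it takes a genuinely different route from the argument the paper attributes to \cite{dauvergne2020three}. The paper sketches that source's argument as one by contradiction: one identifies, for events $A$ in a measure-determining class, a scale-invariant functional of the path whose value at $\varepsilon^{-1/2}B(\varepsilon\cdot)$ converges almost surely to $\P(B\in A)$, then transports this almost sure convergence to $B'$ via absolute continuity. Your argument instead works directly with Radon--Nikodym derivatives: you identify $d\mu'_{\varepsilon T}/d\mu_{\varepsilon T}$ (pulled back to $C([0,T_0])$) with $\E_{\mu_{T_0}}[M\mid \F_{\varepsilon T}]$, invoke backwards martingale convergence along the decreasing filtration $\F_{\varepsilon T}\downarrow\F_{0+}$, and then use Blumenthal's zero-one law to collapse the limit to the constant $\E_{\mu_{T_0}}[M]=1$. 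Both approaches ultimately rest on the triviality of the germ $\sigma$-algebra under Wiener measure, but yours makes this dependence explicit and mechanical rather than buried inside a cleverly chosen functional. You also obtain the strictly stronger conclusion of convergence in total variation on $C([0,T])$ for each $T$, rather than mere weak convergence; since total variation convergence of the marginal laws on $C([0,T])$ for every $T$ gives both finite-dimensional convergence and tightness, the claimed convergence in distribution on $C([0,\infty))$ in the compact-open topology follows. The only point worth flagging is that your $B'$ is required to start at $0$ almost surely, but this is automatic from $B'|_{[0,T]}\ll B|_{[0,T]}$, so there is no gap.
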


The argument for Lemma~\ref{l.local limit BM} given in \cite{dauvergne2020three} proceeds by contradiction, but is essentially to identify, for a given event $A$ in a convenient measure-determining class, a functional of $t\mapsto \varepsilon^{-1/2}B(\varepsilon t)$ which converges almost surely to $\P(B\in A)$ as $\varepsilon\to 0$. The functional applied to $t\mapsto B'_\varepsilon(t) := \varepsilon^{-1/2}B'(\varepsilon t)$ converges to $\P(\lim_{\varepsilon\to 0} B'_\varepsilon \in A)$, which, by absolute continuity, must be equal to $\P(B\in A)$.

\begin{corollary}\label{c.local limit local time}
Let $B':[0,\infty)\to \R$ be a stochastic process such that $B|_{[0,T]} \ll B|_{[0,T]}$ for any $T>0$, where $B$ is Brownian motion $B$ of rate $\sigma^2$. Let $Y:[0,\infty)\to [0,\infty)$ be defined by $Y(t) = \max_{0\leq s\leq t} B'(s)$. Let $\cL:[0,\infty)$ be the local time at the origin associated to $B$. Then
$$\lim_{\varepsilon\to 0}\varepsilon^{-1/2}Y(\varepsilon t) = \cL(t),$$
where the convergence is in distribution in the topology of uniform convergence on compact sets.
\end{corollary}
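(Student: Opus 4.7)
\textbf{Proof plan for Corollary~\ref{c.local limit local time}.} The plan is to reduce the statement to Lemma~\ref{l.local limit BM} via the continuous mapping theorem and then use L\'evy's identity to identify the limit with $\cL$. The rescaled running maximum satisfies, after the substitution $s = \varepsilon u$,
\begin{equation*}
\varepsilon^{-1/2} Y(\varepsilon t) \;=\; \varepsilon^{-1/2}\max_{0 \le s \le \varepsilon t} B'(s) \;=\; \max_{0 \le u \le t} \varepsilon^{-1/2} B'(\varepsilon u).
\end{equation*}
So if we denote $B'_\varepsilon(u) := \varepsilon^{-1/2} B'(\varepsilon u)$, then $\varepsilon^{-1/2} Y(\varepsilon \,\cdot\,)$ is exactly the running maximum functional applied to $B'_\varepsilon$.

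Lemma~\ref{l.local limit BM} applies directly to $B'$: it gives that $B'_\varepsilon \to B$ in distribution as $\varepsilon \to 0$, in the topology of uniform convergence on compact subsets of $[0,\infty)$. The running maximum map $\Phi : \mc C([0,\infty)) \to \mc C([0,\infty))$ defined by $\Phi(f)(t) = \max_{0 \le s \le t} f(s)$ is continuous in this topology, since $|\Phi(f)(t) - \Phi(g)(t)| \le \sup_{0\le s\le t}|f(s)-g(s)|$ on any compact $[0,T]$. Hence by the continuous mapping theorem,
\begin{equation*}
\varepsilon^{-1/2} Y(\varepsilon \,\cdot\,) \;=\; \Phi(B'_\varepsilon) \;\xrightarrow{d}\; \Phi(B)(t) \;=\; M(t),
\end{equation*}
where $M(t) = \max_{0 \le s \le t} B(s)$ is the running maximum of the rate $\sigma^2$ Brownian motion $B$.

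Finally, by L\'evy's identity (Proposition~\ref{p.levy identity}), $M$ and $\cL$ are equal in distribution as processes on $[0,\infty)$. Combining this with the previous display gives
\begin{equation*}
\lim_{\varepsilon \to 0} \varepsilon^{-1/2} Y(\varepsilon t) \;=\; \cL(t)
\end{equation*}
in distribution in the topology of uniform convergence on compact subsets of $[0,\infty)$, which is the claim. There is no real obstacle here: the only point requiring any attention is verifying that the topology on which Lemma~\ref{l.local limit BM} provides convergence is one in which the running maximum functional is continuous, but both are the topology of uniform convergence on compacts, so the continuous mapping theorem applies without any additional work.
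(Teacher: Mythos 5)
Your proof is correct and follows exactly the same route as the paper's: rescale the running maximum, invoke Lemma~\ref{l.local limit BM} together with continuity of the running-maximum map and the continuous mapping theorem, then apply L\'evy's identity.
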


\begin{proof}
We see that
$
Y(\varepsilon t) = \max_{0\leq s\leq \varepsilon t} B'(s) = \max_{0\leq s\leq t} B'(\varepsilon s).
$
It is an easy exercise that the map taking a function defined on $[0,\infty)$ to its running maximum on $[0,\infty)$ is continuous in the topology of uniform convergence on compact sets. Then by Lemma~\ref{l.local limit BM} and the continuous mapping theorem,
$$\lim_{\varepsilon\to 0} \varepsilon^{-1/2}Y(\varepsilon t) = \max_{0\leq s\leq t} B(s),$$
where the limit is in distribution in the same topology. L\'evy's identity (Proposition~\ref{p.levy identity}) completes the proof of the corollary.
\end{proof}



\subsection{Brownian Gibbs at nice random times}
In the proof of Theorem~\ref{t.local limit}, we will need to understand the parabolic Airy line ensemble $\cP$ at the random time $\tau_\lambda$. For this we recall from \cite{corwin2014brownian} the concept of a stopping domain and the associated strong Brownian Gibbs property of $\cP$.

\begin{definition}[Stopping domain and strong Brownian Gibbs]
Let $X:\N\to\R$ be an $\N$-indexed collection of continuous curves. Recall the $\sigma$-algebra $\F_{\mathrm{ext}}(k, \ell, r)$ from Definition~\ref{d.bg} generated by the data external to the top $k$ curves of $X$ on $[\ell, r]$. A pair of random variables $\mf l, \mf r$ is a \emph{stopping domain} for $X_1, \ldots, X_k$ if, for all $\ell < r$,
$$\{\mf l \leq \ell, \mf r \geq r\} \in \F_{\mathrm{ext}}(k,\ell, r).$$
Define the $\sigma$-algebra $\F_{\mathrm{ext}}(k,\mf l, \mf r)$ to be the one generated by events $A$ such that $A\cap \{\mf l\leq \ell, \mf r\geq r\} \in \F_{\mathrm{ext}}(k,\ell, r)$ for all $\ell < r$.

An infinite collection of random continuous curves $X$ has the \emph{strong Brownian Gibbs} property if, for any $k\in\N$ and conditionally on $\F_{\mathrm{ext}}(k,\mf l,\mf r)$, the distribution of $X$ on $\intint{1,  k}\times[\mf l, \mf r]$ is that of $k$ independent Brownian bridges $(B_1, \ldots, B_k)$ of rate \emph{two}, with $B_i(x) = X_i(x)$ for $x\in\{\mf l, \mf r\}$ and $i\in\intint{1, k}$, conditioned on not intersecting each other or $X_{k+1}(\cdot)$ on $[\mf l, \mf r]$.
\end{definition}

\begin{proposition}[Lemma 2.5 of \cite{corwin2014brownian}]\label{p.strong bg}
The parabolic Airy line ensemble $\cP$ has the strong Brownian Gibbs property.
\end{proposition}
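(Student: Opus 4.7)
The plan is to deduce Proposition~\ref{p.strong bg} from the ordinary Brownian Gibbs property of $\cP$ via a dyadic discretization of the pair $(\mf l, \mf r)$, in close analogy with the standard derivation of the strong Markov property from the Markov property. Throughout I would fix $k\in\N$ and work only with the top $k$ curves.

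For each $n\in\N$ and each pair of dyadic rationals $a,b\in 2^{-n}\Z$ with $a<b$, consider the event
\[
E^n_{a,b} = \bigl\{\,a = 2^{-n}\lceil 2^n \mf l\rceil,\ b = 2^{-n}\lfloor 2^n \mf r\rfloor\,\bigr\},
\]
on which $\mf l\in (a - 2^{-n}, a]$, $\mf r\in [b, b+2^{-n})$, and in particular $[a,b]\subseteq[\mf l,\mf r]$. A short verification using the defining property $\{\mf l\le\ell,\,\mf r\ge r\}\in\F_{\mathrm{ext}}(k,\ell,r)$ together with the monotonicity $\F_{\mathrm{ext}}(k,\ell',r')\subseteq\F_{\mathrm{ext}}(k,\ell,r)$ whenever $[\ell,r]\subseteq[\ell',r']$ shows that $E^n_{a,b}\in\F_{\mathrm{ext}}(k,a,b)$. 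On $E^n_{a,b}$, the ordinary Brownian Gibbs property applied on the deterministic interval $[a,b]$ yields that, conditionally on $\F_{\mathrm{ext}}(k,a,b)$, the top $k$ curves on $[a,b]$ are distributed as $k$ independent rate-$2$ Brownian bridges with the prescribed boundary values, conditioned to avoid each other and $\cP_{k+1}$.

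To pass to the strong Brownian Gibbs statement I would let $n\to\infty$: almost surely $a\downarrow\mf l$ and $b\uparrow\mf r$, and by continuity of each $\cP_i$ the boundary values at $a,b$ converge to those at $\mf l,\mf r$, so the unconditioned Brownian bridge laws on $[a,b]$ converge weakly to the bridge law on $[\mf l,\mf r]$ with the limiting endpoints. The main obstacle is the non-intersection conditioning, which is a conditioning on a null event. The cleanest way to handle it is to fix an enclosing deterministic interval $[\ell_0,r_0]$ and restrict to the positive-probability event $\{[\mf l,\mf r]\subseteq[\ell_0,r_0]\}$; on that event one rewrites the conditioned bridge law on $[a,b]$ using its explicit Radon--Nikodym factor against the unconditioned bridge on $[\ell_0,r_0]$ (given its values at $a$ and $b$), applies dominated convergence as $n\to\infty$, and then sends $\ell_0\downarrow-\infty$ and $r_0\uparrow\infty$. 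Summing the resulting identities over the partition $\{E^n_{a,b}\}$ and integrating against arbitrary bounded $\F_{\mathrm{ext}}(k,\mf l,\mf r)$-measurable test functions yields the strong Brownian Gibbs property as stated.
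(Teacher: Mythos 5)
The paper itself does not prove this proposition: it cites Lemma~2.5 of \cite{corwin2014brownian}, and the remark that follows is concerned only with why the cited finite-$N$ argument extends to the full infinite ensemble. Your proposal re-derives the statement directly via dyadic discretization, which is precisely the strong-Markov-from-Markov device the cited proof uses, so strategically you are on the route the paper defers to; and because you work from the outset with $\F_{\mathrm{ext}}(k,\ell,r)$, which treats all lower-indexed curves as external data, your set-up absorbs the infinite-ensemble point without needing the remark's separate discussion. The discretization and the verification that $E^n_{a,b}\in\F_{\mathrm{ext}}(k,a,b)$ (writing it as $\{\mf l\le a,\mf r\ge b\}$ minus the two events where the margin of $2^{-n}$ is overshot) are correct.

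The one step that does not hold up as written is the limit $n\to\infty$. You propose to compare the conditioned $[a,b]$-bridge against ``the unconditioned bridge on $[\ell_0,r_0]$ given its values at $a$ and $b$,'' extract a Radon--Nikodym factor, and apply dominated convergence. But that reference law is a bridge from $\cP(a)$ to $\cP(b)$ on $[a,b]$, which itself depends on $n$, so there is no fixed dominating measure; and a bridge on $[\ell_0,r_0]$ conditioned to pass through prescribed interior values is mutually singular with the unconditioned one, so the stated density does not exist in the form you suggest. What is actually needed at this point is a direct weak-convergence statement: extend each non-intersection-conditioned $[a,b]$-bridge law by $\cP$ on $[\mf l,a]\cup[b,\mf r]$ to a law on $\mc C([\mf l,\mf r])^k$; use continuity of $\cP$ for convergence of the endpoints; note that the gaps $\cP_i-\cP_{i+1}$ are strictly positive at $\mf l$ and $\mf r$, so the non-intersection acceptance probabilities are bounded away from zero for large $n$; and conclude weak convergence to the non-intersecting bridge law on $[\mf l,\mf r]$. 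This is standard but is a genuinely different mechanism from the Radon--Nikodym one your sketch invokes, and should be spelled out.
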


\begin{remark}
The definition of a stopping domain and the strong Brownian Gibbs property in \cite{corwin2014brownian} considers ensembles with $N$ curves instead of infinitely many. We require the infinite case as our random time $\tau_\lambda$ is defined via all the curves of $\cP$. However, the proof given in \cite{corwin2014brownian} applies verbatim for the infinite case. 

{Alternatively, the $N=\infty$ case can be derived from the finite $N$ one with a certain augmentation to handle boundary data. Given a stopping domain $[\mf l, \mf r]$ defined in terms of the entire line ensemble, one can apply, for any $\delta>0$, the Brownian Gibbs property on the top $k$ curves on an interval $[-M, M]$ with $M$ chosen such that $[\mf l, \mf r]\subseteq [-M,M]$ with probability at least $1-\delta$. Conditionally on $\F_{\mathrm{ext}}(k, -M, M)$, $[\mf l, \mf r]$ is determined by the conditioned data and $\cP|_{[-M,M]}, \ldots, \cP_k|_{[-M,M]}$. Thus we may consider the $\F_{\mathrm{ext}}(k,-M,M)$-conditional ensemble $\cP|_{[-M,M]}, \ldots, \cP_k|_{[-M,M]}$. Note that this ensemble has the Brownian Gibbs property with the boundary condition $\cP_{k+1}$, i.e., the extra condition that $\cP_k|_{[-M,M]}$ remains above $\cP_{k+1}$.

Thus the finite $N$ case (with $N=k$) with a boundary condition given by the curve $\cP_{k+1}$, for which the argument in \cite{corwin2014brownian} goes through verbatim, can be applied to this ensemble defined on $\intint{1,k}\times[-M,M]$ to obtain the conclusion that $\cP_1|_{[\mf l, \mf r]}, \ldots, \cP_k|_{[\mf l, \mf r]}$ is distributed as non-intersecting Brownian bridges with appropriate endpoints, on the high-probability event that $[\mf l,\mf r]\subseteq [-M,M]$. Taking $\delta\to 0$ and $M\to\infty$ appropriately gives the conclusion.}
\end{remark}

\begin{remark}\label{r.right continuity}
In the proof of Theorem~\ref{t.local limit} we will want to apply the strong Brownian Gibbs property to an interval like $[\tau_\lambda, \tau_\lambda+1]$. But observe that this interval is actually not a stopping domain, for the same reason that $\tau_\lambda$ is not a stopping time with respect to the canonical filtration $\sigma(\cP_i(s) : s\leq t, i\in\N)$ of $\cP$: determining the occurrence of $\tau_\lambda$ requires an ``infinitesimal peak'' into the future.

To address this, we will actually need the strong Brownian Gibbs property to hold with respect to the analogue of right-continuous filtrations in this spatial setting. Indeed, for $\ell<r$ and $k\in\N$, consider the $\sigma$-algebra $\F_{\mathrm{ext}}(k, \ell^+, r^-)$ defined as
$$\F_{\mathrm{ext}}(k, \ell^+, r^-) = \bigcap_{n=1}^\infty \F_{\mathrm{ext}}(k, \ell+n^{-1}, r-n^{-1}).$$
To know that an ensemble $X$ has the strong Brownian Gibbs property with respect to the above family of augmented $\sigma$-algebras, it is sufficient to know that $X$ has the Brownian Gibbs property with respect to the same family. Indeed, the proof of Proposition~\ref{p.strong bg} as given in \cite{corwin2014brownian} goes through verbatim with $\F_{\mathrm{ext}}(k,\ell^+, r^-)$ in place of $\F_{\mathrm{ext}}(k,\ell, r)$ if it is known that $X$ has the Brownian Gibbs property with respect to the former family of $\sigma$-algebras.

To prove that $X$ having the Brownian Gibbs property with respect to $\F_{\mathrm{ext}}(k,\ell, r)$ implies the same with respect to $\F_{\mathrm{ext}}(k,\ell^+, r^-)$, it is sufficient to show a form of Blumenthal's zero-one law for the family of augmented $\sigma$-algebras, i.e., that almost surely for any $A\in \F_{\mathrm{ext}}(k, \ell^+, r^-)$, it holds that
\begin{equation}\label{e.spatial blumenthal}
\P\left(A  \mid \F_{\mathrm{ext}}(k, \ell, r)\right) \in \{0,1\}.
\end{equation}
(The argument to show $X$ has the Brownian Gibbs property with respect to $\F_{\mathrm{ext}}(k, \ell^+, r^-)$ via \eqref{e.spatial blumenthal} is analogous to the one showing that a process which is Markov with respect to its canonical filtration is also so with respect to the right-continuous filtration; see eg. \cite[Proposition~2.14]{revuz2013continuous}.)

That \eqref{e.spatial blumenthal} is true is easy to see. We may assume that $A$ is a function of only $\cP_1|_{[\ell, r]}, \dots, \cP_k|_{[\ell, r]}$, as everything else will be conditioned on. Conditionally on $\F_{\mathrm{ext}}(k, \ell, r)$, the distribution of $\cP_1, \ldots, \cP_k$ on $[\ell, r]$ is that of $k$ independent Brownian bridges with boundary values determined by $\F_{\mathrm{ext}}(k, \ell^+, r^-)$, conditioned not to intersect one another or $\cP_{k+1}$. Since the boundary values are almost surely separated, the Brownian bridges are conditioned on a positive probability event. Further, $A$ lies in the $\sigma$-algebra generated by the germ $\sigma$-algebras of the Brownian bridges as well as their time reversals (so as to capture behaviour near $r$). We may assume that it lies in exactly one of these $\sigma$-algebras. Then by Blumenthal's zero-one law applied to these Brownian bridges (or  their time reversals), \eqref{e.spatial blumenthal} follows.
\end{remark}

\subsection{The local limit argument: proving Theorem~\ref{t.local limit}}
{We begin by proving a representation for $\wdf(\tau + \varepsilon t) - \wdf(\tau)$ at a point of increase $\tau$, using Proposition~\ref{p.Z_1 Z_3 never coincide} which states that the boundary data $Z^{\lambda, b}_3$ of the third curve never equals that of the first curve, $Z^{\lambda, b}_1$.

\begin{lemma}\label{l.point of inc rep}
Let $\tau$ be a random variable which is almost surely a point of increase of $\wdf$, and let $\mc K\subseteq [0,\infty)$ be a compact set. There exists random $\varepsilon_0 = \varepsilon_0(\tau, \mc K)>0$ such that, almost surely, for $0<\varepsilon<\varepsilon_0$ and $t\in\mc K$, 
\begin{equation}\label{e.point of inc representation}
\wdf(\tau+\varepsilon t) - \wdf(\tau) = \max_{0\leq s\leq t}\Bigl[\bigl(\cP_2(\tau+\varepsilon s) - \cP_2(\tau)) - (\cP_1(\tau+\varepsilon s) - \cP_1(\tau)\bigr)\Bigr].
\end{equation}
\end{lemma}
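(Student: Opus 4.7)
The plan is to apply Lemma~\ref{l.S Z^lambda representation} to write $\S(y_b, \tau + \varepsilon t)$ as a supremum of weighted LPP values emanating from the vertical line at $\tau$, then reduce this supremum to just the indices $\{1, 2\}$ using Proposition~\ref{p.Z_1 Z_3 never coincide}, and finally exploit the point-of-increase hypothesis to simplify further. Fix a deterministic $\lambda \in \R$; it suffices to prove the formula on the event $\{\tau > \lambda\}$, since the almost sure statement then follows by letting $\lambda \to -\infty$ along a countable sequence (as $\tau$ is almost surely finite). Setting $y = \tau - \lambda > 0$ in Lemma~\ref{l.S Z^lambda representation} gives
$$\S(y_b, \tau + \varepsilon t) = \sup_{i \in \N}\Bigl\{Z^{\lambda, b}_i(\tau - \lambda) + \cP\bigl[(\tau, i) \to (\tau + \varepsilon t, 1)\bigr]\Bigr\},$$
and by reasoning analogous to Lemma~\ref{l.i j properties} and Proposition~\ref{p.maximizer index is finite for general i}, the maximizing index is almost surely bounded by some random finite $K$ uniformly for $\varepsilon t$ in any fixed compact set.

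I would then invoke Proposition~\ref{p.Z_1 Z_3 never coincide} to argue that, for $\varepsilon$ small, the supremum is actually achieved in $\{1, 2\}$. Setting $\delta = Z^{\lambda, b}_1(\tau - \lambda) - Z^{\lambda, b}_3(\tau - \lambda) > 0$, one checks that $\cP[(\tau, i) \to (\tau + \varepsilon t, 1)]$ for $3 \leq i \leq K$ vanishes as $\varepsilon \to 0$, uniformly in $t \in \mc K$, by uniform continuity of the finitely many curves $\cP_1, \ldots, \cP_K$ on $[\tau, \tau + \sup\mc K]$; since $\cP_1(\tau + \varepsilon t) - \cP_1(\tau) \to 0$ as well, the index-$i$ term lags behind the index-$1$ term by at least $\delta/2$ once $\varepsilon$ is small. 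For $i = 2$ the standard two-line LPP identity gives
$$\cP\bigl[(\tau, 2) \to (\tau + \varepsilon t, 1)\bigr] = \cP_1(\tau + \varepsilon t) - \cP_2(\tau) + \max_{0 \leq s \leq \varepsilon t}\bigl(\cP_2(\tau + s) - \cP_1(\tau + s)\bigr),$$
while the $i = 1$ term is simply $Z^{\lambda, b}_1(\tau - \lambda) + \cP_1(\tau + \varepsilon t) - \cP_1(\tau)$. Subtracting $\cP_1(\tau + \varepsilon t)$ to pass from $\S(y_b, \cdot)$ to $\wdf$ (using $\S(y_a, \cdot) = \cP_1$), and then subtracting $\wdf(\tau) = Z^{\lambda, b}_1(\tau - \lambda) - \cP_1(\tau)$, yields
$$\wdf(\tau + \varepsilon t) - \wdf(\tau) = \max\Bigl\{0, -A + \max_{0 \leq s \leq \varepsilon t}\bigl[\bigl(\cP_2(\tau + s) - \cP_2(\tau)\bigr) - \bigl(\cP_1(\tau + s) - \cP_1(\tau)\bigr)\bigr]\Bigr\},$$
where $A := Z^{\lambda, b}_1(\tau - \lambda) - Z^{\lambda, b}_2(\tau - \lambda) \geq 0$.

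The main obstacle is establishing $A = 0$, which is where the point-of-increase hypothesis enters decisively. If $A > 0$, uniform continuity of $\cP_1$ and $\cP_2$ would force the inner running maximum to be strictly less than $A$ for all sufficiently small $\varepsilon t > 0$, so that the outer $\max\{0, \cdot\}$ collapses to $0$ throughout $[\tau, \tau + \varepsilon\sup\mc K]$; but this contradicts the (right-sided) point-of-increase property at $\tau$, which one verifies separately as an almost sure statement for each of the four choices of $\tau$ in Theorem~\ref{t.local limit}. Once $A = 0$, the outer $\max\{0, \cdot\}$ is redundant because the inner running maximum is nonnegative (it vanishes at $s = 0$), and the change of variables $s = \varepsilon s'$ delivers exactly~\eqref{e.point of inc representation}. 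Conceptually, the proof combines Proposition~\ref{p.Z_1 Z_3 never coincide} (which reduces the relevant local description of $\S(y_b,\cdot)$ near $\tau$ to a two-line LPP problem) with the point-of-increase hypothesis (which forces the $y_b$ infinite geodesic to jump precisely from line $2$ to line $1$ at $\tau$, i.e., $Z^{\lambda, b}_1(\tau - \lambda) = Z^{\lambda, b}_2(\tau - \lambda)$).
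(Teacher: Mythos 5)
Your proposal is correct and follows essentially the same route as the paper: both apply Lemma~\ref{l.S Z^lambda representation} at $y=\tau-\lambda$, use Proposition~\ref{p.Z_1 Z_3 never coincide} plus continuity and the finiteness of maximizing indices to reduce the supremum to $i\in\{1,2\}$ for $\varepsilon<\varepsilon_0$, and then invoke the point-of-increase hypothesis to conclude $Z^{\lambda,b}_1(\tau-\lambda)=Z^{\lambda,b}_2(\tau-\lambda)$. The only cosmetic difference is that you deduce $A=0$ directly from the explicit two-line formula by contradiction, whereas the paper packages the same fact as the maximizing index being $2$ via the monotonicity statement of Lemma~\ref{l.i j properties}; these are logically equivalent.
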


\begin{proof}
Fix $\lambda\in \R$. We will show the representation \eqref{e.point of inc representation} on the event that $\tau > \lambda$, which will suffice as we may take $\lambda\to-\infty$ at the end.

Let $t\in\mc K$. Using Lemma~\ref{l.S Z^lambda representation} with $y=\tau$ and $x=\tau+\varepsilon t$, we see
\begin{align*}
\wdf(\tau+\varepsilon t) &= \sup_{i\in\N} \left\{Z^{\lambda, b}_i(\tau-\lambda) + \cP[(\tau, i)\to (\tau+\varepsilon t, 1)]\right\} -  \cP_1(\tau+\varepsilon t).
\end{align*}
Note that, by Proposition~\ref{p.Z_1 Z_3 never coincide}, $Z^{\lambda,b}_1(\tau-\lambda) > Z^{\lambda,b}_3(\tau-\lambda)$. Since by Lemma~\ref{l.S Z^lambda representation} the supremum in the last displayed equation is attained at finite indices, the continuity of $t\mapsto \cP[(\tau, i)\to (\tau+\varepsilon t, 1)]$ for each $i$ implies that there exists random $\varepsilon_0>0$ (depending on the supremum-achieving indices and on the compact set $\mc K$) such that, for all $0<\varepsilon<\varepsilon_0$ and $t\in\mc K$,
\begin{align*}
\wdf(\tau+\varepsilon t) &= \max_{i=1,2} \left\{Z^{\lambda, b}_i(\tau-\lambda) + \cP[(\tau, i)\to (\tau+\varepsilon t, 1)]\right\} -  \cP_1(\tau+\varepsilon t)
\end{align*}
Now by definition $\wdf(\tau) = Z^{\lambda,b}_1(\tau-\lambda)- \cP_1(\tau)$. We also know, since $\tau$ is a point of increase of $\wdf$ and by the monotonicity (in $t$) of the maximizing index as proven in Lemma~\ref{l.i j properties}, that the maximum in the previous display is attained at $i=2$ for all $0<\varepsilon<\varepsilon_0$ and $t\in\mc K$; this implies by continuity that $\smash{Z^{\lambda, b}_1(\tau-\lambda) = Z^{\lambda, b}_2(\tau-\lambda)}$. It also yields Lemma~\ref{l.point of inc rep} since
\begin{align*}
\wdf(\tau+\varepsilon t) - \wdf(\tau) &= \cP[(\tau,2)\to (\tau+\varepsilon t,1)] - \cP[(\tau,1)\to (\tau+\varepsilon t,1)]\\
&= \max_{0\leq s\leq t}\Bigl(\bigl(\cP_2(\tau+\varepsilon s) - \cP_2(\tau)) - (\cP_1(\tau+\varepsilon s) - \cP_1(\tau)\bigr)\Bigr).
\qedhere
\end{align*}

\end{proof}

\begin{proof}[Proof of Theorem~\ref{t.local limit}]
We must prove that $\varepsilon^{-1/2} (\wdf(\tau + \varepsilon t) - \wdf(\tau))$ converges weakly, as a process on any compact set, to $\cL$, for $\tau$ equal to $\tau_\lambda$, $\smash{\rho^h}$, or $\xi_{[c,d]}$ (the last conditionally on the event that $\NC(\wdf)\cap[c,d]\neq \emptyset$. We start by fixing a compact set $\mc K$.


We will prove in Lemma~\ref{l.random times are points of increase} ahead that all of the types of random variables listed are almost surely points of increase of $\wdf$. Taking this statement for granted, by Lemma~\ref{l.point of inc rep} we have, for each of the three cases of the definition of $\tau$,
\begin{equation}\label{e.local limit prelimit}
\varepsilon^{-1/2}\left(\wdf(\tau+\varepsilon t) - \wdf(\tau)\right) = \varepsilon^{-1/2}\max_{0\leq s\leq t}\Bigl[\bigl(\cP_2(\tau+\varepsilon s) - \cP_2(\tau)) - (\cP_1(\tau+\varepsilon s) - \cP_1(\tau)\bigr)\Bigr].
\end{equation}
We \emph{claim} that the expression being maximized on the righthand side is absolutely continuous, as a process in $s$, to rate four Brownian motion, for each case of $\tau$. Given the claim, Theorem~\ref{t.local limit} follows immediately by applying Corollary~\ref{c.local limit local time}.

It remains to prove the claim for each case of $\tau$.

Now, $\tau_\lambda$, $\rho^h$, and $\rho_c^h$ are each stopping times with respect to the augmented filtration generated by $(\cP_i)_{i\in\N}$ as defined in Remark~\ref{r.right continuity}. Thus it follows that $[\tau, \tau+2]$ is a stopping domain in these two cases. Then, by the strong Brownian Gibbs property and Remark~\ref{r.right continuity}, an argument as in Corollary~\ref{c.abs cont} implies that $\cP_i(\tau+\varepsilon s) - \cP_i(\tau)$ for $i=1$ and $2$ are jointly absolutely continuous to two independent rate two Brownian motions. This implies the claim in these two cases.

So it only remains to prove the claim for $\xi_{[c,d]}$, on the event that $\NC(\wdf)\cap[c,d]\neq\emptyset$. We start by observing a convenient representation for $\xi_{[c,d]}$. Namely, let $U$ be an uniform random variable on the interval $[0, \wdf(d)-\wdf(c)]$, which is conditionally independent of $\wdf$ given $\wdf(d)-\wdf(c)$, and let 
$$\rho^U_c = \inf\bigl\{t > c : \wdf(t) = \wdf(c) + U\bigr\};$$
the infimum is over a non-empty set on the event that $\wdf(d) > \wdf(c)$, i.e., $\NC(\wdf)\cap[c,d] \neq \emptyset$. It is easy to see that $\rho^U_c$ has the same distribution as $\xi_{[c,d]}$ by the definition of the distribution function of the latter. {Also recall that $\smash{\rho^h_c}$ is defined analogously to $\smash{\rho^U_c}$ with $h\geq 0$ in place of $U$, and that $\rho^h_c$ is a stopping time with respect to $\cP$.}

Let $\cP'_U(s) = (\cP_1(\rho^U_c + s) - \cP_1(\rho^U_c), \cP_2(\rho^U_c + s) - \cP_2(\rho^U_c))$ for notational convenience, and $\cP'_h$ be defined with $\rho^h_c$ in place of $\rho^U_c$. We must show that $\cP'_U \ll (B_1, B_2)$, with $B_1$ and $B_2$ independent rate two Brownian motions. So let $A\subseteq \mc C([0,1])$ be an event such that $\P( (B_1, B_2) \in A) = 0$. Now,
\begin{align*}
\P\left(\cP'_U\in A, \wdf(d)-\wdf(c)>0\right) &= \E\left[\P\left(\cP'_U\in A\mid U, \wdf(d) - \wdf(c)\right)\one_{\wdf(d)-\wdf(c)>0}\right] \\
&= \E\left[\int_0^{\wdf(d) - \wdf(c)}\frac{\P\left(\cP'_h\in A\mid \wdf(d) - \wdf(c)\right)}{\wdf(d)-\wdf(c)}\one_{\wdf(d)-\wdf(c) > 0}\, dh\right]\\
&\leq \E\left[\int_0^{\infty}\sum_{k=1}^{\infty}\frac{\P\left(\cP'_h\in A\mid \wdf(d) - \wdf(c)\right)}{\wdf(d)-\wdf(c)}\one_{\wdf(d)-\wdf(c) \in[(k+1)^{-1}, k^{-1}]}\, \mathrm dh\right]\\
&\leq \int_0^{\infty}\sum_{k=1}^{\infty}(k+1)\P\left(\cP'_h\in A\right)\, dh = 0.
\end{align*}
In the final line, we used Fubini's theorem and, in the final equality, that $\cP'_h \ll (B_1,B_2)$ (and so $\P(\cP'_h\in A) = 0$) as above since $\rho^h_c$ is a stopping time. This along with the fact that  $\wdf(d)-\wdf(c)>0$ with positive probability which is proved next, completes the proof of the claim in the case that $\tau = \xi_{[c,d]}$ and the proof of Theorem~\ref{t.local limit}.
\end{proof}
} 

\begin{remark}
The above proof in the case of $\xi_{[c,d]}$ can be easily adapted to prove that the local limit of Brownian local time $\cL$ at a point sampled according to $\cL$ on a compact interval is also $\cL$, a statement for which we were unable to find a reference in the literature.
\end{remark}

{
\begin{lemma}[Positive probability of $\NC(\wdf)\cap{[c,d]}\neq\emptyset$]\label{r.positive prob of non-empty}
Fix $d>c$. Then $\wdf(d)-\wdf(c)>0$ with positive probability.
\end{lemma}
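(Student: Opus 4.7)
The plan is to show that $\wdf(d) - \wdf(c)$ is a non-negative random variable with strictly positive expectation; this immediately forces it to be strictly positive with positive probability.

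Non-negativity is immediate from Lemma~\ref{l.W increasing}, which gives that $\wdf$ is almost surely non-decreasing, so $\wdf(d) - \wdf(c) \geq 0$ almost surely. The real content is computing the expectation. For this, the key input---already invoked in the proof of Lemma~\ref{l.strong non-degeneracy of E}---is that the compensated parabolic Airy sheet $\S^\cup(y,x) := \S(y,x) + (y-x)^2$ has the same distribution (the GUE Tracy--Widom distribution, which has all moments finite) for every fixed $(y,x)\in\R^2$. Writing $C$ for the common finite expectation, this gives $\E[\S(y,x)] = C - (y-x)^2$.

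Using the standing assumption $y_a = 0$, the constants and the $x^2$ contributions cancel, leaving
$$
\E[\wdf(x)] = \E[\S(y_b,x)] - \E[\S(0,x)] = -(y_b - x)^2 + x^2 = 2 y_b x - y_b^2.
$$
Taking the increment across $[c,d]$ then yields
$$
\E\bigl[\wdf(d) - \wdf(c)\bigr] = 2 y_b (d-c),
$$
which is strictly positive since $y_b > 0$ and $d > c$. Combined with the almost sure non-negativity from Lemma~\ref{l.W increasing}, the claim follows.

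There is essentially no serious obstacle here; the argument is a direct consequence of the parabolic stationarity of $\S^\cup$ that was already exploited in the proof of Lemma~\ref{l.strong non-degeneracy of E}. The only fact used beyond what has already been cited in the paper is the integrability of the GUE Tracy--Widom distribution, which is standard.
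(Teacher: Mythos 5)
Your argument is essentially identical to the paper's: both use the compensated sheet $\S^\cup(y,x) = \S(y,x) + (y-x)^2$ being identically distributed for all fixed $(y,x)$ to show that $\E[\wdf(d)-\wdf(c)]$ equals the purely parabolic contribution $2(y_b-y_a)(d-c) > 0$, and then combine this with the almost sure non-negativity of the increment from Lemma~\ref{l.W increasing}. The only cosmetic difference is that you specialize to $y_a = 0$ (the standing convention) and explicitly flag the integrability of the Tracy--Widom law, which the paper leaves implicit.
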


\begin{proof}
As in the proof of Lemma~\ref{l.strong non-degeneracy of E}, we can write, where $\S^{\cup}(y,x) = \S(y,x) + (y-x)^2$,
\begin{align*}
\wdf(d)-\wdf(c) &= \S^\cup(y_b, d) - \S^\cup(y_a,d) - (\S^{\cup}(y_b,c) - \S^{\cup}(y_a,c))\\
&\qquad - (y_b-d)^2 + (y_b-c)^2 + (y_a-d)^2 - (y_a-c)^2.
\end{align*}
Since $\S^{\cup}(y,x)$ has the same distribution for every fixed $y,x\in\R$, taking expectations shows that $\E[\wdf(d)-\wdf(c)] > 0$. That $\wdf(d)-\wdf(c)\geq 0$ almost surely completes the proof.
\end{proof}

We next prove the small missing step of the proof of Theorem~\ref{t.local limit}, that the random locations in question are almost surely points of increase of $\wdf$, before turning to the larger task of proving Proposition~\ref{p.Z_1 Z_3 never coincide}.
}

{
\begin{lemma}\label{l.random times are points of increase}
Each of $\tau_\lambda$, $\rho^h$, $\rho_c^h$, and $\xi_{[c,d]}$ is almost surely a point of increase (the final conditionally on $\NC(\wdf)\cap[c,d]\neq\emptyset$).
\end{lemma}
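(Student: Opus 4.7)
The plan is to handle each random time with a short case-specific argument, using continuity and monotonicity of $\wdf$ (Lemma~\ref{l.W increasing}) together with Lemma~\ref{l.strong non-degeneracy of E}, which gives $\wdf(M)\to\pm\infty$ as $M\to\pm\infty$ a.s.

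For $\tau_\lambda$, I would first note that $\NC(\wdf)\cap(\lambda,\infty)$ is a.s.\ non-empty, since otherwise $\wdf$ would be locally constant at every point of $(\lambda,\infty)$ and hence, by connectedness, globally constant there, contradicting $\wdf(M)\to\infty$. Since $\NC(\wdf)$ is closed by Definition~\ref{d.pt of increase}, the infimum $\tau_\lambda$ of this set belongs to $\NC(\wdf)$ provided $\tau_\lambda>\lambda$. The strict inequality $\tau_\lambda>\lambda$ says that $\wdf$ is a.s.\ constant on $[\lambda,\lambda+\epsilon]$ for some random $\epsilon>0$; I would deduce this directly from Theorem~\ref{t.patchwork} with $[c,d]=[\lambda,\lambda+1]$: the increment $\wdf(\cdot)-\wdf(\lambda)|_{[\lambda,\lambda+1]}$ is absolutely continuous to $\cL(\cdot)-\cL(1)|_{[1,2]}$, and the latter is a.s.\ constant on some $[1,1+\epsilon]$ because $B(1)\neq 0$ a.s.\ for the underlying Brownian motion $B$, so the zero set of $B$ (i.e., the support of $\cL$) is away from $1$ in a random right-neighborhood.

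For $\rho^h$ and $\rho_c^h$, a single observation suffices: Lemma~\ref{l.strong non-degeneracy of E} and continuity give a.s.\ finiteness and $\wdf(\rho_c^h)=\wdf(c)+h$. Were $\wdf$ constant on some $(\rho_c^h-\epsilon,\rho_c^h+\epsilon)$, it would equal $\wdf(c)+h$ throughout; for $\epsilon$ small enough that $\rho_c^h-\epsilon>c$ (available since $h>0$ forces $\rho_c^h>c$), any $t\in(\rho_c^h-\epsilon,\rho_c^h)$ would yield $\wdf(t)=\wdf(c)+h$, contradicting the minimality of $\rho_c^h$. The case of $\rho^h$ is identical, using any $M$ with $\wdf(M)<h$ in place of $c$.

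Finally, for $\xi_{[c,d]}$, I would invoke the representation $\xi_{[c,d]}\stackrel{d}{=}\rho_c^U$ with $U$ uniform on $[0,\wdf(d)-\wdf(c)]$ conditionally independent of $\wdf$, already used in the proof of Theorem~\ref{t.local limit}. Conditionally on $\{\wdf(d)>\wdf(c)\}=\{\NC(\wdf)\cap[c,d]\neq\emptyset\}$, one has $U>0$ a.s., and the argument above applies to $\rho_c^h$ pathwise in $\wdf$ for every deterministic $h\in(0,\wdf(d)-\wdf(c)]$, so a standard conditioning argument completes the proof. No step presents a real obstacle; the cleanest subtlety is the strict inequality $\tau_\lambda>\lambda$, which as indicated follows from Theorem~\ref{t.patchwork}.
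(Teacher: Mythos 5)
There is a genuine gap in your treatment of $\rho^h$ and $\rho_c^h$, which then propagates to $\xi_{[c,d]}$. The minimality argument you give only rules out $\wdf$ being constant on a \emph{two-sided} neighbourhood of $\rho_c^h$, i.e., it only shows $\rho_c^h\in\NC(\wdf)$. But what the lemma requires (and what Theorem~\ref{t.local limit} needs, since the local limit is $\cL$ which increases immediately at the origin, and the proof of Lemma~\ref{l.point of inc rep} relies on it to force the maximizing index to be $2$ rather than $1$) is that $\wdf$ strictly increases in every right-neighbourhood of $\tau$. Your argument does not exclude the possibility that $\wdf$ is constant on some $[\rho_c^h,\rho_c^h+\delta)$ at level $\wdf(c)+h$; minimality says nothing about what happens to the right of $\rho_c^h$. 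As a sanity check, note that your argument applied verbatim to the Cantor function in place of $\wdf$ would ``show'' that the first hitting time of level $1/2$ is a point of increase, yet that hitting time is $1/3$ and the Cantor function is flat on $[1/3,2/3]$.

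The paper handles exactly this right-flatness case, and the two hitting-time cases turn out to need genuinely different arguments. For $\rho^h$, flatness to the right at the deterministic level $h$ would force $\wdf(q)=h$ for some rational $q$, and a union bound over rationals reduces matters to showing $\wdf(q)$ is a continuous random variable; this follows from the symmetry $\S(y,x)\stackrel{d}{=}\S(-x,-y)$, stationarity of $\S$, and Corollary~\ref{c.abs cont}. For $\rho_c^h$, the level $\wdf(c)+h$ is random, so the continuous-law-at-a-fixed-time trick is unavailable; instead the paper shows $\P(\wdf(\rho_c^h+\varepsilon)=\wdf(c)+h,\ \rho_c^h<c+\varepsilon^{-1})=0$ for rational $\varepsilon>0$ by noting this event is a function of $\wdf(\cdot)-\wdf(c)$ on $[c,c+\varepsilon^{-1}]$ and applying Theorem~\ref{t.patchwork}, since the corresponding event has probability zero for the increment of Brownian local time. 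You will need one of these (or equivalent) inputs; the conclusion does not follow from continuity and monotonicity alone.

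Your treatment of $\tau_\lambda$ is correct though somewhat over-engineered (the paper simply says it holds by definition; the observation that $\wdf$ is constant on $[\lambda,\tau_\lambda]$ and $\tau_\lambda\in\NC(\wdf)$ immediately gives increase to the right, and this holds even if $\tau_\lambda=\lambda$, so the appeal to Theorem~\ref{t.patchwork} to get $\tau_\lambda>\lambda$ is not needed for this lemma). For $\xi_{[c,d]}$, your strategy via $\rho_c^U$ is workable in principle, but it inherits the gap from $\rho_c^h$. The paper takes a cleaner route here: $\xi_{[c,d]}$ is a non-constant point by construction, the non-constant points that fail to be points of increase are exactly the left endpoints of constancy intervals (a countable set), and the sampling measure $(\wdf(\cdot)-\wdf(c))/(\wdf(d)-\wdf(c))$ has no atoms, so $\xi_{[c,d]}$ almost surely avoids this countable set. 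That argument is self-contained and does not route through the stopping-time cases.
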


\begin{proof}
First, $\tau_\lambda$ is a point of increase by definition. 

Next we consider $\rho^h$. Observe that, on the event that $\rho^h$ is not a point of increase, there must be a non-trivial interval with left endpoint $\rho^h$ where $\wdf$ is flat, i.e., equals $h$. In particular, the mentioned event implies the existence of a $q\in\Q$ such that $\wdf(q) = h$. It is thus sufficient, by a union bound over rationals, to show that $\wdf(x)$ is a continuous random variable for any fixed $x\in\R$.

We will use the symmetry property of the parabolic Airy sheet $\S$ that $\S(y,x) \stackrel{d}{=} \S(-x,-y)$ as processes on $\R^2$ (see \cite[Lemma~9.1]{dauvergne2018directed}; this follows by a similar relation that holds in Brownian LPP) and the stationarity of $\S$ from item (i) of Definition~\ref{d.airy sheet}. Now,
\begin{align*}
\wdf(x) = \S(y_b,x) - \S(y_a,x) &\stackrel{d}{=} \S(-x,-y_b) - \S(-x,-y_a)\\
&\stackrel{d}{=} \S(0, x-y_b) - \S(0,x-y_a).
\end{align*}
By the coupling of $\S$ with the parabolic Airy$_2$ process $\cP_1$ from item (ii) of Definition~\ref{d.airy sheet}, we see that
$$\wdf(x) \stackrel{d}{=} \cP_1(x-y_b) - \cP_1(x-y_a).$$
That the righthand side is a continuous random variable is an immediate consequence of Corollary~\ref{c.abs cont} on the absolute continuity of increments of $\cP_1$ to Brownian motion.

For the case of $\rho_c^h$, it is enough to show that 
$\P(\wdf(\rho_c^h) = \wdf(\rho_c^h + \varepsilon)) = 0$
for each rational $\varepsilon>0$. Now, since $\rho_c^h < \infty$ almost surely (by Lemma~\ref{l.strong non-degeneracy of E}) and $\wdf(\rho_c^h) = \wdf(c) + h$, it is further sufficient to show that, for each rational $\varepsilon>0$,
$$\P\left(\wdf(\rho_c^h+\varepsilon) = \wdf(c) + h, \rho_c^h < c + \varepsilon^{-1}\right) = 0.$$
The event in this equation is a function of $\wdf(\cdot) - \wdf(c)$ on $[c, c+\varepsilon^{-1}]$. By Theorem~\ref{t.patchwork}, this process is absolutely continuous to $\smash{\cL(\cdot) - \cL(1)|_{[1,1+\varepsilon^{-1}]}}$. Since the event in the previous display has probability zero for the local time increment process, Theorem~\ref{t.patchwork} implies that the previous display holds and so $\rho_c^h$ is almost surely a point of increase of $\wdf$.

Finally, we consider $\xi_{[c,d]}$. By definition, $\xi_{[c,d]}$ is a non-constant point. Now observe that there are only countably many non-constant points of $\wdf$ that are not points of increase, again by considering a small interval to the right of any such point and invoking the denseness of rationals. Since the measure with distribution function $\wdf(\cdot)-\wdf(c)/(\wdf(d)-\wdf(c))$ has no atoms, $\xi_{[c,d]}$ almost surely does not lies in this countable subset of $\NC(\wdf)$. This completes the proof of Lemma~\ref{l.random times are points of increase}.
\end{proof}
}

It remains to prove Proposition~\ref{p.Z_1 Z_3 never coincide}, that $Z^{\lambda,b}_1$ is always strictly greater than $Z^{\lambda, b}_3$, which is the task of the next section. The proof will also make use of the earlier and yet-to-be-proved Proposition~\ref{p.maximizer index is finite for general i}, on the finiteness of the maximizing index in the definition of $Z^{\lambda,b}_i$, which we will prove in Section~\ref{s.Z^lambda index finite}.

\subsection{Proving Proposition~\ref{p.Z_1 Z_3 never coincide}}\label{s.Z_1 Z_3 never collide}

As mentioned, the proof relies on the fact that two-dimensional Brownian motion almost surely never hits any given point in the plane, which we recall precisely.

\begin{lemma}[Corollary 2.26 of \cite{morters2010brownian}]\label{l.2d BM zero}
Let $x, y\in\R^2$ and $B:[0,\infty)\to\R^2$ be two-dimensional Brownian motion begun at $x$. Almost surely, $\P(y\in\{B(t): t>0\}) = 0$.
\end{lemma}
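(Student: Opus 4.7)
My plan is to prove this classical polarity-of-points result for planar Brownian motion via standard potential theory, dealing with the case $x=y$ by a strong Markov restart. For $x\neq y$, the key observation is that $u(z):=\log|z-y|$ is harmonic on $\R^2\setminus\{y\}$. Given $0<r<|x-y|<R$, let $T_\rho:=\inf\{t\geq 0:|B(t)-y|=\rho\}$; each of $T_r, T_R$ is almost surely finite by the neighborhood recurrence of planar Brownian motion, and on the annulus $\{r\leq|z-y|\leq R\}$ the function $u$ is bounded, so by It\^o's formula the stopped process $u(B_{t\wedge T_r\wedge T_R})$ is a true martingale. Optional stopping at $T_r\wedge T_R$ gives
\[
\log|x-y| \;=\; \P(T_r<T_R)\log r+\P(T_r>T_R)\log R,
\]
which rearranges to $\P(T_r<T_R)=(\log R-\log|x-y|)/(\log R-\log r)$.

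Next, sending $r\downarrow 0$ with $R$ fixed yields $\P(T_r<T_R)\to 0$. Since $\{\exists t\in[0,T_R]:B(t)=y\}\subseteq\bigcap_{r>0}\{T_r<T_R\}$, the Brownian motion almost surely does not visit $y$ before exiting the disk of radius $R$ around $y$. Letting $R\uparrow\infty$ through a countable sequence and using that $T_R\to\infty$ almost surely (by continuity of Brownian paths, which are bounded on each compact time interval), the countable union bound delivers $\P(\exists t>0:B(t)=y)=0$ as claimed.

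For the remaining case $x=y$, I will reduce to the previous case by the strong Markov property: pick any sequence $\epsilon_n\downarrow 0$, and note that $B(\epsilon_n)$ has a Gaussian (hence Lebesgue-absolutely-continuous) distribution, so $B(\epsilon_n)\neq y$ almost surely. Applying the $x\neq y$ conclusion to the restarted Brownian motion after time $\epsilon_n$ shows that $B$ does not visit $y$ on $[\epsilon_n,\infty)$ almost surely, and a countable union over $n$ completes the argument. The conceptually key step---and the only substantive one---is the optional-stopping calculation via the logarithmic potential, which is what distinguishes the two-dimensional case from higher dimensions (where $|z-y|^{2-d}$ replaces $\log|z-y|$) and from the one-dimensional case (where points are visited).
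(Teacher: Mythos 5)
Your proof is correct, but note that the paper itself supplies no proof of this lemma: it is cited directly as Corollary 2.26 of M\"orters and Peres's book and invoked as a known classical fact. Your argument---the logarithmic-potential martingale $u(B_{t\wedge T_r\wedge T_R})$ with $u(z)=\log|z-y|$, optional stopping on the annulus to get the exit probability formula, sending $r\downarrow 0$ and then $R\uparrow\infty$ along a countable sequence, and finally the strong Markov restart at deterministic times $\epsilon_n\downarrow 0$ to handle $x=y$---is exactly the standard proof and, in particular, is essentially the one given in M\"orters--Peres. The one place that deserves a slightly more explicit word is the inclusion $\{\exists\, t\in[0,T_R]:B(t)=y\}\subseteq\{T_r<T_R\}$: if the path reaches $y$ at some time $t\le T_R$, then by the intermediate value theorem it crosses the circle of radius $r$ at some $s\le t\le T_R$, and since it cannot simultaneously lie on the circles of radii $r$ and $R$, we in fact get $T_r<T_R$; you state the conclusion correctly, but spelling out the continuity argument would tighten the exposition. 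Everything else is complete and correct.
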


The proof of Proposition~\ref{p.Z_1 Z_3 never coincide} will convert the condition that $\smash{Z^{\lambda, b}_1}$ and $\smash{Z^{\lambda, b}_3}$ meet into one on $\smash{Z^{\lambda, b}_3}$, $\cP_1$, and $\cP_2$. More precisely, we will use the recursive representation of $Z^{\lambda, b}_i$ stated in Lemma~\ref{l.Z_i^lambda recursion} to show that any point where $\smash{Z^{\lambda, b}_1}$ and $\smash{Z^{\lambda, b}_2}$ agree is a running maximizer of $\smash{Z^{\lambda, b}_2 - \cP^\lambda_1}$, and a similar statement for points of agreement of $\smash{Z^{\lambda, b}_2}$ and $\smash{Z^{\lambda, b}_3}$. Thus we introduce the following notation. For a stochastic process $X:[0,\infty)\to \R$ and $\varepsilon>0$, let $\rec_\varepsilon(X)\subseteq [\varepsilon,\infty)$ be the set of running maximizers of $X|_{[\varepsilon, \infty)}$, i.e., the points $x\geq \varepsilon$ such that $X(x) = \max_{\varepsilon\leq s\leq x} X(s)$.

\begin{proof}[Proof of Proposition~\ref{p.Z_1 Z_3 never coincide}]
It is enough to show that for every $0<\varepsilon<T$ rationals, there almost surely does not exist $\tau\in[\varepsilon,T]$ such that $Z^{\lambda, b}_1(\tau) = Z^{\lambda, b}_3(\tau)$, since $Z^{\lambda, b}_1(x)\geq Z^{\lambda, b}_3(x)$ for all $x\geq 0$. Fix such $\varepsilon$ and $T$ now.

Suppose to the contrary that $\tau\in[\varepsilon,T]$ is such that $Z^{\lambda, b}_{1}(\tau) = Z^{\lambda, b}_{2}(\tau) = Z^{\lambda, b}_{3}(\tau)$, since always $Z_1^\lambda(x) \geq Z_2^\lambda(x) \geq Z_3^\lambda(x)$. By Lemma~\ref{l.Z_i^lambda recursion} and the definition \eqref{e.G' defn} of $\PT$, $Z_2^\lambda(\tau) = Z_3^\lambda(\tau)$ implies that
\begin{align*}
Z^{\lambda, b}_3(\tau) - \cP^\lambda_2(\tau) &= b^\lambda_2\vee \max_{0\leq s\leq \tau} \left(Z^{\lambda, b}_3(s) - \cP^\lambda_2(s)\right)
\geq \max_{\varepsilon/2\leq s\leq \tau} \left(Z^{\lambda, b}_3(s) - \cP^\lambda_2(s)\right).
\end{align*}
Similarly, $Z^{\lambda, b}_1(\tau) = Z^{\lambda, b}_3(\tau)$ implies that
\begin{align*}
Z^{\lambda, b}_3(\tau) - \cP_1^\lambda(\tau) = b^\lambda_1\vee \max_{0\leq s\leq \tau} \left(Z^{\lambda, b}_2(s) - \cP^\lambda_1(s)\right)
\geq \max_{\varepsilon/2\leq s\leq \tau} \left(Z^{\lambda, b}_3(s) - \cP^\lambda_1(s)\right),
\end{align*}
the inequality using also that $\smash{Z^{\lambda, b}_2(x)\geq Z^{\lambda, b}_3(x)}$ for all $x$. Thus, we see that $\smash{Z^{\lambda, b}_1(\tau)}= \smash{Z^{\lambda, b}_2(\tau)} = \smash{Z^{\lambda, b}_3(\tau)}$ implies that
\begin{equation}\label{e.tau membership}
\tau \in \rec_{\varepsilon/2}(Z^{\lambda, b}_3 - \cP^\lambda_1) \cap \rec_{\varepsilon/2}(Z^{\lambda, b}_3 - \cP^\lambda_2)\cap[\varepsilon,T].
\end{equation}
We claim that the set on the righthand side is almost surely empty.

First, recall from Proposition~\ref{p.Br abs cont of Z} that $(\cP^\lambda_1, \cP^\lambda_2, Z^{\lambda, b}_3)|_{[\varepsilon/2,T]} \ll (B_1,B_2,B_3)_{[\varepsilon/2,T]}$, where the $B_i$ are independent rate two Brownian motions. 
%
%
That $(\smash{\cP^\lambda_1, \cP^\lambda_2, Z^{\lambda, b}_3})|_{[\varepsilon/2,T]} \ll (B_1,B_2,B_3)_{[\varepsilon/2,T]}$ implies that $(\smash{Z^{\lambda, b}_3 - \cP^\lambda_1, Z^{\lambda, b}_3 - \cP_2^\lambda})|_{[\varepsilon/2,T]}$ is absolutely continuous to the restriction to $[\varepsilon/2,T]$ of a pair of (non-trivially) correlated Brownian motions, which is in turn absolutely continuous to $(B,B')|_{[\varepsilon/2,T]}$, where $B$ and $B'$ are a pair of independent Brownian motions on $[0,\infty)$.

Since the set on the righthand side of \eqref{e.tau membership} is a function of $(Z^{\lambda, b}_3 - \cP_1^\lambda)|_{[\varepsilon/2,T]}$ and $(Z^{\lambda, b}_3 - \cP_2^\lambda)|_{[\varepsilon/2,T]}$, to prove that that set is empty almost surely, it is sufficient to show that, almost surely,
$$\rec_{\varepsilon/2}(B)\cap\rec_{\varepsilon/2}(B')\cap[\varepsilon,T] = \emptyset.$$

Now by the Markov property of Brownian motion, and since $\rec_{\varepsilon/2}(X)$ is unaffected by vertical shifts to $X$, $\rec_{\varepsilon/2}(B)\cap \rec_{\varepsilon/2}(B')$ has the same distribution as $\varepsilon/2 + \rec_0(B)\cap \rec_0(B')$, where $x+A$ for $x\in\R$ and a set $A$ is the set $\{x+y: y\in A\}$.

By L\'evy's identity (Proposition~\ref{p.levy identity}), $(\varepsilon/2+\rec_0(B)\cap\rec_0(B'))\cap[\varepsilon, \infty) = \emptyset$ almost surely is equivalent to two independent Brownian motions almost surely not sharing a zero after time $\varepsilon/2$. This is because, if $M$ is the running maximum of $B$, then $\rec_0(B)$ is the set of points where $M = B$, i.e., $M-B=0$; by L\'evy's identity, these have the distribution of the set of points where $|B|=0$. That independent Brownian motions almost surely do not share a zero after time $\varepsilon/2$ is implied by Lemma~\ref{l.2d BM zero}, thus completing the proof of Proposition~\ref{p.Z_1 Z_3 never coincide}.
\end{proof}

\subsection{Finite maximizing index for \texorpdfstring{$Z^{\lambda,b}_i$}{Z\string^\{lambda,b\}\_i}}
\label{s.Z^lambda index finite}


Proposition~\ref{p.maximizer index is finite for general i} is obtained by a somewhat delicate geometric argument, using the $i=1$ case proved in \cite{sarkar2020brownian} and recorded in this article as Lemma~\ref{l.Airy sheet to boundary LPP problem}.
{While the proof strategy as adopted in \cite{sarkar2020brownian} involving infinite geodesics and the fact that they follow roughly parabolic trajectories provides an approach to proving Proposition~\ref{p.maximizer index is finite for general i},  here we choose to adopt a different strategy, in keeping with our reliance on arguments involving boundary data instead of infinite geodesics.}

In the coming pages, we will refer several times to the geodesic implicit in the definition \eqref{e.Z_i^lambda defn} of $\smash{Z^{\lambda,b}_1(z)}$. This means, if the minimum index at which the supremum in that definition is attained is $i_0$ (which is known to be finite by Lemma~\ref{l.Airy sheet to boundary LPP problem}), to consider the geodesic from $(\lambda, i_0)$ to $(\lambda+z,1)$ in the environment defined by $\cP$. {In the case that this geodesic is not unique, we will consider the left-most one; it is a standard consequence of planarity that this is a well-defined notion, and we refer the reader, for example, to \cite[Lemma~3.5]{dauvergne2018directed}.}

The basic idea of Proposition~\ref{p.maximizer index is finite for general i} is geometric and uses the ordering or monotonicity properties of geodesics. Namely, consider the geodesic $\pi^z$ implicitly defined by $Z^{\lambda,b}_1(z)$. If for some $z\geq x$ this geodesic leaves line $j$ after $\lambda+x$, then the maximizing index for $\smash{Z^{\lambda,b}_j(x)}$ must be at most the starting line number of $\pi^z$, by geodesic ordering, which we know is finite from Lemma~\ref{l.Airy sheet to boundary LPP problem} (Lemma~\ref{l.upper bound on i^th line starting index}). See Figure~\ref{f.Z_i maximizer argument}. The proof of Proposition~\ref{p.maximizer index is finite for general i} comes down to showing that the location at which $\pi^z$ leaves line $j$ goes to $\infty$ as $z\to\infty$, thus covering all values of $x$ for $\smash{Z^{\lambda,b}_j(x)}$. This is shown by arguing that, otherwise, the geodesic from vertical line $\lambda+x$ (with boundary data $\smash{\{b_i^{\lambda+x}\}_{i\in\N}}$) to $(\lambda+z,1)$ would have uniformly bounded starting line number for all $z$ (Lemma~\ref{l.upper bound on i^lambda in bad case}); that this is not possible is Lemma~\ref{l.i limit is infinite}.

\begin{figure}
\includegraphics{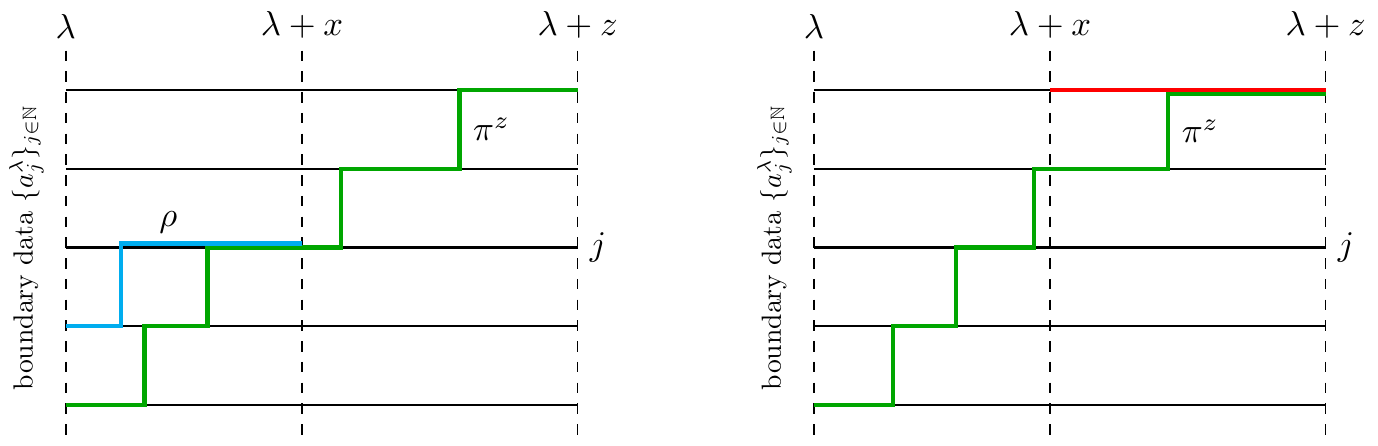}
\caption{On the left panel is depicted the situation where the geodesic $\pi^z$ (in green) from the vertical line $\lambda$ with boundary data $\{b^\lambda_i\}_{i\in\N}$ to $(\lambda+z,1)$ leaves line $j$ after $\lambda+x$. In this case, planarity and the weight maximization property of geodesics implies that the geodesic $\rho$ (in light blue) from the vertical line $\lambda$, again with boundary data $\{b^\lambda_i\}_{i\in\N}$, to $(\lambda+x,j)$ is to the left of $\pi^z$; in particular, $\rho$ starts at a line whose index is lower than that of $\pi^z$. (In fact in the depiction, since $\pi^z$ is at line $j$ at $\lambda+x$, the portion of $\pi^z$ till $\lambda+x$ must coincide with $\rho$. We have not shown this as it need not hold in general when $\pi^z$ is at an index greater than $j$ at $\lambda+x$.) In the right panel, $\pi^z$ leaves line $j$ before $\lambda+x$; this implies that the red geodesic from $\lambda+x$, with boundary data $\{b^{\lambda+x}_i\}_{i\in\N}$, to $(\lambda+z,1)$ starts at a line above the $j$\textsuperscript{th} one.}\label{f.Z_i maximizer argument}
\end{figure}

\begin{lemma}\label{l.upper bound on i^th line starting index}
Fix $\lambda\in \R$ and $0\leq x\leq z$, and let $\pi^z$ be the geodesic implicit in the definition \eqref{e.Z_i^lambda defn} of $\smash{Z^{\lambda,b}_1(z)}$. Fix $j\in\N$ and suppose that $\pi^z$ exits line $j$ after $\lambda+x$. Then the maximizing index (the minimum such in the case of non-uniqueness) in the definition of $\smash{Z^{\lambda,b}_j(x)}$ is at most $i^\lambda(z)$.
\end{lemma}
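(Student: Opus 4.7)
My plan is to exploit the fact that the geodesic $\pi^z$ passes through the point $(\lambda+x, j)$---it must, since it starts on line $i^* := i^\lambda(z)$, ends on line $1$, has non-increasing line index as its horizontal coordinate grows, and exits line $j$ only after $\lambda+x$. This enables a direct comparison between the suprema defining $Z^{\lambda,b}_1(z)$ and $Z^{\lambda,b}_j(x)$ by splitting the weight of $\pi^z$ at the point $(\lambda+x, j)$.

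First I would observe that $i^* \geq j$. Using the optimality of $\pi^z$ together with the reverse inequality obtained from concatenating a geodesic from $(\lambda, i^*)$ to $(\lambda+x, j)$ with one from $(\lambda+x, j)$ to $(\lambda+z, 1)$, I would derive the decomposition
\begin{equation*}
\cP[(\lambda, i^*) \to (\lambda+z, 1)] = \cP[(\lambda, i^*) \to (\lambda+x, j)] + \cP[(\lambda+x, j) \to (\lambda+z, 1)].
\end{equation*}

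Next I would establish a complementary inequality by reversing the roles of the two problems. For each $i \geq j$, concatenating geodesics yields $\cP[(\lambda, i) \to (\lambda+x, j)] + \cP[(\lambda+x, j) \to (\lambda+z, 1)] \leq \cP[(\lambda, i) \to (\lambda+z, 1)]$; adding $b^\lambda_i$ and taking the supremum over $i\geq j$ gives
\begin{equation*}
Z^{\lambda,b}_j(x) + \cP[(\lambda+x, j) \to (\lambda+z, 1)] \leq Z^{\lambda,b}_1(z).
\end{equation*}
Invoking the decomposition above together with $Z^{\lambda,b}_1(z) = b^\lambda_{i^*} + \cP[(\lambda, i^*) \to (\lambda+z, 1)]$, this rearranges to $Z^{\lambda,b}_j(x) \leq b^\lambda_{i^*} + \cP[(\lambda, i^*) \to (\lambda+x, j)]$. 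The reverse inequality is immediate, as $i^* \geq j$ makes $i^*$ an admissible index in the supremum defining $Z^{\lambda,b}_j(x)$. Hence $i^*$ itself is a maximizing index for $Z^{\lambda,b}_j(x)$, and so the minimum such maximizer is at most $i^* = i^\lambda(z)$.

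I do not anticipate a substantive obstacle; the argument is a routine planarity and concatenation manipulation using standard properties of last passage values. The only mild subtlety is justifying the weight-splitting identity for $\pi^z$ at $(\lambda+x, j)$, which follows from the optimality of $\pi^z$ together with the existence of geodesics for the relevant finite sub-problems in $\cP$. No appeal is needed to the as-yet-unproven Proposition~\ref{p.maximizer index is finite for general i}; indeed, the argument above implicitly supplies an explicit finite maximizer (namely $i^*$) for $Z^{\lambda,b}_j(x)$ in this setting.
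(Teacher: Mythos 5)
Your argument rests on the assertion that $\pi^z$ passes through $(\lambda+x, j)$, and that assertion does not follow from the hypothesis. The hypothesis that $\pi^z$ ``exits line $j$ after $\lambda+x$'' means only that the line index of $\pi^z$ at horizontal coordinate $\lambda+x$ is \emph{at least} $j$; the geodesic could be on line $k > j$ at $\lambda+x$, entering line $j$ strictly later. The paper explicitly flags this possibility in the caption of Figure~\ref{f.Z_i maximizer argument}: ``We have not shown this as it need not hold in general when $\pi^z$ is at an index greater than $j$ at $\lambda+x$.'' When $\pi^z$ is at line $k>j$ at $\lambda+x$, the decomposition
\begin{equation*}
\cP[(\lambda, i^*) \to (\lambda+z, 1)] = \cP[(\lambda, i^*) \to (\lambda+x, j)] + \cP[(\lambda+x, j) \to (\lambda+z, 1)]
\end{equation*}
can fail (the geodesic passes through $(\lambda+x, k)$, not $(\lambda+x, j)$), so the cancellation that drives your rearrangement is not available.

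The issue is not merely technical; the strengthened conclusion you draw, that $i^*$ is itself a maximizing index for $Z^{\lambda,b}_j(x)$, is generally false in precisely the case $k>j$. Consider the degenerate case $x=0$: then $Z^{\lambda,b}_j(0)=\sup_{i\geq j} b^\lambda_i = b^\lambda_j$ (since $b^\lambda_i$ is non-increasing in $i$), so the minimum maximizer is $j$, whereas $i^*$ can be strictly larger with $b^\lambda_{i^*} < b^\lambda_j$, and $i^*$ is then \emph{not} a maximizer. The paper's proof avoids this by arguing by contradiction: assume the minimum maximizer $\ell$ exceeds $i^\lambda(z)$, let $\rho$ be a geodesic from $(\lambda,\ell)$ to $(\lambda+x,j)$, use planarity to conclude $\rho$ and $\pi^z$ must intersect at \emph{some} point $y$ (not necessarily $(\lambda+x,j)$), and then perform a path-swap at $y$ to show $i^\lambda(z)$ achieves the supremum for $Z^{\lambda,b}_j(x)$ with weight at least that of $\ell$, contradicting the minimality of $\ell$. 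Your approach could be patched along similar lines, but as written there is a genuine gap.
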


\begin{proof}
Let $\ell$ be the minimum index achieving the supremum in the definition \eqref{e.Z_i^lambda defn} of $Z^{\lambda,b}_j(x)$. Since $z$ is fixed, let $\pi=\pi^z$. Suppose to the contrary that $\pi$ leaves line $j$ after $\lambda+x$, and $\smash{\ell > i^\lambda(z)}$. 

Consider the geodesic from $(\lambda, \ell)$ to $(\lambda+x, j)$ and call it $\rho$. Note that our hypothesis that $\ell> i^\lambda(z)$ and planarity imply that $\rho$ and $\pi$ must have non-empty intersection, though it may possibly be just $(\lambda+x, j)$. Let $y\in \R\times\N$ be the point of intersection with minimum first coordinate. Consider the restriction of $\pi$ from its starting point to $y$, which we label $\pi'$, and we define $\rho'$ from $\rho$ similarly. Recall from Definition~\ref{d.lpp} that the weight of a path $\gamma$ in the environment given by $\cP$ is denoted by $\cP[\gamma]$. Note that
\begin{equation}\label{e.pi' rho' inequality}
\cP[\pi'] + b^\lambda_{i^\lambda(z)} \geq \cP[\rho'] + b_\ell^\lambda,
\end{equation}
for otherwise we could replace $\pi'$, as portion of the path $\pi$, by $\rho'$ and obtain a path from the vertical line $\lambda$ to $(\lambda+z,1)$ of greater weight (taking into account the boundary data at $\lambda$).

However, \eqref{e.pi' rho' inequality} contradicts the definition of $k$ as the minimum index achieving a certain maximum. This is because we can consider the path $\rho''$ obtained by replacing $\rho'$, as a portion of the path $\rho$, by $\pi'$; this path goes from $(\lambda, i^\lambda(z))$ to $(\lambda+x, j)$ and, by \eqref{e.pi' rho' inequality}, satisfies
$$\cP[\rho''] + b^\lambda_{i^\lambda(z)} \geq \cP[\rho] + b^\lambda_\ell;$$
but then $\ell$ is not the minimum index achieving the supremum in the definition of $Z^{\lambda, b}_j(x)$ since $\ell > i^\lambda(z)$, whence the contradiction. Thus the proof of Lemma~\ref{l.upper bound on i^th line starting index} is complete.
\end{proof}

\begin{lemma}\label{l.upper bound on i^lambda in bad case}
Fix $\lambda\in\R$ and $0\leq x\leq z$, and let $\pi^z$ be the geodesic implicit in the definition \eqref{e.Z_i^lambda defn} of $Z^{\lambda, b}_1(z)$. Let $j$ be the index of the line that $\pi^z$ is on at $\lambda+x$. Then $j\geq i^{\lambda+x}(z-x)$.
\end{lemma}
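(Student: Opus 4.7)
The plan is a proof by contradiction: assuming $\ell := i^{\lambda+x}(z-x) > j$, I will derive the contradictory inequalities $Z^{\lambda, b}_j(x) > b^{\lambda+x}_j$ and $Z^{\lambda, b}_j(x) \le b^{\lambda+x}_j$. Heuristically, both quantities encode the ``boundary data on line $j$ at horizontal coordinate $\lambda+x$'' coming from the source $y_b$, and so should in fact be equal; the assumption $\ell > j$ will force them to be strictly ordered in one direction, while the LPP structure forces the opposite.

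First I would decompose $\pi^z$ at $\lambda+x$. Since its restriction to $[\lambda, \lambda+x]$ must be a geodesic from $(\lambda, i^\lambda(z))$ to $(\lambda+x, j)$ (with $i^\lambda(z) \ge j$ because up-right paths have non-increasing line index),
\begin{equation*}
\S(y_b, \lambda+z) = b^\lambda_{i^\lambda(z)} + \cP[(\lambda, i^\lambda(z))\to(\lambda+x, j)] + \cP[(\lambda+x, j)\to(\lambda+z, 1)].
\end{equation*}
The first two terms are together at most $Z^{\lambda, b}_j(x)$ by the definition~\eqref{e.Z_i^lambda defn}, while Lemma~\ref{l.S Z^lambda representation} supplies the reverse inequality; equality therefore holds with $Z^{\lambda, b}_j(x)$ in place of those terms. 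Applying Lemma~\ref{l.Airy sheet to boundary LPP problem} at the shift $\lambda+x$ gives $\S(y_b, \lambda+z) = \sup_i\{b^{\lambda+x}_i + \cP[(\lambda+x, i)\to(\lambda+z, 1)]\}$; since $\ell > j$ is the \emph{minimum} maximizer, the corresponding inequality at $i = j$ is strict. Comparing these two expressions for $\S(y_b, \lambda+z)$ yields $Z^{\lambda, b}_j(x) > b^{\lambda+x}_j$.

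For the reverse inequality, let $i_0 \ge j$ achieve the supremum defining $Z^{\lambda, b}_j(x)$. Concatenating up-right paths through $(\lambda, i_0)$, for all $k\ge i_0$,
\begin{equation*}
\cP[(y_b)_k\to(\lambda+x, j)] \ge \cP[(y_b)_k\to(\lambda, i_0)] + \cP[(\lambda, i_0)\to(\lambda+x, j)].
\end{equation*}
Subtracting $\cP[(y_b)_k\to(\lambda+x, 1)]$, adding $\S(y_b, \lambda+x)$, and letting $k\to\infty$, the left side tends to $b^{\lambda+x}_j$ by~\eqref{e.a b definition}. I would rearrange the right side via $\pm\cP[(y_b)_k\to(\lambda, 1)]$: the difference $\cP[(y_b)_k\to(\lambda, i_0)] - \cP[(y_b)_k\to(\lambda, 1)]$ tends to $b^\lambda_{i_0} - \S(y_b, \lambda)$ by~\eqref{e.a b definition}, while $\cP[(y_b)_k\to(\lambda, 1)] - \cP[(y_b)_k\to(\lambda+x, 1)]$ equals $\S(y_b, \lambda) - \S(y_b, \lambda+x)$ for $k$ large by Definition~\ref{d.airy sheet}(ii). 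Combining yields $b^{\lambda+x}_j \ge b^\lambda_{i_0} + \cP[(\lambda, i_0)\to(\lambda+x, j)] = Z^{\lambda, b}_j(x)$, the desired contradiction.

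The main obstacle is the use of Definition~\ref{d.airy sheet}(ii) at real rather than rational arguments $\lambda$ and $\lambda+x$, since the random threshold $K_{x,y,z}$ there is only guaranteed to exist for rational triples. This can be handled by approximating by rationals and invoking continuity of $\S$ together with continuity in the ending point of the finite-$k$ LPP values in $\cP$; alternatively, as noted in the discussion after~\eqref{e.a b definition}, the arguments of \cite{sarkar2020brownian} extend verbatim from $\lambda = 0$ to general real $\lambda$, providing exactly what the stationarity-of-increments step requires.
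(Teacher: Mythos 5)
Your argument is essentially sound, but it takes a slightly different (and slightly more roundabout) route than the paper, and it contains one easily-fixed circular step.

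The paper's proof is direct: it decomposes $\pi^z$ at $(\lambda+x,j)$, then proves the one inequality
\[
b^{\lambda+x}_j \;\geq\; b^\lambda_{i^\lambda(z)} + \cP[(\lambda,i^\lambda(z))\to(\lambda+x,j)]
\]
by precisely the limit-and-rearrange computation you give in your second step. Adding $\cP[(\lambda+x,j)\to(\lambda+z,1)]$ to both sides and using that $b^\lambda_{i^\lambda(z)} + \cP[(\lambda,i^\lambda(z))\to(\lambda+z,1)] = \S(y_b,\lambda+z)$, one sees that the index $j$ already achieves the supremum in Lemma~\ref{l.Airy sheet to boundary LPP problem} at $\lambda+x$, whence $i^{\lambda+x}(z-x)\leq j$ by minimality. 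No appeal to $Z^{\lambda,b}_j$ or to Lemma~\ref{l.S Z^lambda representation} is needed. Your proof by contradiction instead routes both sides of the argument through $Z^{\lambda,b}_j(x)$: the first half uses the decomposition of $\pi^z$ together with Lemma~\ref{l.S Z^lambda representation} and strictness from $\ell > j$ to get $Z^{\lambda,b}_j(x) > b^{\lambda+x}_j$, and the second half is the same limit computation to get $b^{\lambda+x}_j \geq Z^{\lambda,b}_j(x)$. Both are correct, but the direct version is shorter and avoids the need for an extra lemma.

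One genuine gap: in the second half you write ``let $i_0\ge j$ achieve the supremum defining $Z^{\lambda,b}_j(x)$.'' That the supremum is attained at a finite index is exactly what Proposition~\ref{p.maximizer index is finite for general i} asserts, and that proposition is proved \emph{using} the present lemma, so this would be circular as stated. Fortunately the fix is trivial: prove the inequality $b^{\lambda+x}_j \geq b^\lambda_i + \cP[(\lambda,i)\to(\lambda+x,j)]$ for every $i\geq j$ (the computation is unchanged), and then take the supremum over $i$ on the right; no claim about attainment is needed. Your remark about Definition~\ref{d.airy sheet}(ii) only being stated at rational arguments is a real point of care, but it is shared by the paper's own proof, and, as you note, it is resolved by the extension of the \cite{sarkar2020brownian} construction to general fixed $\lambda$ together with continuity of $\S$ and of the finite-$k$ LPP values.
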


\begin{proof}
For ease of notation, let $\pi^z = \pi$, $i^{\lambda+x} = i^{\lambda+x}(z-x)$, and $i^\lambda = i^\lambda(z)$. Recall from Lemma~\ref{l.Airy sheet to boundary LPP problem} that $i^{\lambda+x}$ and $i^\lambda$ are respectively the minimum indices which achieve the supremums in
\begin{equation}\label{e.i^lambda defn}
\begin{split}
\S(y_b, \lambda+z) &= \sup_{i\in\N}\left\{b^{\lambda+x}_i + \cP[(\lambda+x,i) \to (\lambda+z,1)]\right\}\\
&= \sup_{i\in\N}\left\{ b^\lambda_i + \cP[(\lambda, i)\to (\lambda+z,1)]\right\}.\end{split}
\end{equation}
We \emph{claim} that
\begin{equation}\label{e.claim on shifted a relationship}
b^{\lambda+x}_j + \cP[(\lambda+x,j)\to (\lambda+z,1)] \geq b^\lambda_{i^\lambda} + \cP[(\lambda, i^\lambda)\to (\lambda+z,1)];
\end{equation}
though we will not need this, we note that combining this inequality with \eqref{e.i^lambda defn} implies that we actually have equality.

We first prove Lemma~\ref{l.upper bound on i^lambda in bad case} given this claim. By the definition of $i^\lambda$ and since both supremums in \eqref{e.i^lambda defn} equal $\S(y_b,\lambda+z)$, \eqref{e.claim on shifted a relationship} implies that the first supremum in \eqref{e.i^lambda defn} is achieved at $i=j$. Then by the minimality in the definition of $i^{\lambda+x}$, it follows that $i^{\lambda+x} \leq j$, which is the claim of Lemma~\ref{l.upper bound on i^lambda in bad case}.

It remains to prove \eqref{e.claim on shifted a relationship}. Since $j$ is the index of the line $\pi$ (the geodesic from $(\lambda, i^\lambda)$ to $(\lambda+z,1)$) is on at $\lambda+x$, we see that 
$$\cP[(\lambda, i^\lambda)\to (\lambda+z,1)] = \cP[(\lambda, i^\lambda) \to (\lambda+x,j)] + \cP[(\lambda+x,j) \to (\lambda+z,1)].$$
Thus verifying \eqref{e.claim on shifted a relationship} reduces to showing
$$b_j^{\lambda+x} \geq b^\lambda_{i^\lambda} + \cP[(\lambda,i^\lambda)\to (\lambda+x, j)].$$
To show this, observe that, by the definition \eqref{e.a b definition} of $b^\lambda_i$ for $i\in\N$ and since $\cP[x\to z] \geq \cP[x\to y] + \cP[y\to z]$ for any coordinates $x,y,z\in\R\times\N$,
\begin{align*}
\MoveEqLeft
b^\lambda_{i^\lambda} + \cP[(\lambda, i^\lambda)\to(\lambda+x,j)]\\
&= \lim_{k\to\infty} \left(\cP[(y_b)_k\to (\lambda, i^\lambda)] + \cP[(\lambda, i^\lambda)\to(\lambda+x,j)] - \cP[(y_b)_k \to (\lambda, 1)] + \S(y_b, \lambda)\right)\\
&\leq \lim_{k\to\infty} \Bigl(\cP[(y_b)_k\to (\lambda+x,j)] - \cP[(y_b)_k \to (\lambda, 1)] + \S(y_b, \lambda)\Bigr)\\
&= \lim_{k\to\infty} \Bigl(\cP[(y_b)_k\to (\lambda+x,j)] - \cP[(y_b)_k\to(\lambda+x,1)] + \S(y_b, \lambda+x)\Bigr)\\
&\qquad+ \lim_{k\to\infty}\Bigl(\cP[(y_b)_k\to(\lambda+x,1)] - \cP[(y_b)_k \to (\lambda, 1)] + \S(y_b, \lambda) - \S(y_b,\lambda+x)\Bigr).
\end{align*}
Now, the first limit is $b^{\lambda+x}_{j}$ by definition, while the second is zero by item (ii) of Definition~\ref{d.airy sheet}. This completes the verification of \eqref{e.claim on shifted a relationship} and thus the proof of Lemma~\ref{l.upper bound on i^lambda in bad case}.
\end{proof}

The next lemma says that the geodesic from the vertical line at $\lambda$, with boundary data $\{b^\lambda_i\}_{i\in\N}$, to $(\lambda+z,1)$ must have starting line index go to $\infty$ as $z\to\infty$. Recall the general notation introduced in Section~\ref{s.geometry}: for $y>0$, $\lambda\in\R$, and $z\geq \lambda$, $i^\lambda_y(z)$ is be the starting line index of the geodesic from vertical line at $\lambda$ to $(\lambda+z,1)$, where the boundary data at $\lambda$ is given by $\smash{\{b^{\lambda,y}_i\}_{i\in\N}}$. Lemma~\ref{l.i j properties} says that, if $y < y'$, then $i^\lambda_y(z)\leq i^\lambda_{y'}(z)$ for all $z\geq 0$, and we will make use of this.

\begin{lemma}\label{l.i limit is infinite}
Fix $y> 0$ and $\lambda\in\R$. Then, almost surely, $\lim_{z\to\infty} i^\lambda_{y}(z) = \infty$.
\end{lemma}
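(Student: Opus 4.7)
My plan is to argue by contradiction, using the monotonicity of $z \mapsto i^\lambda_y(z)$ established in Lemma~\ref{l.i j properties}(ii). Since $i^\lambda_y$ is non-decreasing, it will suffice to show that for each fixed $i_0 \in \N$ the event $E_{i_0} := \{i^\lambda_y(z) \leq i_0 \text{ for all } z \geq 0\}$ has probability zero; a union bound over $i_0 \in \N$ then yields $\lim_{z\to\infty} i^\lambda_y(z) = \infty$ almost surely.

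On $E_{i_0}$, Lemma~\ref{l.Airy sheet to boundary LPP problem} will give $\S(y,\lambda+z) = \max_{1 \leq i \leq i_0}\{b^{\lambda,y}_i + \cP[(\lambda,i) \to (\lambda+z,1)]\}$. Since $\cP[(\lambda,i)\to(\lambda+z,1)]$ is non-decreasing in $i$ (any path from $(\lambda,i)$ can be viewed as starting at $(\lambda,i')$ for $i' \geq i$ by inserting zero-weight vertical jumps at $\lambda$), this will yield
$$\S(y,\lambda+z) \leq M + \cP[(\lambda,i_0) \to (\lambda+z,1)],$$
where $M := \max_{1 \leq i \leq i_0} b^{\lambda,y}_i$ is almost surely finite.

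The key step will be to bound $\cP[(\lambda,i_0)\to(\lambda+z,1)] - \cP_1(\lambda+z)$ by a random constant independent of $z$. To this end I will pass to the (non-intersecting, ordered) Airy line ensemble $\A_j(x) := \cP_j(x)+x^2$ and telescope: any up-right path with jumps $\lambda = t_{i_0+1}\leq\ldots\leq t_1 = \lambda+z$ has $\A$-weight equal to $\A_1(\lambda+z) - \A_{i_0}(\lambda) + \sum_{k=2}^{i_0}(\A_k(t_k)-\A_{k-1}(t_k))$, and the last sum is non-positive by non-intersection. Taking the supremum over paths and subtracting the parabolic shift $(\lambda+z)^2 - \lambda^2$ then gives
$$\cP[(\lambda,i_0)\to(\lambda+z,1)] - \cP_1(\lambda+z) \leq \lambda^2 - \A_{i_0}(\lambda),$$
an almost surely finite random constant.

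Combining these, on $E_{i_0}$ I will obtain $\S(y,\lambda+z) - \S(0,\lambda+z) \leq M + \lambda^2 - \A_{i_0}(\lambda)$, a finite random variable independent of $z$. This will contradict the almost sure fact that $z \mapsto \S(y,\lambda+z) - \S(0,\lambda+z)$ is non-decreasing (the analogue of Lemma~\ref{l.W increasing} applied to the source pair $(0,y)$) and tends to $+\infty$ (the analogue of Lemma~\ref{l.strong non-degeneracy of E}, whose proof uses only the stationarity of the parabola-compensated sheet $\S^\cup$ in distribution together with $y > 0$, so goes through verbatim for this pair). The main technical task will therefore just be verifying that these two analogues extend to an arbitrary positive source $y$, which is routine since the relevant parabolic term in the proof of Lemma~\ref{l.strong non-degeneracy of E} becomes $2yz$ and dominates the fluctuations.
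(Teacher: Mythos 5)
Your proof is correct, and it takes a genuinely different route from the paper's. The paper argues by contradiction via a pigeonhole on the \emph{sources}: assuming $i^\lambda_y(z)\leq K$ for all $z$, it applies the source-monotonicity $i^\lambda_{y'}(z)\leq i^\lambda_y(z)$ for $y'<y$ (Lemma~\ref{l.i j properties}, which rests on the crossing Lemma~\ref{l.crossing}) to find two positive rationals $y_1<y_2$ with $i^\lambda_{y_1}(z)=i^\lambda_{y_2}(z)$ for all large $z$, forcing $\S(y_2,\lambda+z)-\S(y_1,\lambda+z)$ to be eventually constant and contradicting the analogue of Lemma~\ref{l.strong non-degeneracy of E} for the rational pair. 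You instead bound $\S(y,\lambda+z)-\cP_1(\lambda+z)$ \emph{directly} on the bad event, by first reducing the supremum in Lemma~\ref{l.Airy sheet to boundary LPP problem} to starting indices $\leq i_0$ and then exploiting the ordering $\cP_1>\cP_2>\dots$ to make the telescoping weight bound $\cP[(\lambda,i_0)\to(\lambda+z,1)]\leq \cP_1(\lambda+z)-\cP_{i_0}(\lambda)$ (your detour through $\A$ is not needed; the telescoping works identically on $\cP$ since the ordering is preserved). This gives a $z$-uniform bound on $\S(y,\lambda+z)-\S(0,\lambda+z)$ and the same contradiction with (the verbatim-extending) Lemma~\ref{l.strong non-degeneracy of E}. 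Your route is arguably a bit leaner since it invokes neither the source-comparison Lemma~\ref{l.i j properties}(i) nor the pigeonhole over a continuum of sources, trading these for a short deterministic LPP estimate that follows from non-intersection alone; both proofs bottom out at the same non-degeneracy input.
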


\begin{proof}
Suppose not. Then with positive probability, there exists $K$ such that
$$i^\lambda_{y}(z) \leq K$$
for all $z\geq 0$. By Lemma~\ref{l.i j properties}, we have that $i^\lambda_{y'}(z) \leq i^\lambda_{y}(z)$ for all $z\geq 0$ and $0< y' < y$. Thus, on the same event, by the pigeonhole principle, there exist positive rationals $y_1 < y_2$ such that, for all large~$z$,
$i^\lambda_{y_1}(z) = i^\lambda_{y_2}(z).$

But then, by Lemma~\ref{l.Airy sheet to boundary LPP problem}, we see that for all such large $z$,
$$\S(y_2, \lambda+z) - \S(y_1, \lambda+z) = c,$$
where $c$ is a (random) finite constant. This contradicts Lemma~\ref{l.strong non-degeneracy of E}, which, by a union bound over positive rational starting points, implies that $\lim_{z\to\infty} (\S(y_2,\lambda+ z) - \S(y_1, \lambda+z)) = \infty$ almost surely.
\end{proof}

We may now give the proof of Proposition~\ref{p.maximizer index is finite for general i}. It may be useful to refer to Figure~\ref{f.Z_i maximizer argument} once again.

\begin{proof}[Proof of Proposition~\ref{p.maximizer index is finite for general i}]
The case of $j=1$ is asserted by Lemma~\ref{l.Airy sheet to boundary LPP problem} (which is being cited from \cite{sarkar2020brownian}), and we will rely on this input to prove the general case.

Let us denote the smallest index achieving the maximum for $Z_j^{\lambda, b}(x)$ by $\ell^\lambda_j(x)$, which is infinite in the case that the maximum is not achieved at any finite index. By a planarity argument as in Lemma~\ref{l.i j properties} (which asserts the following for $j=1$), we see that $\ell^\lambda_j(x)$ is non-decreasing in $x$. Thus it is sufficient to prove that, almost surely, $\smash{\ell^\lambda_j(x)}$ is finite for each $x\in\N$. Fix an $x\in\N$ now.

By Lemma~\ref{l.upper bound on i^th line starting index}, we see that $\ell^\lambda_j(x) \leq i^\lambda(z)$ whenever $z$ is such that the geodesic $\pi^z$ implicit in the definition \eqref{e.Z_i^lambda defn} of $\smash{Z^{\lambda, b}_1(z)}$ exits line $j$ after $\lambda+x$. Since $\smash{i^\lambda(z) < \infty}$ for all $z$ by the $j=1$ case, we are done if we can show that, almost surely, for all large enough $z$, $\pi^z$ exits line $j$ after $\lambda+x$.

Suppose not. Then on a positive probability event the index of the line that $\pi^z$ is on at $\lambda+x$ is at most $j-1$ for all $z\geq x$. Then on the same event, by Lemma~\ref{l.upper bound on i^lambda in bad case}, for all $z\geq x$, $i^{\lambda+x}(z-x) \leq j-1$. This contradicts Lemma~\ref{l.i limit is infinite}, which asserts that, with probability one, $i^{\lambda+x}(z-x)\to \infty$ as $z\to\infty$. This completes the proof of Proposition~\ref{p.maximizer index is finite for general i}.
%
\end{proof}

\bibliographystyle{alpha}
\bibliography{weight-diff-local-time}
\end{document}